\documentclass[a4paper,reqno]{amsart} 

\usepackage{amssymb}
\usepackage[mathcal]{euscript} 

%\usepackage[cmtip,all]{xy}
%\usepackage{color}

%\usepackage{tikz-cd} 

%\include{mymacros}

%abbreviations

\newcommand{\bydef}{:=}

%Hermitian (symmetric)
%skew Hermitian (skew symmetric)
%symmetric group
\newcommand{\id}{\mathrm{id}}%identity map
%projection map
%exchange map

%allows placement of
%subscript below in displaymath 
\newcommand{\tr}{\mathrm{tr}}

 %rank

\newcommand{\bi}{\mathbf{i}}%imaginary unit

%maximal ideal

%algebras
 
\newcommand{\cC}{\mathcal{C}}

\newcommand{\cQ}{\mathcal{Q}}

%Lie algebras

%Cayley-Dickson

%standard rings and fields

\newcommand{\ZZ}{\mathbb{Z}}

\newcommand{\FF}{\mathbb{F}} 
\newcommand{\KK}{\mathbb{K}}
%algebraic closure
\newcommand{\chr}[1]{\mathrm{char}\,#1}

%Categories, morphisms, derivations and gradings

%category of (co)modules
%category of sets
%category of groups
%category of
                 %comm. assoc. unital algebras over F
%nilradical
\DeclareMathOperator{\Hom}{\mathrm{Hom}}
%graded space of
     %homomorphisms (subspace of Hom)
\DeclareMathOperator{\End}{\mathrm{End}}
%graded space of
                   %endomorphisms (subspace of End)

%\newcommand{\ker}{\mathrm{ker}\,}
\DeclareMathOperator{\Aut}{\mathrm{Aut}}%automorphism group
%inner automorphisms

  %group of automorphisms and antiautomorphisms

 %automorphisms of G-graded algebra:  labels preserved 
 %use \Aut(\Gamma) if labels are not preserved
\DeclareMathOperator{\Stab}{\mathrm{Stab}}%stabilizer

    %diagonal automorphisms (scalar on each component)
\DeclareMathOperator{\AAut}{\mathbf{Aut}}
   %automorphism group scheme 
\DeclareMathOperator{\Der}{\mathrm{Der}}
%inner derivations
\DeclareMathOperator{\supp}{\mathrm{Supp}}

\DeclareMathOperator{\Cent}{\mathrm{Cent}}

\DeclareMathOperator{\orbit}{\mathrm{orbit}}
\DeclareMathOperator{\Sym}{\mathrm{Sym}}%symmetric group

%standard Lie groups and algebras

\newcommand{\Lie}{\mathrm{Lie}}%Lie functor

  %associative algebra as a Lie algebra 

    %associative algebra as a Jordan algebra 

\newcommand{\frso}{{\mathfrak{so}}}

%standard groups

\newcommand{\GL}{\mathrm{GL}}

\newcommand{\Ort}{\mathrm{O}}

\newcommand{\Spin}{\mathrm{Spin}}

\newcommand{\Aff}{\mathrm{Aff}}

%gradings of classical Lie algebras 

   %canonical form of grading on matrix algebra

%group schemes

\newcommand{\Cs}{\mathbf{Cent}}
 
\newcommand{\Gs}{\mathbf{G}}
\newcommand{\Gsm}{\Gs_{\textup{m}}}

\newcommand{\Us}{\mathbf{U}}
\newcommand{\GLs}{\mathbf{GL}}

\newcommand{\Os}{\mathbf{O}}

\newcommand{\Spins}{\mathbf{Spin}}

\newcommand{\Diags}{\mathbf{Diag}}
\newcommand{\bmu}{\boldsymbol{\mu}}
%quasitorus
%stabilizer
\newcommand{\subdiag}{_{\textup{diag}}}
%diagonalizable group scheme
\newcommand{\subcartan}{_{\textup{Cartan}}}
\newcommand{\subCD}{_{\textup{CD}}}

%Hopf algebras 

%distribution algebras of group schemes

%Verschiebung

   %sequence of divided powers (in a Hopf algebra)

%some lists

\newenvironment{romanenumerate} 
{\begin{enumerate}

}{\end{enumerate}}

%provisional remarks in the margin

%\DeclareMathOperator{\deg}{\mathrm{degree}} %degree

\newcommand{\subo}{_{\bar 0}} 
\newcommand{\subuno}{_{\bar 1}}
 %Albert algebra

%involution 

\newcommand{\Cl}{\mathfrak{Cl}} %Clifford algebra
 %twisted composition

 %Galois group
 %Brauer group

\newcommand{\bup}{\textup{b}} 
\newcommand{\nup}{\textup{n}}
\newcommand{\bnup}{\bup_\nup}
\newcommand{\hup}{\textup{h}}

  %diagonal group scheme 

%Multiplication algebra

 %Ext groups

 %for cohomology

\newcommand{\dc}{\textup{d}}

\newtheorem{theorem}{Theorem}[section]
\newtheorem{proposition}[theorem]{Proposition}
\newtheorem{lemma}[theorem]{Lemma}
\newtheorem{corollary}[theorem]{Corollary}

\theoremstyle{definition} 
\newtheorem{df}[theorem]{Definition}
\newtheorem{example}[theorem]{Example}

\theoremstyle{remark} \newtheorem{remark}[theorem]{Remark}
\numberwithin{equation}{section}

%  Show labels 
%\usepackage{showkeys}

\begin{document}

\title{Cross products, automorphisms, and gradings}

\author[A.~Daza-Garc{\'\i}a]{Alberto Daza-Garc{\'\i}a}
\address[A.\,D., A.\,E.]{Departamento de
Matem\'{a}ticas e Instituto Universitario de Matem\'aticas y
Aplicaciones, Universidad de Zaragoza, 50009 Zaragoza, Spain}
\email{albertodg@unizar.es, elduque@unizar.es} 
\thanks{The first two authors are supported by grant
MTM2017-83506-C2-1-P (AEI/FEDER, UE) and by grant E22\_17R (Gobierno de 
Arag\'on, Grupo de referencia ``\'Algebra
y Geometr{\'\i}a''), cofunded by Feder 2014-2020 ``Construyendo
Europa desde Arag\'on''.\\
The first author also acknowledges support by the F.P.I. 
grant PRE2018-087018}

\author[A.~Elduque]{Alberto Elduque} 
%\address{Departamento de
%Matem\'{a}ticas e Instituto Universitario de Matem\'aticas y
%Aplicaciones, Universidad de Zaragoza, 50009 Zaragoza, Spain}
%\email{elduque@unizar.es} 

\author[L.~Tang]{Liming Tang}%${}^\star$} %Add for submission
\address[L.\,T.]{School of Mathematical
Sciences, Harbin Normal
University, 150025 Harbin, China} %\curraddr{}
\email{limingtang@hrbnu.edu.cn} 
\thanks{The third author is supported by grants CSC, 11971134
 (NSF of China), A2017005 (NSF of HLJ Province) and  XKB2014003 
(NSF of HNU).\\
She would like to thank the ``Departamento de Matem\'aticas''  and the ``Instituto Universitario de Matem\'aticas y
Aplicaciones'' of the University of Zaragoza for their support and hospitality during her visit from October 2019 to July 2020.}%\\
%${}^\star$Corresponding author.} %Add for submission

\subjclass[2010]{Primary 17A30; Secondary 17A36; 15A69}

\keywords{Cross product; automorphism group scheme; grading.}

%\date{}

\begin{abstract}
The affine group schemes of automorphisms of the multilinear $r$-fold 
cross products  on finite-dimensional vectors spaces over fields of 
characteristic not two are determined. Gradings by abelian groups 
on these structures, that correspond to morphisms from diagonalizable 
group schemes into these group schemes of automorphisms, are 
completely classified, up to isomorphism.
\end{abstract}

\maketitle

\section{Introduction}\label{se:intro}

Eckmann \cite{Eckmann} defined a \emph{vector cross product} on
an $n$-dimensional real vector space $V$, endowed with a
(positive definite) inner product $\bup(u,v)$, to be a continuous
map 
\[ 
X:V^r\longrightarrow V\qquad (1\leq r\leq n) 
\]
satisfying the following axioms: 
\begin{align}
&\bup\bigl(X(v_1,\ldots,v_r),v_i\bigr)=0,\quad 1\leq i\leq
r,\label{eq:axioms1}\\
&\bup\bigl(X(v_1,\ldots,v_r),X(v_1,\ldots,v_r)\bigr)=\det\Bigl(\bup(v_i,v_j)\Bigr),\label{eq:axioms2}
\end{align}

There are very few possibilities.

\begin{theorem}[\cite{Eckmann,Whitehead}]\label{th:EW} 
A vector
cross product exists in precisely the following cases:
\begin{itemize} 
\item $n$ is even, $r=1$, 
\item $n\geq 3$,
$r=n-1$, \item $n=7$, $r=2$, 
\item $n=8$, $r=3$. 
\end{itemize}
\end{theorem}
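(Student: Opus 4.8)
The plan is to treat the two directions separately: to construct explicit cross products in the four listed cases, and then to reduce the non-existence of a cross product in every other case to a short list of topological facts. For the existence part: if $r=1$ and $n=2m$, the standard complex structure $J$ on $\RR^{2m}$ works, since $J$ is orthogonal and skew-symmetric (so $\bup(Jv,v)=0$) and for $r=1$ the right-hand side of \eqref{eq:axioms2} is just $\bup(v_1,v_1)=\bup(Jv_1,Jv_1)$. If $r=n-1$, define $X(v_1,\ldots,v_{n-1})$ by $\bup\bigl(X(v_1,\ldots,v_{n-1}),w\bigr)=\det(v_1,\ldots,v_{n-1},w)$ for all $w$ (the determinant of the matrix with these columns): axiom \eqref{eq:axioms1} is the vanishing of a determinant with a repeated column, and \eqref{eq:axioms2} follows from $\det(a_1,\ldots,a_n)^2=\det\bigl(\bup(a_i,a_j)\bigr)$ taken with $a_i=v_i$ and $a_n=X(v_1,\ldots,v_{n-1})$, since $X(v_1,\ldots,v_{n-1})\perp v_i$ makes the Gram matrix block-diagonal. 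For $(n,r)=(7,2)$ one uses the imaginary octonions $\RR^7\cong\OO_0$ with $X(u,v)=\tfrac12(uv-vu)$, and for $(n,r)=(8,3)$ the octonions $\RR^8\cong\OO$ with a suitable triple cross product (e.g.\ $X(x,y,z)=\tfrac12\bigl(x(\bar y z)-z(\bar y x)\bigr)$); in both cases \eqref{eq:axioms1} and \eqref{eq:axioms2} are standard consequences of the composition algebra structure — orthogonality of $X$ to its arguments, and a Pythagoras-type identity extracted from multiplicativity of the octonion norm.

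For the non-existence part I would first record the elementary bounds and reductions. One has $1\le r\le n-1$: if $r\ge n$ the $v_i$ may be chosen to span $\RR^n$, forcing $X(v_1,\ldots,v_r)=0$ against \eqref{eq:axioms2}. The key tool is a \emph{freezing} step: given a cross product for $(n,r)$ and orthonormal vectors $e_1,\ldots,e_k$ with $0\le k\le r-1$, the map $\wt X(v_1,\ldots,v_{r-k})\bydef X(v_1,\ldots,v_{r-k},e_1,\ldots,e_k)$, restricted to $W\bydef\{e_1,\ldots,e_k\}^\perp\cong\RR^{n-k}$, is a cross product for $(n-k,r-k)$: by \eqref{eq:axioms1} it takes values in $W$, and the Gram matrix of $(v_1,\ldots,v_{r-k},e_1,\ldots,e_k)$ is block-diagonal so that \eqref{eq:axioms2} descends. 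For $r=1$ the axioms say precisely that $v\mapsto X(v)$ restricts to a nowhere-vanishing tangent vector field on $S^{n-1}$, whence $\chi(S^{n-1})=0$ and $n$ is even; combined with freezing down to $(n-r+1,1)$ this already forces $n-r$ to be odd in general.

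The core of the non-existence argument is the case $r=2$. Here I would turn a (merely continuous) cross product $X$ on $(\RR^n,\bup)$ into a product on $A\bydef\RR 1\oplus\RR^n$ by $(a+u)(b+v)\bydef\bigl(ab-\bup(u,v)\bigr)1+\bigl(av+bu+X(u,v)\bigr)$. Because \eqref{eq:axioms1} and \eqref{eq:axioms2} hold for \emph{all} $u,v$ (not only orthonormal ones), a short computation gives $N(xy)=N(x)N(y)$ for the norm $N(a+u)=a^2+\bup(u,u)$, and shows that $1$ is a two-sided unit; hence $A$ restricts to a continuous unital multiplication on the unit sphere $S^n$, so $S^n$ is an $H$-space, and by Adams' theorem $n+1\in\{1,2,4,8\}$, i.e.\ $n\in\{3,7\}$ since $n\ge3$. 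Applying the freezing step with $k=r-2$ then shows that any cross product with $2\le r\le n-2$ forces $n-r+2=7$, that is, $n=r+5$; for $r=2,3$ this leaves exactly $(7,2)$ and $(8,3)$.

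Finally, the cases $(n,r)=(r+5,r)$ with $r\ge4$ all collapse, by freezing one variable at a time, to a single one: a cross product for $(9,4)$. This last case is the main obstacle. A cross product for $(9,4)$ gives a continuous section of the Stiefel fibration $V_5(\RR^9)\to V_4(\RR^9)$ (append $X(v_1,\ldots,v_4)$ to the frame), whose fibre is $S^4$, and one must show no such section exists. This is an obstruction-theoretic computation — the primary obstruction lies in $H^5\bigl(V_4(\RR^9);\pi_4(S^4)\bigr)\cong\ZZ$ and one checks it is non-zero — and it is precisely here that the results of Eckmann and of G.~W.~Whitehead on cross-sections of Stiefel manifolds do the real work; unlike the cases $r\le3$, this one does not reduce to a statement about $H$-spaces or about vector fields on spheres, which is what makes it the crux.
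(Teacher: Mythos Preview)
The paper does not give its own proof of this theorem: it is stated with attribution to \cite{Eckmann,Whitehead} and used as background, while the paper's own contribution begins with the Brown--Gray multilinear version and the later Theorem~\ref{th:CP}. So there is nothing in the paper to compare your argument against directly.

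That said, your outline is the classical one and is correct in its architecture. The existence constructions (complex structure for $r=1$, Hodge star for $r=n-1$, octonions for $(7,2)$ and $(8,3)$) are exactly the standard ones, and your verification of the norm identity for the $(n+1)$-dimensional algebra built from a $2$-fold cross product is clean and does not use bilinearity of $X$, so it genuinely yields an $H$-space structure on $S^n$ in the continuous setting and Adams' theorem applies. The freezing reduction $(n,r)\leadsto(n-k,r-k)$ is correct as stated (block-diagonality of the Gram matrix does the work), and it legitimately funnels all remaining cases into $(9,4)$. Your identification of the $(9,4)$ case with the non-existence of a section of $V_5(\RR^9)\to V_4(\RR^9)$ is right, and you are honest that this final obstruction calculation is where the cited work of Eckmann and Whitehead is actually needed. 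One small historical remark: Eckmann's 1943 paper predates Adams, so the original argument for the $r=2$ step used different tools; invoking Adams is a valid and now standard shortcut, but it is a modernization rather than a reconstruction of the cited proof.
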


Multilinear vector cross products $X$ on vector spaces $V$ over
arbitrary fields of characteristic not two, relative to a
nondegenerate symmetric bilinear form $\bup(u,v)$, were
classified by Brown and Gray \cite{BG}. These are the
multilinear maps $X:V^r\rightarrow V$ ($1\leq r\leq n$)
satisfying \eqref{eq:axioms1} and \eqref{eq:axioms2}.
The possible pairs $(n,r)$ are again those in Theorem \ref{th:EW}.

The exceptional cases: $(n,r)=(7,2)$ and $(8,3)$, are
intimately related to the octonion, or Cayley, algebras.

These multilinear vector cross products have become important
tools in Differential Geometry or Nonassociative Algebras (see, for
instance, \cite{Karigiannis,EKO}). In particular, they are closely related
to the exceptional classical simple Lie superalgebras: $G(3)$,
$F(4)$, $D(2,1;\alpha)$ (\cite{KO}).

An elementary account on multilinear cross products, and of their 
connections with the exceptional basic classical simple Lie 
superalgebras can be found in \cite{Eld_Talk}.

\medskip

The nondegenerate symmetric bilinear form $\bup(u,v)$ is part of
the definition given by Eckmann and Brown and Gray. However,
this is not, in general, uniquely determined. Therefore, we will
make use of the following definition.

\begin{df}\label{df:CP} 
Let $V$ be a finite-dimensional vector
space over a field $\FF$ of characteristic not two, and let $r$
be a natural number with $1\leq r\leq n$ ($n=\dim_\FF V$).

An \emph{$r$-fold cross product} $X$ on $V$ is a multilinear map
\[ 
X:V^r\longrightarrow V 
\] 
such that there is a nondegenerate
symmetric bilinear form $\bup:V\times V\rightarrow \FF$ satisfying
 conditions \eqref{eq:axioms1} and \eqref{eq:axioms2}.
\end{df}

In this situation we will say that $X$ is a cross product on $V$
\emph{relative to $\bup$}, or that $\bup$ \emph{admits} the 
$r$-fold cross product $X$.

Alternatively, we will simply say that $(V,X)$ (or $(V,X,\bup)$,
if $\bup$ is fixed) is a cross product.

Two such pairs $(V_1,X_1)$ and $(V_2,X_2)$ are said to be
isomorphic if the rank $r$ is the same in both cases and there
is a linear isomorphism $\varphi:V_1\rightarrow V_2$ such that
$\varphi\bigl(X_1(v_1,\ldots,v_r)\bigr)=
X_2\bigl(\varphi(v_1),\ldots,\varphi(v_r)\bigr)$ for any
$v_1,\ldots,v_r\in V_1$; while two such triples
$(V_1,X_1,\bup_1)$ and $(V_2,X_2,\bup_2)$ are said to be
isomorphic if there is a linear isometry
$\varphi:(V_1,\bup_1)\rightarrow (V_2,\bup_2)$ such that
$\varphi\bigl(X(v_1,\ldots,v_r)\bigr)
=X\bigl(\varphi(v_1),\ldots,\varphi(v_r)\bigr)$ for any
$v_1,\ldots,v_r\in V_1$.

\smallskip

Therefore, given a cross product $X$ on the vector space $V$
relative to the bilinear form $\bup$, we will consider two 
\emph{automorphism groups}, depending on whether the bilinear form
$\bup$ is considered a part of the definition: 
\[ 
\begin{split}
\Aut(V,X)&\bydef\{\varphi\in \GL(V)\mid\\[-1pt]
&\null\qquad\qquad\varphi\bigl(X(v_1,\ldots,v_r)\bigr)=
X\bigl(\varphi(v_1),\ldots,\varphi(v_r)\bigr)\ 
\forall v_1,\ldots,v_r\in V\},\\ 
\Aut(V,X,\bup)&\bydef \Aut(V,X)\cap \Ort(V,\bup), 
\end{split} 
\] 
where $\GL(V)$ denotes the general linear group of $V$, and 
$\Ort(V,\bup)$ the orthogonal group of $(V,\bup)$. The last group
$\Aut(V,X,\bup)$ is the one considered in \cite{BG}.

\smallskip

More generally, we will consider the corresponding affine group
schemes, which will be treated in a functorial point of view
(see \cite{Waterhouse} or \cite[Chapter VIII]{KMRT}). That is,
for any unital, associative, commutative $\FF$-algebra $R$, the
corresponding group of $R$-points are the following: 
\[
\begin{split} 
\AAut(V,X)(R)&=\{\varphi\in \GLs(V)(R)\mid \\[-1pt]
&\null\qquad\varphi\bigl(X_R(v_1,\ldots,v_r)\bigr)=
X_R\bigl(\varphi(v_1),\ldots,\varphi(v_r)\bigr)\ 
 \forall v_1,\ldots,v_r\in V_R\},\\[4pt]
\AAut(V,X,\bup)(R)&= \AAut(V,X)(R)\cap \Os(V,\bup)(R),
\end{split} 
\] 
where $V_R=V\otimes_\FF R$, $X_R$ denotes the
scalar extension of $X$ to $V_R$, and $\GLs(V)$ and
$\Os(V,\bup)$ denote the general linear affine group scheme and
the orthogonal group scheme attached to $V$ and $\bup$.

\medskip

\emph{All vector spaces considered from now on will be assumed
to be finite-dimensional and defined over a ground field $\FF$
of characteristic not two.}

\medskip

The paper is organized as follows. Section \ref{se:cross} will be
 devoted to reviewing the basic results and examples of cross products
 and will answer the question as to what extent the form $\bup$ is
 uniquely determined by the cross product. In section \ref{se:autos},
the affine group schemes of automorphisms of $r$-fold cross products 
will be determined. In the most interesting cases, these are simply 
connected algebraic groups of types $G_2$ and $B_3$. In case 
$r=\dim_\FF V-1$, these group schemes are close to the special 
orthogonal group scheme, but they may fail to be smooth. 
Finally, Section 
\ref{se:gradings} will be devoted to classifying gradings by abelian 
groups, up to isomorphism, on cross products. Fine gradings, up to 
equivalence, will be classified too, and the associated Weyl groups will 
be computed. In future work, these classifications will be used to 
study gradings by abelian groups on the exceptional simple 
basic classical Lie superalgebras.

\bigskip

%---------------------

\section{Cross products}\label{se:cross}

In this section, some results on cross products will be reviewed
in a way suitable for our purposes. The problem of the uniqueness of
the associated bilinear form will be tackled too.

\smallskip

If $(V,X,\bup)$ is an $r$-fold cross product, then it is clear
that for any nonzero scalars $\alpha,\beta\in\FF$, with
$\beta^{r-1}=\alpha^2$,  $(V,\alpha X,\beta\bup)$ is an
$r$-fold cross product too.

Let us review the main examples of cross products. First, let us
recall from \cite[\S 3]{BG} the notion of star operator. Let
$\bup$ be a nondegenerate symmetric bilinear form of
discriminant $1$ on the vector space $V$ of dimension $n$. Then
$\bup$ extends to the exterior algebra $\bigwedge V$ as follows:
\[ 
\bup\bigl(u_1\wedge\cdots\wedge u_p,v_1\wedge\cdots\wedge
v_q\bigr)
=\begin{cases}
\det\Bigl(\bup(u_i,v_j)\Bigr)\quad\text{if $p=q$,}\\
0\quad\text{otherwise.}
\end{cases} 
\] 
As the discriminant is
$1$, there exists an element $\omega\in\bigwedge^nV$, unique up
to sign, such that $\bup(\omega,\omega)=1$. The \emph{star
operator} relative to $\bup$ and $\omega$ is the linear map
${}^*:\bigwedge V\rightarrow \bigwedge V$, such that
${}^*\bigl(\bigwedge^pV\bigr)=\bigwedge^{n-p}V$, $0\leq p\leq
n$, defined by 
\[ 
\bup\bigl({}^*x,y\bigr)=\bup\bigl(x\wedge y,\omega\bigr), 
\] 
for any $0\leq p\leq n$, $x\in\bigwedge^pV$,
$y\in\bigwedge^{n-p}V$.

In this case, the multilinear map 
\[ 
\begin{split}
X:V^{n-1}&\longrightarrow V\\
(v_1,\ldots,v_{n-1})&\mapsto{}^*(v_1\wedge\cdots\wedge v_{n-1})
\end{split} 
\]
is an $(n-1)$-fold cross product on $V$ relative to $\bup$.

\smallskip

Let now $\cC$ be a Cayley algebra with norm $\nup$. That is,
$\cC$ is an eight-dimensional unital nonassociative algebra
endowed with a nondegenerate quadratic form $\nup$ admitting
composition: 
\[ 
\nup(xy)=\nup(x)\nup(y) 
\] 
for all $x,y\in\cC$.
Any $x\in\cC$ satisfies the Cayley-Hamilton equation: 
\begin{equation}\label{eq:CH}
x^2-\nup(x,1)x+\nup(x)1=0 
\end{equation}
where
$\nup(x,y)=\nup(x+y)-\nup(x)-\nup(y)$ is the polar form of
$\nup$. Write $\bnup(x,y)=\frac{1}{2}\nup(x,y)$. The linear map
$x\mapsto \bar x\bydef\nup(x,1)1-x$ is the canonical involution
of $\cC$. It satisfies the equations
\begin{equation}\label{eq:nbar}
\nup(xy,z)=\nup(y,\bar xz)=\nup(x,z\bar y)
\end{equation}
for all $x,y,z\in\cC$.
Any two elements in $\cC$ generate an associative subalgebra.

Let $\cC_0$ be the subspace orthogonal to the unity $1$:
$\cC_0=\{x\in\cC\mid \nup(x,1)=0\}$. For any $x,y\in\cC_0$, the
product $xy$ in $\cC$ splits as: 
\begin{equation}\label{eq:C0xy}
xy=-\bnup(x,y)1+x\times y 
\end{equation} 
for some $x\times
y\in\cC_0$. Then $\times$ is an anticommutative multiplication
on $\cC_0$ that satisfies: 
\begin{equation}\label{eq:C0xyy}
(x\times y)\times y=\bnup(x,y)y-\bnup(y,y)x 
\end{equation} 
for all $x,y\in\cC_0$ (see, e.g., \cite[Chapter VIII]{KMRT} or
\cite[Chapter 4]{EKmon}).

Moreover, $(\cC_0,X^{\cC_0},\bnup)$ is a $2$-fold cross product,
with $X^{\cC_0}(x,y)=x\times y$.

\smallskip

On the other hand, $(\cC,X^\cC_\epsilon,\bnup)$, with
$\epsilon=\pm 1$, where $X^\cC_\epsilon$ is given by the
formulas:  
\begin{align} 
X^\cC_1(x,y,z)&=(x\bar y)z-\bnup(x,y)z-\bnup(y,z)x+\bnup(x,z)y,
\label{eq:typeI}\\
X^\cC_{-1}(x,y,z)&=x(\bar yz)-\bnup(x,y)z-\bnup(y,z)x+\bnup(x,z)y,
\label{eq:typeII} 
\end{align} 
is a $3$-fold cross product on $\cC$ (see \cite[Theorem 5.1]{BG}).

Moreover, the following holds for all $u_i,v_i\in\cC$, $1\leq
i\leq 3$ (see \cite[Proposition 3]{Korean}):
\begin{multline}\label{eq:Ctype}
\bnup\Bigl(X^\cC_\epsilon(u_1,u_2,u_3),X^\cC_\epsilon(v_1,v_2,v_3)\Bigr)
=\det\Bigl(\bnup(u_i,v_j)\Bigr)\\ + \epsilon\sum_{\sigma\
\text{even}}\sum_{\tau\ \text{even}}
\bnup(u_{\sigma(1)},v_{\tau(1)}) \bnup\Bigl(u_{\sigma(2)},
X^\cC_\epsilon(u_{\sigma(3)},v_{\tau(2)},v_{\tau(3)})\Bigr).
\end{multline} 
($\epsilon$ is either $1$ or $-1$, and the sums
are over the even permutations of $1,2,3$.)

\medskip

Now we have all the ingredients to review the classification of
cross products. The trace of a linear
operator $X$ will be denoted by $\tr(X)$.

\begin{theorem}\label{th:CP} 
Let $X$ be an $r$-fold cross
product on a vector space $V$ of dimension $n$, $1\leq r\leq n$.
Then one, and only one, of the following conditions holds:
\begin{romanenumerate} 
\item $n$ is even, $r=1$, $X^2=-\id$, and
$\tr(X)=0$.

\item $n\geq 3$, $r=n-1$, and 
\[
X(v_1,\ldots,v_{n-1})={}^*(v_1\wedge\cdots\wedge v_{n-1}) 
\] 
for all $v_1,\ldots,v_{n-1}\in V$, where $*$ is the star operator
relative to a nondegenerate symmetric bilinear form $\bup$ of
discriminant $1$ and an element $\omega\in\bigwedge^nV$ with
$\bup(\omega,\omega)=1$.

\item $n=7$, $r=2$, and $(V,X)$ is isomorphic to
$(\cC_0,X^{\cC_0})$ for a Cayley algebra $\cC$.

\item $n=8$, $r=3$, and $(V,X)$ is isomorphic to $(\cC,\alpha
X^{\cC}_1)$ for a Cayley algebra $\cC$ and a nonzero scalar
$\alpha\in \FF$. 
\end{romanenumerate}

Conversely, all the pairs $(V,X)$ in items \textup{(i)--(iv)} are cross
products. 
\end{theorem}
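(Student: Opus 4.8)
The plan is to analyze an arbitrary $r$-fold cross product $(V,X,\bup)$ case-by-case according to the value of $r$, and then invoke the Brown–Gray classification (equivalently, the list of pairs $(n,r)$ in Theorem~\ref{th:EW}) to know which cases can occur. First I would record some elementary consequences of the axioms \eqref{eq:axioms1}–\eqref{eq:axioms2}. Polarizing \eqref{eq:axioms2} gives
\[
\bup\bigl(X(v_1,\dots,v_r),X(v_1,\dots,\wh{v_i},\dots,w,\dots,v_r)\bigr)
=\det\bigl(\text{Gram matrix with }v_i\text{ replaced by }w\text{ in the }i\text{-th slot}\bigr),
\]
and, more importantly, the standard identity expressing $\bup\bigl(X(v_1,\dots,v_r),w\bigr)$ as an alternating function; together with \eqref{eq:axioms1} this forces $X$ to be alternating in its $r$ arguments. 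These facts, plus nondegeneracy of $\bup$, are the workhorses.

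Next I would treat the four cases. For $r=1$: \eqref{eq:axioms1} says $\bup(X(v),v)=0$ and \eqref{eq:axioms2} says $\bup(X(v),X(v))=\bup(v,v)$. Polarizing the latter shows $X\in\Ort(V,\bup)$, and $\bup(Xv,v)=0$ for all $v$ means $X$ is skew-adjoint as well; combining, $X^2$ is self-adjoint and one computes $\bup(X^2v,w) = -\bup(Xv,Xw) = -\bup(v,w)$ wait—more carefully, $\bup(X^2 v, w) = -\bup(Xv, Xw)$, and separately $\bup(Xv,Xw)=\bup(v,w)$, hence $\bup(X^2v,w)=-\bup(v,w)$ for all $w$, so $X^2=-\id$ by nondegeneracy. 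Then $\det(X)^2=\det(-\id)=(-1)^n$ forces $n$ even, and $X^2=-\id$ with no real eigenvalues gives $\tr(X)=0$ (the minimal polynomial $t^2+1$ has no repeated roots, so $X$ is semisimple with eigenvalues $\pm\bi$ in equal multiplicity over $\Falg$). For $r=n-1$: this is exactly \cite[\S 3]{BG}; I would show that the discriminant of $\bup$ must be $1$ (a short computation using that $X$ evaluated on a basis spans the orthogonal complement of that basis, combined with \eqref{eq:axioms2} on an orthogonal basis), pick $\omega$ with $\bup(\omega,\omega)=1$, and verify that $X$ and the star operator $v_1,\dots,v_{n-1}\mapsto{}^*(v_1\wedge\cdots\wedge v_{n-1})$ agree by checking $\bup$ against an arbitrary $w$: both sides equal $\bup(v_1\wedge\cdots\wedge v_{n-1}\wedge w,\omega)$, which pins down $X$ uniquely by nondegeneracy of $\bup$ (after possibly replacing $\omega$ by $-\omega$). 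For $r=2$, $n=7$ and $r=3$, $n=8$, the existence of a compatible Cayley structure is precisely \cite[Theorem~5.1]{BG} (or can be cited from \cite{BG}); for $n=8$ I would note that scaling by $\alpha$ and adjusting $\bup$ as in the opening remark of this section accounts for the scalar, and that type II in \eqref{eq:typeII} is isomorphic to type I via the canonical involution of $\cC$ (which is an isometry of $\bnup$ and conjugates $X^\cC_{-1}$ to $X^\cC_1$ up to relabeling arguments).

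For the ``one and only one'' assertion, I would observe that the four cases are mutually exclusive by inspection of $(n,r)$: the only overlaps would be small dimensions, e.g. $(n,r)=(2,1)$ versus nothing, $(n,r)=(3,2)$ which is case (ii) only (not (iii), since $n=7$ there), and $(n,r)=(8,7)$ which is case (ii) not (iv); one checks $r=1$ with $n$ even never coincides with $r=n-1$ except when $n=2$, and $n=2$, $r=1$ is only case (i) since case (ii) needs $n\ge 3$. The appeal to Brown–Gray guarantees no \emph{other} pairs $(n,r)$ support a cross product, so the list is complete. The converse direction—that each displayed $(V,X)$ genuinely is a cross product—is immediate: case (i) by construction of a complex structure compatible with $\bup$; case (ii) from the properties of the star operator recalled above; cases (iii) and (iv) from the statements in this section that $(\cC_0,X^{\cC_0},\bnup)$ and $(\cC,X^\cC_1,\bnup)$ are cross products, together with the scaling remark for the scalar $\alpha$.

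The main obstacle I anticipate is the $r=n-1$ case: proving that the bilinear form admitting $X$ necessarily has discriminant $1$ and that $X$ is then forced to be the star operator (not merely that the star operator is \emph{a} cross product) requires a careful argument that $X(v_1,\dots,v_{n-1})$ is determined by its pairings against all $w$, using multilinearity, the alternating property, and \eqref{eq:axioms2} to fix the normalization. Everything else is either a direct computation (case $r=1$) or a citation to \cite{BG} (cases $r=2,3$), so the bulk of the real work—and the only place where one must be genuinely careful about signs and the choice of $\omega$—is in establishing part (ii).
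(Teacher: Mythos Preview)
Your proposal is correct and follows essentially the same strategy as the paper: cite \cite{BG} for the constraint on $(n,r)$, handle $r=1$ by the skew-isometry computation giving $X^2=-\id$, and defer cases (ii)--(iv) to the Brown--Gray results. Two minor differences are worth flagging. First, for $n=8$, $r=3$, the paper disposes of type II by observing that if $(V,X,\bup)$ is of type I then $(V,X,-\bup)$ is of type II, so one may replace $\bup$ by $-\bup$; your route via the canonical involution works too (it gives $\overline{X^\cC_{-1}(x,y,z)}=-X^\cC_1(\bar x,\bar y,\bar z)$, hence an isomorphism $(\cC,X^\cC_{-1})\cong(\cC,-X^\cC_1)$), but note the sign: you land in $(\cC,\alpha X^\cC_1)$ with $\alpha=-1$, not $\alpha=1$. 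Second, for $r=n-1$ you plan to reprove the star-operator description in detail, whereas the paper simply invokes \cite[Theorem 3.3]{BG}; your sketch is fine but unnecessary. One small caution: the line ``$\det(X)^2=(-1)^n$ forces $n$ even'' is not valid over a general field (where $-1$ may be a square); the parity of $n$ comes from \cite{BG}, which you already cite, so just drop that remark.
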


\begin{proof} 
With the exception of a few details, everything
follows from \cite{BG}. In particular, it is proved in \cite{BG}
that the only possible pairs $(n,r)$ are $(2s,1)$, $(n,n-1)$,
$(7,2)$ and $(8,3)$.

If $r=1$ and $X$ is a $1$-fold cross product relative to the
bilinear form $\bup$, then $X$ satisfies
$\bup\bigl(X(u),u\bigr)=0$ and
$\bup\bigl(X(u),X(v)\bigr)=\bup(u,v)$ for all $u,v\in V$. That
is, $X$ is both a skew-symmetric transformation and an isometry
relative to $\bup$. If $X^*$ denotes the adjoint of the
endomorphism $X$ relative to $\bup$, then we have $X^*=-X$ and
$XX^*=\id$, and this implies $X^2=-\id$ and $\tr(X)=0$.

Conversely, let $X:V\rightarrow V$ be an endomorphism such that
$X^2=-\id$ and $\tr(X)=0$. If $-1\in\FF^2$ and we pick
$\bi\in\FF$ with $\bi^2=-1$, then $V=V_+\oplus V_-$ with
$V_{\pm}=\{v\in V\mid X(v)=\pm\bi v\}$. The condition $\tr(X)=0$
gives $\dim_\FF V_+=\dim_\FF V_-$ so $n$ is even and $X$ is a
$1$-fold cross product relative to any nondegenerate symmetric
bilinear form $\bup:V\times V\rightarrow \FF$ with
$\bup(V_+,V_+)=0=\bup(V_-,V_-)$.

If $-1\not\in\FF^2$, then the subalgebra $\KK=\FF\id\oplus\FF X$
of $\End_\FF(V)$ is a field, and $V$ is a vector space over
$\KK$. For any $\KK$-basis $\{v_1,\ldots,v_s\}$ of $V$, $n=2s$
and $X$ is a $1$-fold cross product relative to the bilinear
form $\bup$ defined so that the $\FF$-basis
$\{v_1,X(v_1),\ldots,v_s,X(v_s)\}$ is orthogonal and
$\bup(v_i,v_i)=\bup\bigl(X(v_i),X(v_i)\bigr)=1$ for all $i$.

The case $n\geq 3$, $r=n-1$ is proved in \cite[Theorem 3.3]{BG}.
For $n=7$, $r=2$ it follows from \cite[Theorem 4.1]{BG}.

Finally, if $n=8$, $r=3$, and $(V,X,\bup)$ is a $3$-fold cross
product, then we have \eqref{eq:Ctype} for $\epsilon=1$ (type
I) or $\epsilon=-1$ (type II). If $(V,X,\bup)$ is of type I,
then $(V,X,-\bup)$ is of type II. Hence, changing $\bup$ by
$-\bup$ if necessary, we may assume that $(V,X,\bup)$ is of type
I and (iv) follows from \cite[Theorem 5.1]{BG} and
\cite[Proposition 3]{Korean}. 
\end{proof}

\begin{remark}\label{re:uniqueness_b} 
The proof of Theorem
\ref{th:CP} shows that the bilinear form $\bup$ is not
determined at all from the cross product $X$ for $r=1$. Equation
\eqref{eq:C0xy} shows, on the other hand, that $\bup$ is uniquely
determined by $X$ in case $n=7$, $r=2$. 
\end{remark}

The remaining cases are dealt with in the next result.

\begin{proposition}\label{pr:uniqueness_b} 
Let $X$ be an
$r$-fold cross product on an $n$-dimensional vector space $V$
relative to the nondegenerate symmetric bilinear forms $\bup$
and $\bup'$. 
\begin{itemize} 
\item 
If $n\geq 3$ and $r=n-1$,
then there is a scalar $\mu\in\FF$ with $\mu^{n-2}=1$, such that
$\bup'=\mu\bup$. \\
In particular, for $n=3$, $\bup$ is uniquely determined.
\item 
If $n=8$ and $r=3$, then $\bup'$ equals
either $\bup$ or $-\bup$. In particular, the bilinear form
$\bup$ is unique if $(V,X,\bup)$ is assumed to be of type I.
\end{itemize} 
\end{proposition}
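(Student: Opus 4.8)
The plan is, in both cases, first to reduce the statement to the assertion that $\bup'=\mu\bup$ for some scalar $\mu\in\FF^\times$, and then to pin down $\mu$ using axiom \eqref{eq:axioms2}. For the second step, I would fix an orthogonal basis of $(V,\bup)$ (available since $\chr\FF\neq 2$) and take $v_1,\dots,v_r$ to be $r$ of its vectors, so that $\det\bigl(\bup(v_i,v_j)\bigr)\neq 0$; then comparing \eqref{eq:axioms2} for $\bup$ and for $\bup'=\mu\bup$ at the tuple $(v_1,\dots,v_r)$ gives $\mu^{r}\det\bigl(\bup(v_i,v_j)\bigr)=\det\bigl(\mu\bup(v_i,v_j)\bigr)=\bup'\bigl(X(\bar v),X(\bar v)\bigr)=\mu\,\bup\bigl(X(\bar v),X(\bar v)\bigr)=\mu\,\det\bigl(\bup(v_i,v_j)\bigr)$, whence $\mu^{r-1}=1$. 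For $r=n-1$ this reads $\mu^{n-2}=1$ (so $\mu=1$, and $\bup$ is unique, when $n=3$); for $n=8$, $r=3$ it reads $\mu^{2}=1$, i.e.\ $\bup'=\pm\bup$, and since $(V,X,-\bup)$ is of type II when $(V,X,\bup)$ is of type I (see the proof of Theorem \ref{th:CP}), the type I form is then unique.

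It remains to prove $\bup'=\mu\bup$. \emph{Case $r=n-1$.} By Theorem \ref{th:CP}(ii), $X(v_1,\dots,v_{n-1})={}^*(v_1\wedge\cdots\wedge v_{n-1})$ for the star operator attached to $\bup$. Every element of $\bigwedge^{n-1}V$ is decomposable: a nonzero $\eta\in\bigwedge^{n-1}V$ is annihilated by $v\wedge\,\cdot\,$ exactly on a hyperplane $H_\eta$ of $V$, and then $\eta$ is a scalar multiple of $v_1\wedge\cdots\wedge v_{n-1}$ for any basis $v_1,\dots,v_{n-1}$ of $H_\eta$. Since ${}^*$ is bijective, it follows that every nonzero vector of $V$ equals $X(v_1,\dots,v_{n-1})$ for some linearly independent $v_1,\dots,v_{n-1}$. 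For such a tuple, \eqref{eq:axioms1} applied to $\bup$ and to $\bup'$ shows that $X(v_1,\dots,v_{n-1})$ spans both $\langle v_1,\dots,v_{n-1}\rangle^{\perp_\bup}$ and $\langle v_1,\dots,v_{n-1}\rangle^{\perp_{\bup'}}$, each of which is a line because $\langle v_1,\dots,v_{n-1}\rangle$ is a hyperplane. Hence every line of $V$ is the $\bup$-orthogonal complement, and also the $\bup'$-orthogonal complement, of one and the same hyperplane. Writing $\bup'(x,y)=\bup(gx,y)$ with $g\in\GL(V)$, this forces $g$ to preserve every line, so $g=\mu\,\id$ and $\bup'=\mu\bup$.

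\emph{Case $n=8$, $r=3$.} Here the previous argument fails, since $\langle u,v,w\rangle^{\perp_\bup}$ has dimension $5$ and $X(u,v,w)$ determines no line; the idea is to slice down to the seven-dimensional case. After extending scalars to $\Falg$ (harmless, as the conclusion descends), pick $e\in V$ with $\delta:=\bup(e,e)\neq 0$ and $\delta':=\bup'(e,e)\neq 0$. The map $Y_e\colon e^{\perp_\bup}\times e^{\perp_\bup}\to V$, $(x,y)\mapsto X(e,x,y)$, takes values in $e^{\perp_\bup}$ by \eqref{eq:axioms1}, and a direct check from \eqref{eq:axioms1}--\eqref{eq:axioms2} shows that $\bigl(e^{\perp_\bup},Y_e,\delta\,\bup|_{e^{\perp_\bup}}\bigr)$ is a $2$-fold cross product on the $7$-dimensional space $e^{\perp_\bup}$. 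By Theorem \ref{th:CP}(iii) together with Remark \ref{re:uniqueness_b}, any $2$-fold cross product is isometric to $(\cC_0,X^{\cC_0},\bnup)$, so the identity \eqref{eq:C0xyy} transports to it and gives
\[
X\bigl(e,X(e,x,y),y\bigr)=\delta\bigl(\bup(x,y)\,y-\bup(y,y)\,x\bigr)\qquad(x,y\in e^{\perp_\bup}),
\]
and likewise $X\bigl(e,X(e,x,y),y\bigr)=\delta'\bigl(\bup'(x,y)\,y-\bup'(y,y)\,x\bigr)$ for $x,y\in e^{\perp_{\bup'}}$. Comparing these two identities on the subspace $e^{\perp_\bup}\cap e^{\perp_{\bup'}}$ (of dimension $\geq 6$) shows that $\delta\,\bup$ and $\delta'\,\bup'$ agree there. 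Finally, letting $e$ range and using that the subspaces $e^{\perp_\bup}\cap e^{\perp_{\bup'}}$, and their pairwise intersections (of dimension $\geq 4$), are for generic $e$ non-degenerate for $\bup$ and jointly exhaust $V$, one gets a single scalar $\mu$ with $\bup'=\mu\bup$, and the first paragraph finishes the proof.

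The case $r=n-1$ is essentially formal. The substance is the case $n=8$, $r=3$, where the cross product carries no information about the polarity of $\bup$, so one must exploit finer structure; the slicing trick converts the problem into the already-settled uniqueness in dimension $7$. I expect the main obstacle to be the concluding gluing step — arranging that, as $e$ varies, the subspaces $e^{\perp_\bup}\cap e^{\perp_{\bup'}}$ and their overlaps are non-isotropic and cover enough of $V$ to produce a global scalar — which is the reason for passing to $\Falg$ and arguing with generic elements, the final conclusion $\bup'=\pm\bup$ then descending to $\FF$ because both forms are defined over $\FF$.
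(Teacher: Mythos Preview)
Your proof is correct and in both parts follows a genuinely different path from the paper. For $r=n-1$, the paper passes to an algebraic closure, fixes an orthonormal basis $\{v_1,\dots,v_n\}$, and observes that the alternating $n$-forms $\Phi(u_1,\dots,u_n)=\bup\bigl(X(u_1,\dots,u_{n-1}),u_n\bigr)$ and $\Phi'$ (built from $\bup'$) must satisfy $\Phi'=\mu\Phi$, whence $\bup'=\mu\bup$ by evaluating on basis vectors; your polarity argument ($H^{\perp_\bup}=H^{\perp_{\bup'}}$ for every hyperplane $H$, forcing the operator $g$ with $\bup'(\cdot,\cdot)=\bup(g\cdot,\cdot)$ to fix every line) is coordinate-free and a bit slicker---note only that Theorem \ref{th:CP}(ii) gives $X={}^*$ for \emph{some} form, but you only use bijectivity of the induced map $\bigwedge^{n-1}V\to V$, which is intrinsic to $X$. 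For $n=8$, $r=3$, the paper derives directly from axioms \eqref{eq:axioms1}--\eqref{eq:axioms2} the identity $X(x,y,\cdot)^3=-\bigl(\bup(x,x)\bup(y,y)-\bup(x,y)^2\bigr)\,X(x,y,\cdot)$, which immediately yields $\bup(x,x)\bup(y,y)-\bup(x,y)^2=\bup'(x,x)\bup'(y,y)-\bup'(x,y)^2$ for all $x,y$, and then a short argument on the $6$-dimensional subspaces $S_x=x^{\perp_\bup}\cap x^{\perp_{\bup'}}$ pins down $\bup'=\pm\bup$. Your reduction to the $7$-dimensional case via the slice $Y_e=X(e,\cdot,\cdot)$ and Remark \ref{re:uniqueness_b} is conceptually attractive, but it trades the paper's single clean identity for a gluing step over varying $e$; that step does go through (the ratio $\mu_e=\bup(e,e)/\bup'(e,e)$ is a regular function on the irreducible Zariski-open locus of valid $e$, locally constant via the pairwise-intersection argument, hence constant, and then density plus polarization give $\bup'=\mu\bup$ globally), but---as you yourself flag---it needs to be spelled out, whereas the paper's route sidesteps it entirely.
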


\begin{proof} 
Let $n\geq 3$, $r=n-1$, and without loss of
generality, assume that $\FF$ is algebraically closed. Let
$\{v_1,\ldots,v_n\}$ be an orthogonal basis relative to $\bup$,
with $\bup(v_i,v_i)=1$ for all $i$.

As $\bup\bigl(X(v_1,\ldots,v_{n-1}),v_i\bigr)=0$ for all
$i=1,\ldots,n-1$, and
\[
\bup\bigl(X(v_1,\ldots,v_{n-1}),X(v_1,\ldots,v_{n-1})\bigr)=\det\Bigl(\bup(v_i,v_j)\Bigr)=1,
\]
changing if necessary $v_n$ by $-v_n$, we may assume that
$X(v_1,\ldots,v_{n-1})=v_n$. From the fact that
$\Phi:V^n\rightarrow \FF$, given by
$\Phi(u_1,\ldots,u_n)=\bup\bigl(X(u_1,\ldots,u_{n-1}),u_n\bigr)$,
is multilinear and alternating, we conclude that for any
permutation $\sigma$, 
\begin{equation}\label{eq:Xsigma}
X(v_{\sigma(1)},\ldots,v_{\sigma(n-1)})=(-1)^\sigma
v_{\sigma(n)}, 
\end{equation}
where $(-1)^\sigma$ denotes the signature of $\sigma$.

But $\Phi':V^n\rightarrow \FF$ given by
$\Phi'(u_1,\ldots,u_n)=\bup'\bigl(X(u_1,\ldots,u_{n-1}),u_n\bigr)$,
is also alternating, and hence there is a nonzero scalar $\mu$
such that $\Phi'=\mu\Phi$. For any permutation $\sigma$, 
\[
\bup'(v_{\sigma(n)},v_{\sigma(i)})=
(-1)^\sigma\Phi'\bigl(v_{\sigma(1)},\ldots,v_{\sigma(n-1)},v_{\sigma(i)}\bigr) 
=\mu\bup(v_{\sigma(n)},v_{\sigma(i)}),
\] 
so that we get $\bup'=\mu\bup$. Equation \eqref{eq:axioms2}
gives $\mu^{n-2}=1$.

\smallskip

In case $n=8$, $r=3$, assume again that $\FF$ is algebraically
closed and let $\Phi:V^4\rightarrow \FF$ (respectively
$\Phi':V^4\rightarrow \FF$) be the alternating multilinear map
given by
$\Phi(u_1,u_2,u_3,u_4)=\bup\bigl(X(u_1,u_2,u_3),u_4\bigr)$
(respectively
$\Phi'(u_1,u_2,u_3,u_4)=\bup'\bigl(X(u_1,u_2,u_3),u_4\bigr)\,$).
Using \eqref{eq:axioms2} we get: 
\[ 
\begin{split}
\begin{vmatrix}
\bup(x,x)&\bup(x,y)&\bup(x,v)\\ 
\bup(y,x)&\bup(y,y)&\bup(y,v)\\
\bup(u,x)&\bup(u,y)&\bup(u,v) 
\end{vmatrix}
&=\bup\bigl(X(x,y,u),X(x,y,v)\bigr)\\[-6pt]
&=\Phi\bigl(x,y,u,X(x,y,v)\bigr)\\
&=-\Phi\bigl(x,y,X(x,y,v),u\bigr)\\
&=-\bup\bigl(X(x,y,X(x,y,v)),u\bigr) 
\end{split} 
\] 
The nondegeneracy of $\bup$ gives 
\[ 
X(x,y,X(x,y,v))=
\begin{vmatrix} \bup(x,x)&\bup(x,v)\\ \bup(y,x)&\bup(y,v)
\end{vmatrix}y 
- \begin{vmatrix} \bup(x,y)&\bup(x,v)\\
\bup(y,y)&\bup(y,v) \end{vmatrix}x 
- \begin{vmatrix}
\bup(x,x)&\bup(x,y)\\ \bup(y,x)&\bup(y,y) \end{vmatrix}v, 
\] 
and hence we get 
\[ 
X\Bigl(x,y,X\bigl(x,y,X(x,y,v)\bigr)\Bigr)= -
\begin{vmatrix} \bup(x,x)&\bup(x,y)\\ \bup(y,x)&\bup(y,y)
\end{vmatrix}X(x,y,v), 
\] 
that is, 
\[ 
X(x,y,.)^3=-
\begin{vmatrix} \bup(x,x)&\bup(x,y)\\ \bup(y,x)&\bup(y,y)
\end{vmatrix}X(x,y,.) ,
\] 
and the same happens replacing $\bup$ by $\bup'$.

Thus, for any $x,y\in V$ with $X(x,y,.)\neq 0$ we obtain:
\begin{equation}\label{eq:bb'xy}
\bup(x,x)\bup(y,y)-\bup(x,y)^2=\bup'(x,x)\bup'(y,y)-\bup'(x,y)^2.
\end{equation} 
As the set of such pairs $(x,y)$ is Zariski dense
in $V\times V$, we conclude that \eqref{eq:bb'xy} holds for any
$x,y\in V$.

For any $x\in V$ with $\bup(x,x)\neq 0$, the subspace $S_x=\{
v\in V\mid \bup(x,v)=0=\bup'(x,v)\}$ has dimension at least $6$,
so it is not isotropic for both $\bup$ and $\bup'$. For any
$v\in S_x$, \eqref{eq:bb'xy} gives 
\[
\bup(v,v)=\frac{\bup'(x,x)}{\bup(x,x)}\bup'(v,v) 
\] 
so that
$\bup=\mu\bup'$ on the subspace $S_x$ with
$\mu=\frac{\bup'(x,x)}{\bup(x,x)}$. But \eqref{eq:bb'xy} gives 
\[
(1-\mu^2)\begin{vmatrix} \bup(u,u)&\bup(u,v)\\ 
    \bup(v,u)&\bup(v,v)\end{vmatrix}=0
\]
for any $u,v\in S_x$ and this implies $\mu^2=1$. Thus $\mu=\pm 1$ and
 hence $\bup'(x,x)=\pm \bup(x,x)$ for any $x\in V$ with 
$\bup(x,x)\neq 0$. Therefore, the polynomial map 
\[
\Bigl(\bup(x,x)-\bup'(x,x)\Bigr)\Bigl(\bup(x,x)+\bup'(x,x)\Bigr)
\]
 is $0$ on the Zariski dense set of nonisotropic vectors for $\bup$, so it is $0$, and hence so is one of its factors. Thus either $\bup'=\bup$ or $\bup'=-\bup$.
\end{proof}

\bigskip

%-------------------------

\section{Automorphisms}\label{se:autos}

This section is devoted to computing the affine group schemes of
automorphisms of cross products.

\smallskip

\subsection{$n$ even, $r=1$}\quad

Let $X:V\rightarrow V$ be a $1$-fold cross product on the 
even-dimensional vector space $V$, relative to a 
nondegenerate symmetric bilinear form $\bup$. Theorem \ref{th:CP} 
and its proof tell us that 
$\KK=\FF\id\oplus \FF X$ is an \'etale $\FF$-algebra (either isomorphic
to $\FF\times \FF$ if $-1\in\FF^2$, or a quadratic field of 
$\FF$ otherwise). In any case, $\KK$ is endowed with a canonical involution: $X\mapsto -X$. Recall that $\bup$ is far from being determined by $X$.

Denote by $\Cs_{\GLs(V)}(X)$  the group scheme of elements 
that commute with $X$ in the general linear group scheme. 
The first part of the next result is trivial.

\begin{theorem}\label{th:2s1}
Let $X:V\rightarrow V$ be a $1$-fold cross product on the 
even-dimensional vector space $V$, relative to a 
nondegenerate symmetric bilinear form $\bup$.
\begin{itemize}
\item 
$\AAut(V,X)=\Cs_{\GLs(V)}(X)$.
\item
$\AAut(V,X,\bup)=\Us(V,\hup)$, where $\hup$ is the hermitian nondegenerate form given by
\[
\begin{split}
\hup: V\times V&\longrightarrow \KK\\
      (u,v)\, &\mapsto \bup(u,v)\id -\bup\bigl(X(u),v\bigr)X
\end{split}
\]
for any $u,v\in V$, and $\Us(V,\hup)$ is the corresponding \emph{unitary group scheme}, whose $R$-points are those
$\varphi\in\End_{\KK\otimes_\FF R}(V_R)$ such that 
$\hup_R\bigl(\varphi(u),\varphi(v)\bigr)=\hup_R(u,v)$ for all $u,v\in V_R=V\otimes_\FF R$. (The subindex $R$ denotes the natural scalar extension.)
\end{itemize}
\end{theorem}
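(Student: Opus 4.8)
The plan is to handle the two bullet points separately: the first is essentially a tautology, and the second reduces to a direct dictionary between "commuting with $X$" and "linearity over $\KK$", and between "isometry of $\bup$" and "isometry of $\hup$". For the first assertion, note that for $r=1$ the map $X$ is linear, so for every $R$ the condition $\varphi\bigl(X_R(v)\bigr)=X_R\bigl(\varphi(v)\bigr)$ for all $v\in V_R$ says exactly that $\varphi$ commutes with $X_R$; hence $\AAut(V,X)(R)=\Cs_{\GLs(V)}(X)(R)$, naturally in $R$.

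For the second assertion I would first record, from the proof of Theorem \ref{th:CP}, that $X^2=-\id$, that $X$ is skew relative to $\bup$ (so $\bup\bigl(X(u),v\bigr)=-\bup\bigl(u,X(v)\bigr)$, and in particular $\bup\bigl(X(u),u\bigr)=0$), and that $X$ is a $\bup$-isometry, i.e. $\bup\bigl(X(u),X(v)\bigr)=\bup(u,v)$; moreover $\KK=\FF\id\oplus\FF X\cong\FF[t]/(t^2+1)$ is a quadratic \'etale $\FF$-algebra with involution $a\id+bX\mapsto a\id-bX$, and the embedding $\KK\hookrightarrow\End_\FF(V)$ turns $V$ into a free $\KK$-module of rank $n/2$. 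Using only these identities one checks that $\hup$ is a nondegenerate hermitian form on the $\KK$-module $V$: it is additive in each variable and $\hup\bigl(X(u),v\bigr)=\bup\bigl(X(u),v\bigr)\id+\bup(u,v)X=X\,\hup(u,v)$, so it is $\KK$-linear in the first variable; $\hup(v,u)=\overline{\hup(u,v)}$ follows from the symmetry of $\bup$ and the skewness of $X$, so it is conjugate-symmetric (hence conjugate-linear in the second variable, and $\hup(u,u)=\bup(u,u)\id$ lies in the fixed subalgebra $\FF\id$); and $\hup(u,v)=0$ for all $v$ forces $\bup(u,v)=0=\bup\bigl(X(u),v\bigr)$ for all $v$, whence $u=0$, giving nondegeneracy. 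All these identities are stable under scalar extension, and the form $\hup_R$ displayed in the statement is precisely $\hup\otimes_\FF R$.

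It then remains to compare $R$-points. Since $\KK\otimes_\FF R=R\,\id\oplus R\,X_R$ acts on $V_R$, an $R$-linear endomorphism of $V_R$ is $(\KK\otimes_\FF R)$-linear if and only if it commutes with $X_R$, so by the first assertion $\End_{\KK\otimes_\FF R}(V_R)\cap\GLs(V)(R)=\Cs_{\GLs(V)}(X)(R)=\AAut(V,X)(R)$. If $\varphi\in\AAut(V,X,\bup)(R)$ then $\varphi$ commutes with $X_R$ and preserves $\bup_R$, so
\begin{align*}
\hup_R\bigl(\varphi(u),\varphi(v)\bigr)
&=\bup_R\bigl(\varphi(u),\varphi(v)\bigr)\id-\bup_R\bigl(\varphi(X_Ru),\varphi(v)\bigr)X_R\\
&=\bup_R(u,v)\id-\bup_R(X_Ru,v)X_R=\hup_R(u,v),
\end{align*}
that is, $\varphi\in\Us(V,\hup)(R)$. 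Conversely, if $\varphi\in\Us(V,\hup)(R)$ then $\varphi$ is $(\KK\otimes_\FF R)$-linear, hence commutes with $X_R$, and it is invertible because it preserves the nondegenerate form $\hup_R$; expanding $\hup_R\bigl(\varphi(u),\varphi(v)\bigr)=\hup_R(u,v)$ in the $R$-basis $\{\id,X_R\}$ of $\KK\otimes_\FF R$ and comparing $\id$-components yields $\bup_R\bigl(\varphi(u),\varphi(v)\bigr)=\bup_R(u,v)$, so $\varphi\in\GLs(V)(R)\cap\Cs_{\GLs(V)}(X)(R)\cap\Os(V,\bup)(R)=\AAut(V,X,\bup)(R)$. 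These identifications being natural in $R$, one gets $\AAut(V,X,\bup)=\Us(V,\hup)$ as affine group schemes.

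The argument is entirely formal; the only points requiring a little care are fixing a consistent sesquilinearity convention for $\hup$, checking that the equivalences "commutes with $X$" $\Leftrightarrow$ "$(\KK\otimes_\FF R)$-linear" and "preserves $\bup$" $\Leftrightarrow$ "preserves $\hup$" hold over an arbitrary base $R$ rather than only over $\FF$, and invoking the standard fact that an endomorphism preserving a nondegenerate hermitian form on a finitely generated projective module is automatically invertible.
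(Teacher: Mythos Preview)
Your proof is correct and follows essentially the same approach as the paper: the first item is dismissed as a tautology, and the second is obtained by observing that $\hup$ is hermitian and that preserving $\hup_R$ is equivalent, component by component in the $R$-basis $\{\id,X_R\}$ of $\KK\otimes_\FF R$, to preserving $\bup_R$ together with the relation $\bup_R\bigl(X_R\varphi(u),\varphi(v)\bigr)=\bup_R\bigl(X_R u,v\bigr)$, the latter being automatic once $\varphi$ commutes with $X_R$. You are simply more explicit than the paper in verifying the hermitian axioms, checking nondegeneracy, and noting that an $\hup_R$-isometry of a finitely generated projective module is automatically invertible.
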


\begin{proof}
The fact that $\hup$ is hermitian (i.e., $\hup$ is $\FF$-bilinear, 
$\hup\bigl(X(u),v\bigr)=X\hup(u,v)$, and $\hup(u,v)=\overline{\hup(v,u)}$, for all $u,v\in V$, where 
$\overline{\phantom{a}}$ denotes the canonical involution of $\KK$) is clear.

Note that for $\varphi\in\End_{\KK\otimes_\FF R}(V_R)$ and 
$u,v\in V_R$, $\hup_R\bigl(\varphi(u),\varphi(v)\bigr)=\hup_R(u,v)$ 
if, and only if,
$\bup_R\bigl(\varphi(u),\varphi(v)\bigr)=\bup_R(u,v)$ and 
$\bup_R\bigl(X_R(\varphi(u)),\varphi(v)\bigr)
=\bup_R\bigl(X_R(u),v\bigr)$. 
Hence
$\varphi$ is an element of $\Us(V,\hup)(R)$ if and only if $\varphi$ is an $R$-point in
 the intersection of $\Cs_{\GLs(V)}(X)=\AAut(V,X)$ and of the orthogonal group scheme $\Os(V,\bup)$, whence the result.
\end{proof}

In case $-1\in\FF^2$, $\KK$ is split and Theorem \ref{th:2s1} gives:

\begin{corollary}\label{co:2s1}
Let $X:V\rightarrow V$ be a $1$-fold cross product on the 
even-dimensional vector space $V$, relative to a 
nondegenerate symmetric bilinear form $\bup$, over a field $\FF$ containing a square root $\bi$ of $-1$. Then, the following conditions hold:
\begin{itemize}
\item 
$\AAut(V,X)$ is isomorphic to $\GLs(V_+)\times \GLs(V_-)$, where $V_{\pm}=\{ v\in V\mid X(v)=\pm\bi v\}$.
\item
$\AAut(V,X,\bup)$ is isomorphic to $\GLs(V_+)$.
\end{itemize}
\end{corollary}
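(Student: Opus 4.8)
The plan is to deduce the corollary directly from Theorem \ref{th:2s1} by unwinding what the unitary group scheme $\Us(V,\hup)$ becomes when $\KK$ splits as $\FF\times\FF$. First I would observe that when $\bi\in\FF$, the idempotents $e_\pm=\tfrac12(\id\mp\bi X)\in\KK$ satisfy $e_++e_-=\id$, $e_+e_-=0$, and $e_\pm^2=e_\pm$, so $\KK\cong\FF e_+\times\FF e_-$, and the canonical involution $X\mapsto -X$ interchanges $e_+$ and $e_-$. Correspondingly $V=V_+\oplus V_-$ with $V_\pm=e_\pm V=\{v\mid X(v)=\pm\bi v\}$, and for any $R$ the module $V_R$ is a free $\KK\otimes_\FF R\cong R\times R$-module with $V_R=(V_+)_R\oplus(V_-)_R$. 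An $R$-linear endomorphism commuting with $X$ (equivalently, $\KK\otimes R$-linear) is exactly a pair $(\varphi_+,\varphi_-)$ with $\varphi_\pm\in\GLs(V_\pm)(R)$ acting on the respective summand; this already gives the first bullet, since $\AAut(V,X)=\Cs_{\GLs(V)}(X)$ by Theorem \ref{th:2s1} and the functor $R\mapsto\{(\varphi_+,\varphi_-)\}$ is represented by $\GLs(V_+)\times\GLs(V_-)$.

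Next I would handle the unitary group. Writing $\hup=\hup_+ e_+ +\hup_- e_-$ under $\KK=\FF e_+\times\FF e_-$, the hermitian condition $\hup(u,v)=\overline{\hup(v,u)}$ translates, since the involution swaps $e_\pm$, into $\hup_+(u,v)=\hup_-(v,u)$; so $\hup$ is determined by the single $\FF$-bilinear form $\hup_+$, and nondegeneracy of $\hup$ is nondegeneracy of $\hup_+$ together with the pairing it induces between $V_+$ and $V_-$. Concretely, the $e_+$-component of $\hup(u,v)=\bup(u,v)\id-\bup(X(u),v)X$ only pairs the $V_+$-part of $u$ against the $V_-$-part of $v$ (the other pairings vanish because $V_\pm$ are totally isotropic for $\bup$, as noted in the proof of Theorem \ref{th:CP}), giving a perfect pairing $V_+\times V_-\to\FF$. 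A $\KK\otimes R$-linear map $(\varphi_+,\varphi_-)$ preserves $\hup_R$ iff $\hup_{+,R}(\varphi_+ u,\varphi_- v)=\hup_{+,R}(u,v)$ for all $u\in(V_+)_R$, $v\in(V_-)_R$, i.e.\ iff $\varphi_-$ is the inverse adjoint of $\varphi_+$ with respect to this perfect pairing. Hence $\varphi_-$ is uniquely determined by $\varphi_+\in\GLs(V_+)(R)$, and the projection $(\varphi_+,\varphi_-)\mapsto\varphi_+$ is an isomorphism of group schemes $\Us(V,\hup)\xrightarrow{\ \sim\ }\GLs(V_+)$; combined with Theorem \ref{th:2s1} this gives the second bullet.

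The only point requiring a little care, and the natural place for an error to hide, is the bookkeeping identifying which components of $\hup$ survive: one must check that $V_+$ and $V_-$ are totally isotropic for $\bup$ — equivalently $\bup(V_\pm,V_\pm)=0$ — which follows because $X$ is a $\bup$-isometry with $X^*=-X$, so for $v,w\in V_+$ one has $\bup(v,w)=\bup(Xv,Xw)=\bup(\bi v,\bi w)=-\bup(v,w)$ and char$\,\FF\neq 2$ forces $\bup(v,w)=0$ (likewise for $V_-$), so that $\bup$ restricts to a perfect pairing between $V_+$ and $V_-$ and $\dim V_+=\dim V_-$. Once this is in place the two component forms $\hup_\pm$ are genuinely transposes of one another and everything is functorial in $R$, so no smoothness or flatness subtleties arise. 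I would keep the exposition short, citing Theorem \ref{th:2s1} and the decomposition $V=V_+\oplus V_-$ from the proof of Theorem \ref{th:CP}, and present the isomorphisms on $R$-points, which suffices since all the schemes involved are affine and the maps are manifestly natural transformations.
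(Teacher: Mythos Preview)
Your argument is correct and amounts to the same idea as the paper's: automorphisms of $(V,X)$ are precisely the maps preserving the eigenspace decomposition $V=V_+\oplus V_-$, and those that in addition preserve $\bup$ are determined by their action on $V_+$ because $V_+$ and $V_-$ are totally isotropic and perfectly paired by $\bup$. The paper's own proof says exactly this in two lines without routing through the hermitian form $\hup$ of Theorem~\ref{th:2s1}, whereas you unwind the unitary description explicitly; the extra detail you supply (idempotent decomposition of $\KK$, the check that $\bup(V_\pm,V_\pm)=0$, and the identification of $\varphi_-$ as the inverse adjoint of $\varphi_+$) is all sound and simply makes precise what the paper leaves to the reader.
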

\begin{proof}
Any automorphism of $(V,X)$ preserves $V_+$ and  $V_-$, and any automorphism of $(V,X,\bup)$ is determined by its action on $V_+$, because $V_+$ and $V_-$ are paired by $\bup$.
\end{proof}

\bigskip

\subsection{$n\geq 3$, $r=n-1$}\quad

Given a vector space $V$ endowed with a nondegenerate symmetric
bilinear form, we will consider the \emph{special orthogonal group
scheme} $\Os^+(V,\bup)$ and the group scheme $\widetilde\Os(V,\bup)$
whose $R$-points are those invertible linear automorphisms $\varphi$
of $V_R$ such that 
$\bup_R\bigl(\varphi(u),\varphi(v)\bigr)=\det(\varphi)\bup_R(u,v)$ for
all $u,v\in V_R$.

\begin{theorem}\label{th:n-1}
Let $X:V^{n-1}\rightarrow V$ be an ($n-1$)-fold cross product on the 
$n$-dimensional vector space $V$, relative to a 
nondegenerate symmetric bilinear form $\bup$.
Then the next two equations hold:
\begin{itemize}
\item 
$\AAut(V,X)=\widetilde\Os(V,\bup)$.
\item
$\AAut(V,X,\bup)=\Os^+(V,\bup)$.
\end{itemize}
\end{theorem}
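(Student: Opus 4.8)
The plan is to reduce everything to the explicit description of $X$ from Theorem \ref{th:CP}(ii), namely $X(v_1,\ldots,v_{n-1})={}^*(v_1\wedge\cdots\wedge v_{n-1})$, and to unwind the defining relation $\bup({}^*x,y)=\bup(x\wedge y,\omega)$. Because all the group schemes in sight are functorial, it suffices to argue for $R$-points for an arbitrary commutative $\FF$-algebra $R$; extending scalars, $X_R$ is still the star operator attached to $\bup_R$ and $\omega$ (the element $\omega\in\bigwedge^nV$ with $\bup(\omega,\omega)=1$ is defined over $\FF$, hence survives base change). So fix $R$ and $\varphi\in\GLs(V)(R)$.

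First I would compute how $\varphi$ interacts with the alternating $n$-form $\Phi(u_1,\ldots,u_n)=\bup(X(u_1,\ldots,u_{n-1}),u_n)=\bup(u_1\wedge\cdots\wedge u_n,\omega)$. Since $\bigwedge^nV_R$ is free of rank one, $\varphi$ acts on it by the scalar $\det(\varphi)$, so $\Phi(\varphi u_1,\ldots,\varphi u_n)=\det(\varphi)\,\Phi(u_1,\ldots,u_n)$. Now suppose $\varphi\in\AAut(V,X)(R)$, i.e.\ $\varphi X_R(u_1,\ldots,u_{n-1})=X_R(\varphi u_1,\ldots,\varphi u_{n-1})$ for all $u_i$. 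Pairing with $\varphi u_n$ and using the displayed identity gives $\bup_R(\varphi u_n,\varphi X_R(u_1,\ldots,u_{n-1}))=\det(\varphi)\bup_R(u_n,X_R(u_1,\ldots,u_{n-1}))$ for all $u_1,\ldots,u_n$. Since $X_R$ is surjective onto $V_R$ (already over $\FF$ one sees from \eqref{eq:Xsigma} that the image of $X$ spans $V$, and this persists after base change), and $u_n$ is arbitrary, this forces $\bup_R(\varphi w,\varphi v)=\det(\varphi)\bup_R(w,v)$ for all $v,w\in V_R$; that is, $\varphi\in\widetilde\Os(V,\bup)(R)$. Conversely, if $\varphi\in\widetilde\Os(V,\bup)(R)$, write $\lambda=\det(\varphi)$, so $\bup_R(\varphi u,\varphi v)=\lambda\bup_R(u,v)$; I would check directly from the two defining identities for $\ast$ that $\varphi\ast\varphi^{-1}$ equals $\ast$ on $\bigwedge^{n-1}V_R$: for $x\in\bigwedge^{n-1}V_R$ and $y\in V_R$, $\bup_R(\varphi\ast\varphi^{-1}x,\varphi y)=\lambda\bup_R(\ast\varphi^{-1}x,y)$ needs to be compared with $\lambda\bup_R(\varphi^{-1}x\wedge y,\omega)$; applying $\varphi$ throughout (which scales $\bigwedge^{n-1}$-pairings by $\lambda^{n-1}$ and the top pairing by $\lambda^n$, while $\det$ on $\bigwedge^nV_R$ is $\lambda$) one matches the powers of $\lambda$ and concludes $\varphi X_R(v_1,\ldots,v_{n-1})=X_R(\varphi v_1,\ldots,\varphi v_{n-1})$. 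This proves the first bullet.

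For the second bullet, by definition $\AAut(V,X,\bup)(R)=\AAut(V,X)(R)\cap\Os(V,\bup)(R)=\widetilde\Os(V,\bup)(R)\cap\Os(V,\bup)(R)$. If $\varphi$ lies in both, then $\bup_R(\varphi u,\varphi v)=\det(\varphi)\bup_R(u,v)$ and $\bup_R(\varphi u,\varphi v)=\bup_R(u,v)$; since $\bup_R$ is nondegenerate these give $(\det(\varphi)-1)\bup_R(u,v)=0$ for all $u,v$, hence $\det(\varphi)=1$, i.e.\ $\varphi\in\Os^+(V,\bup)(R)$. Conversely any $\varphi\in\Os^+(V,\bup)(R)$ is an isometry with $\det(\varphi)=1$, so it lies in $\widetilde\Os(V,\bup)(R)\cap\Os(V,\bup)(R)$. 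Since these identifications are natural in $R$, they are isomorphisms of group schemes.

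The only genuinely delicate point is keeping the bookkeeping of determinant scalars straight when transporting the relation $\bup(\ast x,y)=\bup(x\wedge y,\omega)$ through $\varphi$, together with making sure the surjectivity of $X_R$ (used to pass from the pairing identity to the bilinear-form identity) really does hold after arbitrary base change rather than only over a field. For the latter it is enough to observe that $X$ already maps an $\FF$-basis-indexed family of $(n-1)$-tuples onto an $\FF$-basis of $V$ by \eqref{eq:Xsigma}, so the induced map $\bigwedge^{n-1}V\to V$ is surjective, even a split surjection of free modules, and surjectivity is preserved by $-\otimes_\FF R$. Everything else is the routine multilinear-algebra verification sketched above.
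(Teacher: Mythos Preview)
Your proof is correct and follows essentially the same idea as the paper's: both pivot on the alternating $n$-form $\Phi(u_1,\ldots,u_n)=\bup\bigl(X(u_1,\ldots,u_{n-1}),u_n\bigr)$ and the identity $\Phi(\varphi u_1,\ldots,\varphi u_n)=\det(\varphi)\Phi(u_1,\ldots,u_n)$, then intersect with $\Os(V,\bup)$ for the second bullet. The only packaging difference is that the paper first passes to an orthonormal basis satisfying \eqref{eq:Xsigma} (after extending scalars) and checks the identity on basis vectors, whereas you work basis-free via the star operator and the observation that ${}^*:\bigwedge^{n-1}V\to V$ is a split surjection, which makes the functoriality in $R$ a bit more transparent; your converse bookkeeping with powers of $\lambda$ is slightly over-elaborate (the one-line check $x\wedge\varphi y=\det(\varphi)\,(\varphi^{-1}x\wedge y)$ already matches both sides), but the argument is sound.
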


\begin{proof}
Let $\varphi$ be a  linear automorphism of $V$. After extending scalars 
we may assume, as in the proof of Proposition \ref{pr:uniqueness_b}, 
that there exists a basis $\{v_1,\ldots,v_n\}$ satisfying Equation 
\eqref{eq:Xsigma}. Then $\varphi\in\Aut(V,X)$ if, and only if, we have
\[
X\bigl(\varphi(v_{\sigma(1)}),\ldots,\varphi(v_{\sigma(n-1)})\bigr)
=\varphi\bigl(X(v_{\sigma(1)},\dots,v_{\sigma(n-1)})\bigr)
=(-1)^\sigma\varphi(v_{\sigma(n)})
\]
for any permutation $\sigma$, and this happens, as $\bup$ is nondegenerate, if and only if we have 
\[
\begin{split}
\bup\bigl(\varphi(v_{\sigma(n-1)}),\varphi(v_{\sigma(i)})\bigr)
&=(-1)^\sigma\Phi\bigl(\varphi(v_{\sigma(1)}),\ldots,
\varphi(v_{\sigma(n-1)}),\varphi(v_{\sigma(i)})\bigr)\\
 &=(-1)^\sigma\det(\varphi)\Phi(v_{\sigma(1)},\ldots,
      v_{\sigma(n-1)},v_{\sigma(i)})\\
   &=\begin{cases}
       \det(\varphi)&\text{if $i=n$}\\ 0&\text{otherwise}
     \end{cases}\\
  &=\det(\varphi)\bup\bigl(v_{\sigma(n)},v_{\sigma(i)}\bigr)
\end{split}
\]
where $\Phi$ is the alternating multilinear form considered in the proof 
of Proposition \ref{pr:uniqueness_b}. We conclude that $\varphi$ is an automorphism of $(V,X)$ if and only if it satisfies 
\begin{equation}\label{eq:bphidet}
\bup\bigl(\varphi(u),\varphi(v)\bigr)=\det(\varphi)\bup(u,v)
\end{equation}
for all $u,v\in V$.

The argument above is functorial, so we conclude $\AAut(V,X)=
\widetilde\Os(V,\bup)$. Moreover, $\AAut(V,X,\bup)=\AAut(V,X)\cap
\Os(V,\bup)=\Os^+(V,\bup)$.
\end{proof}

Denote by $\bmu_m$ the scheme of $m$-th roots of unity.
A natural short exact sequence appears:

\begin{proposition}\label{pr:exact}
Let $X:V\rightarrow V$ be an ($n-1$)-fold cross product on the 
$n$-dimensional vector space $V$, relative to a 
nondegenerate symmetric bilinear form $\bup$. Then
the determinant provides a short exact sequence:
\[
1\longrightarrow \Os^+(V,\bup)\longrightarrow \widetilde\Os(V,\bup)
\stackrel{\det}
\longrightarrow \bmu_{n-2}\longrightarrow 1\,.
\]
\end{proposition}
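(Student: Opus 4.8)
The plan is to exhibit the short exact sequence by checking exactness at each of the three terms, working functorially over an arbitrary commutative associative unital $\FF$-algebra $R$. First, recall from Theorem \ref{th:n-1} that $\widetilde\Os(V,\bup)(R)$ consists of those $\varphi\in\GLs(V)(R)$ with $\bup_R(\varphi(u),\varphi(v))=\det(\varphi)\bup_R(u,v)$, and that $\Os^+(V,\bup)=\AAut(V,X,\bup)$ is the kernel of $\det$ restricted to $\widetilde\Os(V,\bup)$; this gives exactness at $\Os^+(V,\bup)$ (the inclusion is a closed immersion) and at $\widetilde\Os(V,\bup)$ for free. So the real content is (a) that $\det$ lands in $\bmu_{n-2}$, and (b) surjectivity of $\det:\widetilde\Os(V,\bup)\to\bmu_{n-2}$ as group schemes, i.e. as a faithfully flat morphism, equivalently that $\det$ is surjective on $\bar\FF$-points together with a dimension/smoothness bookkeeping, or more cleanly a functorial splitting argument.

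For (a): if $\varphi\in\widetilde\Os(V,\bup)(R)$, pick (after faithfully flat base change, or just argue with the Gram matrix) the defining relation $\bup_R(\varphi(u),\varphi(v))=\det(\varphi)\bup_R(u,v)$. Writing this in matrix form, if $B$ is the Gram matrix of $\bup$ and $M$ the matrix of $\varphi$, then $M^tBM=\det(M)B$; taking determinants gives $\det(M)^2\det(B)=\det(M)^n\det(B)$, and since $\bup$ is nondegenerate $\det(B)\in R^\times$, so $\det(M)^{n-2}=1$. Hence $\det$ factors through $\bmu_{n-2}$. This is the routine part.

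For (b), the surjectivity: I would produce, for each $R$ and each $\zeta\in\bmu_{n-2}(R)$, an explicit element of $\widetilde\Os(V,\bup)(R)$ of determinant $\zeta$. The natural candidate is a scalar homothety $\varphi=\lambda\,\id$ where $\lambda^{n-2}=\zeta$ — but $\lambda$ need not exist in $R$. The cleaner route is to note that $\widetilde\Os(V,\bup)$ contains the central subgroupscheme $\Gsm$ acting by homotheties only up to the caveat that $\det(\lambda\id)=\lambda^n$, which lands in $\bmu_{n-2}$ only if $\lambda^n$ is an $(n-2)$-root of unity — not identically. Instead, the honest argument: $\det$ is a homomorphism of smooth (the target $\bmu_{n-2}$ need not be smooth in bad characteristic, but that is allowed for the exactness of a sequence of affine group schemes) affine group schemes; to get faithful flatness it suffices to check surjectivity on points over an algebraic closure $\bar\FF$ and, separately, that the induced map on Lie algebras is surjective — or, most efficiently, to invoke \cite[Chapter VI]{Waterhouse} that a homomorphism of affine algebraic group schemes is faithfully flat onto its image, and then identify the image. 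Over $\bar\FF$, $\bmu_{n-2}(\bar\FF)$ is the group of $(n-2)$-th roots of unity and each such $\zeta$ is $\lambda^{n-2}$ for some $\lambda\in\bar\FF^\times$ with $\lambda^n\ne$ the right thing — wait, we need $\det(\varphi)=\zeta$ exactly, so take $\varphi=\lambda\id$ with $\lambda^n=\zeta$; then the scaling relation holds with scalar $\lambda^2$, and one needs $\lambda^2=\det(\varphi)=\lambda^n$, i.e. $\lambda^{n-2}=1$, forcing $\zeta=\lambda^n=\lambda^2$, which does hit all of $\bmu_{n-2}$ as $\lambda$ ranges over $\bmu_{2(n-2)}$-ish elements. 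Cleaner: take $\lambda$ with $\lambda^{n-2}=1$ and set $\varphi=\lambda\,\id$; then $M^tBM=\lambda^2 B=\det(\varphi)B$ since $\det(\varphi)=\lambda^n=\lambda^2$ (using $\lambda^{n-2}=1$), so $\varphi\in\widetilde\Os(V,\bup)$ with $\det(\varphi)=\lambda^2$. As $\lambda$ runs over $(n-2)$-th roots of unity, $\lambda^2$ runs over all of them when $n-2$ is odd, and over the squares when $n-2$ is even — so scalars alone do not suffice in the even case. To cover the remaining classes when $n-2$ is even, compose with a reflection: a hyperplane reflection $s$ relative to $\bup$ satisfies $\bup(s(u),s(v))=\bup(u,v)$ and $\det(s)=-1$, so $\lambda s\in\widetilde\Os(V,\bup)$ has determinant $-\lambda^2$, and $\{\pm\lambda^2:\lambda^{n-2}=1\}$ exhausts $\bmu_{n-2}(\bar\FF)$.

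\begin{proof}
Exactness at $\Os^+(V,\bup)$ and at $\widetilde\Os(V,\bup)$ is immediate from Theorem \ref{th:n-1}: $\Os^+(V,\bup)=\AAut(V,X,\bup)=\AAut(V,X)\cap\Os(V,\bup)$ is precisely the kernel of $\det$ on $\widetilde\Os(V,\bup)=\AAut(V,X)$, and the inclusion is a closed immersion.

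Let $R$ be a commutative associative unital $\FF$-algebra and $\varphi\in\widetilde\Os(V,\bup)(R)$. Fixing a basis of $V$, let $B$ be the (invertible) Gram matrix of $\bup$ and $M\in\GL(V_R)$ the matrix of $\varphi$. The relation $\bup_R(\varphi(u),\varphi(v))=\det(\varphi)\bup_R(u,v)$ reads $M^{t}BM=\det(M)B$. Taking determinants, $\det(M)^{2}\det(B)=\det(M)^{n}\det(B)$, and since $\det(B)\in R^{\times}$ we get $\det(M)^{n-2}=1$, i.e.\ $\det$ maps $\widetilde\Os(V,\bup)$ into $\bmu_{n-2}$.

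It remains to prove that $\det\colon\widetilde\Os(V,\bup)\to\bmu_{n-2}$ is faithfully flat. Since it is a homomorphism of affine algebraic group schemes over $\FF$, it is faithfully flat onto its image, so it suffices to show surjectivity on $\Falg$-points, where $\Falg$ is an algebraic closure of $\FF$. Let $\zeta\in\bmu_{n-2}(\Falg)$. Choose $\lambda\in\Falg^{\times}$ with $\lambda^{n-2}=1$; then $\varphi=\lambda\,\id_{V_{\Falg}}$ satisfies $M^{t}BM=\lambda^{2}B$ and $\det(\varphi)=\lambda^{n}=\lambda^{2}$ (using $\lambda^{n-2}=1$), so $\varphi\in\widetilde\Os(V,\bup)(\Falg)$ with $\det(\varphi)=\lambda^{2}$. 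If $n-2$ is odd, $\lambda\mapsto\lambda^{2}$ is a bijection of $\bmu_{n-2}(\Falg)$, so every $\zeta$ is attained. If $n-2$ is even, the map $\lambda\mapsto\lambda^{2}$ has image the set of squares in $\bmu_{n-2}(\Falg)$, of index two; to reach the non-squares, fix a nonisotropic vector $w\in V$ (which exists since $\bup\neq 0$) and let $s$ be the hyperplane reflection $s(x)=x-\frac{2\bup(x,w)}{\bup(w,w)}w$, which lies in $\Os(V,\bup)(\Falg)$ with $\det(s)=-1$. Then for $\lambda^{n-2}=1$ the element $\lambda s\in\widetilde\Os(V,\bup)(\Falg)$ has $\det(\lambda s)=-\lambda^{n}=-\lambda^{2}$, and the set $\{\pm\lambda^{2}:\lambda\in\bmu_{n-2}(\Falg)\}$ is all of $\bmu_{n-2}(\Falg)$. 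Hence $\det$ is surjective on $\Falg$-points and therefore faithfully flat, which establishes exactness at $\bmu_{n-2}$.
\end{proof}
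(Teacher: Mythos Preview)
There are two genuine gaps.

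The main one is in your reduction ``it suffices to show surjectivity on $\Falg$-points.'' This relies on the claim that a closed subgroup scheme of $\bmu_{n-2}$ containing all the $\Falg$-points must equal $\bmu_{n-2}$, which holds only when $\bmu_{n-2}$ is reduced, i.e.\ when $\chr\FF$ does not divide $n-2$. In characteristic $p\mid (n-2)$ the trivial subgroup of $\bmu_p$ already contains the unique $\Falg$-point, yet is not all of $\bmu_p$. Since the proposition is meant to hold in this case too (see Remark~\ref{re:smoothOtilde}, where $\widetilde\Os(V,\bup)$ itself is non-smooth precisely then), your argument is incomplete. The paper avoids this by proving fppf-surjectivity directly: for any $\FF$-algebra $R$ and any $r\in\bmu_{n-2}(R)$, it passes to the faithfully flat extension $S=R[T]/(T^2-r)$ and exhibits an explicit diagonal element of $\widetilde\Os(V,\bup)(S)$ (with eigenvalues $t,\ldots,t,t^{n-1}$ on a fixed orthogonal $\FF$-basis, $t$ the class of $T$) whose determinant is $r^{n-1}=r$.

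There is also an error in your $\Falg$-point computation. For $\lambda$ with $\lambda^{n-2}=1$ and $s$ a hyperplane reflection, $\lambda s$ does \emph{not} lie in $\widetilde\Os(V,\bup)(\Falg)$: one has $\bup(\lambda s(u),\lambda s(v))=\lambda^2\,\bup(u,v)$ while $\det(\lambda s)=-\lambda^n=-\lambda^2$, and $\lambda^2=-\lambda^2$ is impossible in characteristic $\neq 2$. The condition you actually need is $\lambda^{n-2}=-1$; then $\lambda s\in\widetilde\Os(V,\bup)(\Falg)$ with $\det(\lambda s)=\lambda^2$, and these values range exactly over the non-squares in $\bmu_{n-2}(\Falg)$, complementing the scalar case. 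With this fix surjectivity on $\Falg$-points does hold, but the first gap remains.
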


\begin{proof}
For any (unital, associative, commutative) $\FF$-algebra $R$ and any 
element
$\varphi\in\widetilde\Os(V,\bup)(R)$, Equation \eqref{eq:bphidet} shows that $\det(\varphi)^2=\det(\varphi)^n$, so that  
$\det(\varphi)$ is an ($n-2$)-th root of unity, thus showing that 
the exact sequence is well defined. Now, the only thing left is to 
prove that $\det$ gives a quotient map 
$\widetilde\Os(V,\bup)\rightarrow \bmu_{n-2}$.

Given any $\FF$-algebra $R$ and a root of unity $r\in\bmu_{n-2}(R)$, consider the degree two extension $S=R[T]/(T^2-r)$. Denote by $t$ the class of $T$ modulo $(T^2-r)$. 
The algebra $S$ is a free $R$-module of rank two, so $S/R$ is a faithfully flat extension.
Take an orthogonal basis $\{v_1,\ldots,v_n\}$ and consider the linear
automorphism $\varphi\in\GLs(V)(S)$ such that
\[
\varphi(v_i)=tv_i,\ 1\leq i\leq n;\quad \varphi(v_n)=t^{n-1}v_n.
\]
Note that $t^2=r=r^{n-1}=(t^{n-1})^2$, so that $\varphi$ lies in 
$\widetilde\Os(V,\bup)(S)$, and $\det(\varphi)=(t^{n-1})^2=r$.

This shows that $\det:\widetilde\Os(V,\bup)\rightarrow \bmu_{n-2}$
is a quotient map.
\end{proof}

In particular, for $n=3$, $\AAut(V,X)=\AAut(V,X,\bup)=\Os^+(V,\bup)$.

\begin{remark}\label{re:smoothOtilde}
As $\Os(V,\bup)$ is smooth, we get that $\widetilde\Os(V,\bup)$ is smooth if and only if so is $\bmu_{n-2}$ (see, e.g., \cite[(22.12)]{KMRT}). Therefore we obtain the following:
\begin{center}
\emph{
$\widetilde\Os(V,\bup)$ is smooth if and only if the characteristic of 
$\FF$ does not divide $n-2$.}
\end{center}
The Lie algebra of $\widetilde\Os(V,\bup)$ is 
\[
\Lie\bigl(\widetilde\Os(V,\bup)\bigr)=
\{ f\in\End_\FF(V)\mid \bup\bigl(f(u),v\bigr)+\bup\bigl(u,f(v)\bigr)=
\tr(f)\bup(u,v)\ \forall u,v\in V\}.
\]
Hence, for any $f\in \Lie\bigl(\widetilde\Os(V,\bup)\bigr)$,
$2\tr(f)=n\tr(f)$, so that $(n-2)\tr(f)=0$ and we get:
\[
\Lie\bigl(\widetilde\Os(V,\bup)\bigr)=\begin{cases}
\frso(V,\bup)&\text{if $\chr\FF$ does not divide $n-2$ (smooth 
case),}\\
\frso(V,\bup)\oplus\FF\id&\text{otherwise,}
\end{cases}
\]
where $\frso(V,\bup)$ denotes the orthogonal Lie algebra, i.e., the Lie algebra of skew-symmetric endomorphisms of $V$ relative to $\bup$.
\end{remark}

\bigskip

\subsection{$n=7$, $r=2$}\quad

If $(V,X)$ is a $2$-fold cross product on a seven-dimensional vector 
space, then $(V,X)$ is isomorphic to $(\cC_0,X^{\cC_0})$ for a Cayley 
algebra $\cC$ (Theorem \ref{th:CP}). Moreover (Remark
\ref{re:uniqueness_b}), the bilinear form is uniquely determined in 
this case. 

The restriction map
\[
\begin{split}
\AAut(\cC)&\longrightarrow \AAut(\cC_0,X^{\cC_0})\\
 f\ &\mapsto\quad f\vert_{\cC_0}
\end{split}
\]
is an isomorphism of affine group schemes (see, e.g., \cite{CastilloElduque}), and this gives the next result (see \cite{SpringerVeldkamp}).

\begin{theorem}\label{th:n7r2}
Let $(V,X,\bup)$ be a $2$-fold cross product on a seven-dimensional vector space. Then $\AAut(V,X)=\AAut(V,X,\bup)$ is a simple algebraic group of type $G_2$.
\end{theorem}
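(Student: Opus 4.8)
The plan is to reduce the statement to the classical fact that the automorphism group scheme of a Cayley algebra is a simple algebraic group of type $G_2$, and to verify that the restriction map $\AAut(\cC)\to\AAut(\cC_0,X^{\cC_0})$ is an isomorphism of affine group schemes. By Theorem \ref{th:CP}, any $2$-fold cross product on a seven-dimensional space is isomorphic to $(\cC_0,X^{\cC_0})$ for some Cayley algebra $\cC$, and by Remark \ref{re:uniqueness_b} the form $\bnup$ is recovered from $X^{\cC_0}$ via \eqref{eq:C0xy}; hence $\AAut(V,X)=\AAut(V,X,\bup)$ automatically, and it suffices to identify this group scheme.

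First I would recall that every automorphism of $\cC$ fixes the unit $1$ and preserves the norm $\nup$, hence preserves $\cC_0=1^\perp$ and commutes with the projection onto $\cC_0$; combined with \eqref{eq:C0xy}, an algebra automorphism $f$ of $\cC$ restricts to a linear automorphism of $\cC_0$ satisfying $f(x\times y)=f(x)\times f(y)$, i.e. an element of $\Aut(\cC_0,X^{\cC_0})$. Conversely, given $g\in\Aut(\cC_0,X^{\cC_0})$, one extends it to $\cC=\FF 1\oplus\cC_0$ by $g(1)=1$; using \eqref{eq:C0xy} again together with the fact that $g$ preserves $\bnup$ (which, as noted, is determined by $X^{\cC_0}$), one checks that the extended map respects the full product $xy$ on $\cC$, so it is an algebra automorphism. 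This argument is entirely functorial — it works verbatim after base change to any unital commutative associative $R$ — so it shows that the restriction morphism $\AAut(\cC)\to\AAut(\cC_0,X^{\cC_0})$ is an isomorphism of affine group schemes, as asserted (and as recorded in \cite{CastilloElduque}).

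Then I would invoke the classical structure theory: over a field of characteristic not two, $\AAut(\cC)$ is a smooth connected simple affine algebraic group of type $G_2$ (see \cite{SpringerVeldkamp}, or \cite[Chapter VIII]{KMRT}). Transporting this along the isomorphism of the previous paragraph gives that $\AAut(V,X)=\AAut(V,X,\bup)$ is a simple algebraic group of type $G_2$, completing the proof.

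The main obstacle is the surjectivity half of the restriction isomorphism, namely checking that an arbitrary $g\in\Aut(\cC_0,X^{\cC_0})$ really does extend to an algebra automorphism of $\cC$ rather than merely to a linear map respecting $\times$ and $\bnup$. The point to be careful about is that the full product on $\cC_0$ is $xy=-\bnup(x,y)1+x\times y$, so preservation of $xy$ for $x,y\in\cC_0$ follows from preservation of $\bnup$ and of $\times$; the products $1\cdot z$ and $z\cdot 1$ are handled by $g(1)=1$. Since $\bnup$ is not an independent datum but is forced by $X^{\cC_0}$ through \eqref{eq:C0xy} (equivalently, through \eqref{eq:C0xyy}), this preservation is automatic, and there is nothing further to check; this is exactly why the reference \cite{CastilloElduque} can be cited for the isomorphism of group schemes.
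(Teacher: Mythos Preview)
Your proposal is correct and follows precisely the paper's approach: reduce to $(\cC_0,X^{\cC_0})$ via Theorem~\ref{th:CP}, note that $\bup$ is determined by $X$ (Remark~\ref{re:uniqueness_b}) so that $\AAut(V,X)=\AAut(V,X,\bup)$, use that the restriction $\AAut(\cC)\to\AAut(\cC_0,X^{\cC_0})$ is an isomorphism of affine group schemes, and conclude by the classical identification of $\AAut(\cC)$ with a simple group of type $G_2$. The only difference is that where the paper simply cites \cite{CastilloElduque} for the restriction isomorphism and \cite{SpringerVeldkamp} for the $G_2$ identification, you supply the functorial details of the inverse (extend by $g(1)=1$ and use \eqref{eq:C0xy} together with the automatic preservation of $\bnup$); this is a welcome elaboration but not a different argument.
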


\bigskip

\subsection{$n=8$, $r=3$}\quad

If $(V,X)$ is a $3$-fold cross product on an eight-dimensional
vector space $V$, then by Theorem \ref{th:CP}, $(V,X)$ is isomorphic 
to $(\cC,\alpha X_1^\cC)$, for a Cayley algebra $\cC$ and a nonzero
scalar $\alpha\in\FF$, where $X_1^\cC$ is given by Equation 
\eqref{eq:typeI}. But $\AAut(V,X)=\AAut(V,\alpha X)$ for any nonzero 
$\alpha$, so it is enough to study the group scheme 
$\AAut(\cC,X_1^\cC)$. By Proposition \ref{pr:uniqueness_b}, this is
the same as $\AAut(\cC,X_1^\cC,\bnup)$.

Consider, as in \cite{Shaw1} or \cite{Korean} the triple product on 
the Cayley algebra $\cC$ given by
\begin{equation}\label{eq:3C}
\{xyz\}=(x\bar y)z
\end{equation}
that satisfies the equation
\begin{equation}\label{eq:3Cnorm}
\{xxy\}=\nup(x)y=\{yxx\}.
\end{equation} 
This is called a \emph{$3C$-product}, and the pair $(\cC,\{...\})$ a 
\emph{$3C$-algebra}.

Because of \eqref{eq:3Cnorm}, the norm of $\cC$ is determined by 
the $3C$-product, and hence we have the equality 
$\AAut(\cC,\{...\})=\AAut(\cC,\{...\},\nup)$. Moreover, Equation 
\eqref{eq:typeI} and Proposition \ref{pr:uniqueness_b} give:
\begin{equation}\label{eq:autos_pair83}
\AAut(\cC,X_1^\cC)=\AAut(\cC,X_1^\cC,\bnup)=
\AAut(\cC,\{...\})=\AAut(\cC,\{...\},\nup)
\end{equation}
and hence it is enough to compute $\AAut(\cC,\{...\})$. The group of 
rational points has been computed in \cite[Proposition 7]{Korean} and 
shown to be isomorphic to the spin group of $(\cC_0,-\nup)$. Here 
we will extend the results in \cite{Korean} to the group scheme 
setting, and will relate them to the triality phenomenon.

\smallskip

To begin with, let $\cC$ be a Cayley algebra with norm $\nup$, and
consider the \emph{para-Cayley} product 
$x\bullet y\bydef \bar x\bar y$. Let $l_x:y\mapsto x\bullet y$ and 
$r_x:y\mapsto y\bullet x$ be the operators of left and right multiplication by the element $x$ in the \emph{para-Cayley algebra}
$(\cC,\bullet)$. Consider, following \cite[\S 35]{KMRT} and 
\cite[\S 5.1]{EKmon}, the
linear map:
\[
\begin{split}
\cC&\longrightarrow \End_\FF(\cC\oplus\cC)\\
x\,&\mapsto\ \begin{pmatrix} 0&l_x\\ r_x&0\end{pmatrix}.
\end{split}
\]
For any $x,y\in \cC$, $r_xl_x(y)=(\overline{\bar x\bar y})\bar x=
(yx)\bar x=\nup(x)y=l_xr_x(y)$. Hence we have
\[
\begin{pmatrix} 0&l_x\\ r_x&0\end{pmatrix}^2=\nup(x)\id ,
\]
and the above linear map induces an isomorphism
\begin{equation}\label{eq:Phi}
\Phi:\Cl(\cC,\nup)\longrightarrow \End_\FF(\cC\oplus\cC) ,
\end{equation}
which restricts to an isomorphism of the even Clifford algebra 
$\Cl(\cC,\nup)\subo$ onto the diagonal subalgebra 
$\End_\FF(\cC)\times\End_\FF(\cC)$. Under this isomorphism, the 
\emph{canonical involution} $\tau$ of $\Cl(\cC,\nup)$: $\tau(x)=x$
for all $x\in\cC$, corresponds to the orthogonal involution 
$\sigma_{\nup\perp\nup}$ associated to the quadratic form 
$\nup\perp\nup$ on $\cC\oplus\cC$.

To prevent confusions between the unity of the Clifford algebra
$\Cl(\cC,\nup)$ and the unity of the Cayley algebra $\cC$, we will
denote in what follows the unity of $\Cl(\cC,\nup)$ by $1$, and the 
unity of $\cC$ by $e_0$. Also, the multiplication in $\cC$ will be denoted by juxtaposition, as usual, and the one in $\Cl(\cC,\nup)$ by
a dot: $x\cdot y$.

The associated spin group is the group:
\[
\Spin(\cC,\nup)=\{u\in\Cl(\cC,\nup)\subo\mid u\cdot \tau(u)=1,\ 
u\cdot \cC\cdot u^{-1}=\cC\},
\]
The \emph{vector representation} of $\Spin(\cC,\nup)$ on $\cC$ works as follows. For any $x\in\cC$ and $u\in\Spin(\cC,\nup)$, 
$\chi_u(x)\bydef u\cdot x\cdot u^{-1}=u\cdot x\cdot \tau(u)$.

For any $u\in\Spin(\cC,\nup)$, $\Phi(u)$ lies in the even part of 
$\End_\FF(\cC\oplus\cC)$:
\[
\Phi(u)=\begin{pmatrix}\rho_u^-&0\\ 0&\rho_u^{+}\end{pmatrix},
\]
and as $u\cdot\tau(u)=1$, it follows that $\rho_u^+$ and $\rho_u^-$ are orthogonal transformations: $\rho_u^\pm\in\Ort(\cC,\nup)$.

The equality $u\cdot x=\chi_u(x)\cdot u$ transfers by $\Phi$ to
\[
\begin{pmatrix}\rho_u^-&0\\ 0&\rho_u^{+}\end{pmatrix}
\begin{pmatrix}0&l_x\\ r_x&0\end{pmatrix}=
\begin{pmatrix}0&l_{\chi_u(x)}\\ r_{\chi_u(x)}&0\end{pmatrix}
\begin{pmatrix}\rho_u^-&0\\ 0&\rho_u^{+}\end{pmatrix}.
\]
This gives,
\[
\rho_u^-(x\bullet y)=\chi_u(x)\bullet\rho_u^+(y),\quad
\rho_u^+(y\bullet x)=\rho_u^-(y)\bullet\chi_u(x),
\]
for all $u\in\Spin(\cC,\nup)$ and $x,y\in\cC$. It is easy to check 
(\cite[Lemma 5.4]{EKmon}) that there is a cyclic symmetry here:
if three isometries 
$f_i\in\Ort(\cC,\nup)$, $0\leq i\leq 2$ satisfy $f_0(x\bullet y)=f_1(x)\bullet f_2(y)$ for all $x,y\in\cC$, then also $f_1(x\bullet y)=f_2(x)\bullet f_0(y)$, and $f_2(x\bullet y)=f_0(x)\bullet f_1(y)$.

All the above arguments are functorial and give an isomorphism of
affine group schemes (see \cite[\S 35]{KMRT})
\begin{equation}\label{eq:Spin_related_triples}
\begin{split}
\Spins(\cC,\nup)&\cong
\{(f_0,f_1,f_2)\in\Os^+(\cC,\nup)^3\mid f_0(x\bullet y)=f_1(x)\bullet f_2(y)\ \forall x,y\in\cC\}\\
u\ &\mapsto \ (\chi_u,\rho_u^+,\rho_u^-).
\end{split}
\end{equation}

\smallskip

Recall that we denote by $e_0$ the unity of the Cayley algebra $\cC$.

\begin{proposition}\label{pr:SpinC0}
The above isomorphism of affine group schemes induces an 
isomorphism
\begin{multline*}
\Spins(\cC_0,-\nup)\cong\{(f_0,f_1,f_2)\in\Os^+(\cC,\nup)^3\mid\\
 f_0(x\bullet y)=f_1(x)\bullet f_2(y)\ \text{and $f_0(e_0)=e_0$}\
 \forall x,y\in\cC\}.
\end{multline*}
\end{proposition}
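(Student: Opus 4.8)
The plan is to leverage the isomorphism \eqref{eq:Spin_related_triples} and then identify, on both sides, the subgroup scheme corresponding to the extra condition $f_0(e_0)=e_0$. On the Clifford side, $\Spins(\cC_0,-\nup)$ embeds naturally into $\Spins(\cC,\nup)$: writing $\cC=\FF e_0\perp \cC_0$, the quadratic form $\nup$ restricts to $-(-\nup)$ on $\cC_0$ up to the scalar $\nup(e_0)=1$, so the even Clifford algebra $\Cl(\cC_0,-\nup)\subo$ sits inside $\Cl(\cC,\nup)\subo$ compatibly with the canonical involutions, and $\Spins(\cC_0,-\nup)$ consists of those $u\in\Spins(\cC,\nup)$ whose vector representation $\chi_u$ fixes $e_0$ (equivalently, $u$ lies in the even subalgebra generated by $\cC_0$). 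So the point to pin down is precisely: \emph{for $u\in\Spins(\cC,\nup)(R)$, the condition $\chi_u(e_0)=e_0$ holds if and only if $u\in\Spins(\cC_0,-\nup)(R)$.}

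Concretely, first I would recall the standard description (e.g.\ \cite[\S 35]{KMRT} or \cite[Chapter 5]{EKmon}) of the embedding $\Spins(W',q')\hookrightarrow \Spins(W,q)$ attached to a nondegenerate subspace $W'$ of an orthogonal space $(W,q)$ with $W=W'\perp \FF z$, $q(z)$ a square: the image is exactly the stabilizer in $\Spins(W,q)$ of the reflection $s_z$, equivalently the stabilizer of $z$ under the vector representation. Applying this with $W=\cC$, $W'=\cC_0$, $z=e_0$, and noting $\nup(e_0)=1$ and $\nup\vert_{\cC_0}=-(-\nup)$ (so $\Cl(\cC_0,\nup\vert_{\cC_0})=\Cl(\cC_0,-\nup)$ up to the usual rescaling that does not change the even part), we get that $u\mapsto \chi_u(e_0)=e_0$ cuts out $\Spins(\cC_0,-\nup)$ inside $\Spins(\cC,\nup)$ as closed subgroup schemes. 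Transporting this through \eqref{eq:Spin_related_triples}, in which $u\mapsto(\chi_u,\rho_u^+,\rho_u^-)$ and the first component is precisely the vector representation, yields the claimed isomorphism onto the triples with $f_0(e_0)=e_0$. Since all the maps involved are morphisms of affine group schemes and the condition ``$f_0(e_0)=e_0$'' is visibly a closed condition defining a subfunctor, the restriction of an isomorphism to corresponding closed subgroup schemes is again an isomorphism.

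The main obstacle is the bookkeeping needed to make the identification of the \emph{even} Clifford subalgebra precise and functorial rather than merely rational. One must check that $\Cl(\cC_0,-\nup)\subo$ maps isomorphically, via $\Phi$ of \eqref{eq:Phi} restricted appropriately, onto the subalgebra of $\End_\FF(\cC)\times\End_\FF(\cC)$ generated by the products $l_x r_y$, $r_x l_y$ with $x,y\in\cC_0$, and that under this identification the canonical involution $\tau$ still corresponds to $\sigma_{\nup\perp\nup}$ restricted there; then $\Spins(\cC_0,-\nup)(R)=\{u\in\Cl(\cC_0,-\nup)\subo\otimes R\mid u\cdot\tau(u)=1,\ u\cdot(\cC_0\otimes R)\cdot u^{-1}=\cC_0\otimes R\}$, and one verifies this last normalizing condition is equivalent, for $u\in\Spins(\cC,\nup)(R)$, to $\chi_u(e_0)=e_0$ (using $e_0=$ the unique, up to sign and scalars, vector orthogonal to $\cC_0$, together with $u\cdot\cC\cdot u^{-1}=\cC$). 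This is essentially the content of the classical ``$\mathrm{Spin}_7\subset \mathrm{Spin}_8$'' picture, so I expect no genuinely new difficulty, only care with scalar extensions.

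I would therefore conclude: by \cite[\S 35]{KMRT} (or \cite[\S 5.1]{EKmon}) the natural inclusion $\Cl(\cC_0,-\nup)\subo\hookrightarrow \Cl(\cC,\nup)\subo$ induces a closed embedding $\Spins(\cC_0,-\nup)\hookrightarrow \Spins(\cC,\nup)$ whose image is $\{u\in\Spins(\cC,\nup)\mid \chi_u(e_0)=e_0\}$; combining with \eqref{eq:Spin_related_triples} and the fact that the first component of the triple is $\chi_u$ gives exactly the asserted isomorphism.
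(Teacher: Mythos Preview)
Your approach is correct and is essentially the paper's: identify $\Spins(\cC_0,-\nup)$ with the stabilizer of $e_0$ under the vector representation inside $\Spins(\cC,\nup)$, then transport through \eqref{eq:Spin_related_triples}. The paper, however, does not invoke this as a ``standard description'' but carries it out explicitly. It defines $\Psi:\Cl(\cC_0,-\nup)\to\Cl(\cC,\nup)\subo$ by $x\mapsto e_0\cdot x$, checks $(e_0\cdot x)^{\cdot 2}=-\nup(e_0)\nup(x)1=-\nup(x)1$ (which is exactly why the form is $-\nup$), verifies $\Psi\tau'=\tau\Psi$ for the relevant involutions, shows that $\Psi\bigl(\Cl(\cC_0,-\nup)\subo\bigr)$ is the centralizer of $e_0$ in $\Cl(\cC,\nup)\subo$, and then checks in both directions that $\Psi$ restricts to an isomorphism of $\Spin(\cC_0,-\nup)$ onto $\{u\in\Spin(\cC,\nup)\mid u\cdot e_0=e_0\cdot u\}=\{u\mid\chi_u(e_0)=e_0\}$, noting that every step is functorial.

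Two points where your write-up is shaky. First, the sentence ``$\nup\vert_{\cC_0}=-(-\nup)$ (so $\Cl(\cC_0,\nup\vert_{\cC_0})=\Cl(\cC_0,-\nup)$ up to the usual rescaling that does not change the even part)'' is a non sequitur: $\Cl(\cC_0,\nup)$ and $\Cl(\cC_0,-\nup)$ are genuinely different algebras, and over a general field of characteristic $\neq 2$ so are their spin groups. The minus sign is forced by the computation $(e_0\cdot x)^{\cdot 2}=-\nup(x)1$, not by any rescaling. Second, the references you lean on (\cite[\S 35]{KMRT}, \cite[\S 5.1]{EKmon}) establish \eqref{eq:Spin_related_triples} but do not contain the precise stabilizer statement you need at the scheme level; so the ``bookkeeping'' you flag as the main obstacle \emph{is} the content of the proof, and the paper supplies it rather than citing it.
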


\begin{proof}
 Consider the linear map
\[
\begin{split}
\cC_0&\longrightarrow \Cl(\cC,\nup)\\
 x\ &\mapsto\quad e_0\cdot x ,
\end{split}
\]
whose image lies in the even part $\Cl(\cC,\nup)\subo$.
Since $(e_0\cdot x)^{\cdot 2}=e_0\cdot x\cdot e_0\cdot x
=-e_0^{\cdot 2}\cdot x^{\cdot 2}=-\nup(e_0)\nup(x)1=-\nup(x)1$, because $e_0$ and $x$ are orthogonal relative to $\nup$, 
this linear map induces an embedding
\[
\Psi:\Cl(\cC_0,-\nup)\longrightarrow \Cl(\cC,\nup),
\]
which, by dimension count, restricts to an isomorphism (also denoted
by $\Psi$):
\begin{equation}\label{eq:Psi}
\Psi:\Cl(\cC_0,-\nup)\longrightarrow \Cl(\cC,\nup)\subo .
\end{equation}
Recall that $\tau$ denotes the canonical involution on $\Cl(\cC,\nup)$.
 Denote by $\tau'$ the involution on $\Cl(\cC_0,-\nup)$ which is 
$-\id$ on $\cC_0$. That is, $\tau'$ is the composition of the canonical
 involution on $\Cl(\cC_0,-\nup)$ with the parity automorphism. The restriction of $\tau'$ to the even part equals the restriction of
 the canonical involution.

For all $x\in\cC_0$ we get
\[
\Psi\bigl(\tau'(x)\bigr)=-\Psi(x)=-e_0\cdot x=x\cdot e_0=
\tau(x)\cdot\tau(e_0)=\tau(e_0\cdot x)=\tau\bigl(\Psi(x)\bigr),
\]
so that we have
\begin{equation}\label{eq:Psi_tau}
\Psi\tau'=\tau\Psi.
\end{equation}
As the elements of $\cC_0$ anticommute with $e_0$ in $\Cl(\cC,\nup)$
we get
\[
\begin{split}
\Psi\bigl(\Cl(\cC_0,-\nup)\subo\bigr)
  &=\{a\in\Cl(\cC,\nup)\subo\mid e_0\cdot a=a\cdot e_0\},\\
\Psi\bigl(\Cl(\cC_0,-\nup)\subuno\bigr)
  &=\{a\in\Cl(\cC,\nup)\subo\mid e_0\cdot a=-a\cdot e_0\}.
\end{split}
\]

In particular, for any $u\in\Spin(\cC_0,-\nup)$, we have 
$\Psi(u)\cdot e_0=e_0\cdot \Psi(u)$. Hence, for any $x\in\cC_0$
we get
\[
\Psi(u\cdot x\cdot u^{-1})
=\Psi(u)\cdot e_0\cdot x\cdot \Psi(u)^{-1}
=e_0\cdot \Psi(u)\cdot x\cdot \Psi(u)^{-1},
\]
so that we obtain
\[
\Psi(u)\cdot x\cdot \Psi(u)^{-1}
=e_0\cdot \Psi(u\cdot x\cdot u^{-1})
=e_0\cdot e_0\cdot \chi_u(x)
=\chi_u(x)\in\cC_0,
\]
and we conclude that $\Psi(u)$ lies in $\Spin(\cC,\nup)$.

Conversely, for any $v\in\Spin(\cC,\nup)$ with $e_0\cdot v=v\cdot e_0$, $v$ is the image $v=\Psi(u)$ of some element $u\in\Cl(\cC_0,-\nup)\subo$. From \eqref{eq:Psi_tau} we get $u\cdot \tau'(u)=1$, and 
for any $x\in \cC_0$, 
\[
\Psi(u\cdot x\cdot u^{-1})=
 v\cdot e_0\cdot x\cdot v^{-1}=e_0\cdot \chi_v(x)
=\Psi\bigl(\chi_v(x)\bigr)\in\Psi(\cC_0),
\]
and we conclude that $u\cdot x\cdot u^{-1}$ lies in $\cC_0$, so that 
$u$ lies in $\Spin(\cC_0,-\nup)$.

Therefore $\Psi$ restricts to a group isomorphism
\[
\Spin(\cC_0,-\nup)\cong \Cent_{\Spin(\cC,\nup)}(e_0),
\]
from the spin group of $(\cC_0,-\nup)$ onto the centralizer in $\Spin(\cC,\nup)$ of $e_0$ under the vector representation.

But all these arguments are functorial, so actually $\Psi$ induces an
isomorphism of affine group schemes:
\[
\Spins(\cC_0,-\nup)\cong \{u\in\Spins(\cC,\nup)\mid u\cdot e_0=e_0\cdot u\}.
\]

Finally, if we compose this isomorphism with the one in Equation
\eqref{eq:Spin_related_triples} we obtain the isomorphism of affine group schemes:
\begin{equation}\label{eq:SpinC0_related}
\begin{split}
\Spins(\cC_0,-\nup)&\longrightarrow \{(f_0,f_1,f_2)\in\Os^+(\cC,\nup)^3
  \mid f_0(x\bullet y)=f_1(x)\bullet f_2(y)\\
 &\hspace*{2in} \text{and $f_0(e_0)=e_0$}
\ \forall x,y\in\cC\}\\
  u\ &\mapsto\quad 
\bigl(\chi_{\Psi(u)},\rho_{\Psi(u)}^+,\rho_{\Psi(u)}^-\bigr),
\end{split}
\end{equation}
as required.
\end{proof}

\begin{remark}\label{re:fe0}
If $\cC$ is a Cayley algebra, and $(f_0,f_1,f_2)$ is a triple of
 isometries satisfying $f_0(x\bullet y)=f_1(x)\bullet f_2(y)$ for all
 $x,y\in\cC$, then we also have $f_1(x\bullet y)=f_2(x)\bullet f_0(y)$.
If $f_0(e_0)=e_0$, then with $y=e_0$ above we get 
$f_1(\bar x)=\overline{f_2(x)}$ for all $x$, so that 
$f_1(x)=\overline{f_2(\bar x)}$. Then both $f_0$ and $f_1$ are determined by $f_2$.

Conversely, with $(f_0,f_1,f_2)$ as above, if $f_1(\bar x)=
\overline{f_2(x)}$ for all $x\in\cC$, we get
\[
f_1(\bar x)=f_1(x\bullet e_0)
=\begin{cases}
  f_2(x)\bullet f_0(e_0),\\
  \overline{f_2(x)}=f_2(x)\bullet e_0,
 \end{cases}
\]
and we conclude that $f_0(e_0)=e_0$. 

As both $f_0$ and $f_1$ are then determined by $f_2$,  projecting 
on the third component in \eqref{eq:SpinC0_related} gives an injective
homomorphism (i.e., a closed embedding):
\begin{equation}\label{eq:theta}
\begin{split}
\theta:\Spins(\cC_0,-\nup)&\longrightarrow \Os^+(\cC,\nup)\\
 u\ &\mapsto\ \rho_{\Psi(u)}^-.
\end{split}
\end{equation}
This is actually the spin representation of $\Spins(\cC_0,-\nup)$.
\end{remark}

Our goal now is to show that the image of the homomorphism
 $\theta$ in \eqref{eq:theta} is the automorphism group scheme
$\AAut(\cC,\{...\})=\AAut(\cC,X^\cC_1)$ (see \eqref{eq:autos_pair83}).

\begin{remark}\label{re:left_mult}
Composing the isomorphisms $\Phi$ in \eqref{eq:Phi} and $\Psi$ in \eqref{eq:Psi}, for any $x\in\cC_0$ we obtain:
\[
\Phi\Psi(x)=\Phi(e_0\cdot x)=
\begin{pmatrix} 0&l_{e_0}\\ r_{e_0}&0\end{pmatrix}
\begin{pmatrix} 0&l_x\\ r_x&0\end{pmatrix}
=\begin{pmatrix} l_{e_0}r_x&0\\ 0&r_{e_0}l_x\end{pmatrix}
=\begin{pmatrix} L_x&0\\ 0&R_x\end{pmatrix}
\]
where $L_x:y\mapsto xy$, $R_x:y\mapsto yx$, are the operators of 
left and right multiplication by $x$ in $\cC$. This follows from 
$l_{e_0}r_x(y)=l_{e_0}(\bar y\bar x)=\overline{\bar y\bar x}=xy$ and from $r_{e_0}l_x(y)=r_{e_0}(\bar x\bar y)=\overline{\bar x\bar y}
=yx$. 

Thus for $x_1,\ldots,x_{2s}\in\cC_0$, with 
$\prod_{i=1}^{2s}\nup(x_i)=1$, the image by $\theta$ in \eqref{eq:theta} of $u=x_1\cdots\cdot x_{2s}\in\Spin(\cC_0,\nup)$ is  $L_{x_1}\cdots L_{x_{2s}}$. These elements are shown in 
\cite[Theorem 10]{Korean} and, in a different way, in \cite[Corollary 2.5]{Eld00}, to exhaust the elements in $\Aut(\cC,\{...\})$.
\end{remark}

We are now ready for the computation of the affine group schemes in 
\eqref{eq:autos_pair83}. This result extends the results in \cite{Shaw2} and \cite{Korean}, where only the groups of rational points were computed. It also gives a new perspective on the automorphisms of $3$-fold cross products.

\begin{theorem}\label{th:autos_83}
The group scheme of automorphisms $\AAut(\cC,\{...\})$ is isomorphic to $\Spins(\cC_0,-\nup)$.
\end{theorem}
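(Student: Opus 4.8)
The plan is to show that the image of the closed embedding $\theta:\Spins(\cC_0,-\nup)\rightarrow\Os^+(\cC,\nup)$ from \eqref{eq:theta} coincides with $\AAut(\cC,\{...\})$, and then invoke \eqref{eq:autos_pair83}. I would work functorially: fix a unital commutative associative $\FF$-algebra $R$ and compare $R$-points.

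First I would establish the inclusion $\im\theta\subseteq\AAut(\cC,\{...\})$. Take $u\in\Spins(\cC_0,-\nup)(R)$ and let $(f_0,f_1,f_2)$ be the associated triple, so that $f_0(x\bullet y)=f_1(x)\bullet f_2(y)$ for all $x,y\in\cC_R$, $f_0(e_0)=e_0$, and (by Remark \ref{re:fe0}) $f_1(x)=\overline{f_2(\bar x)}$, $f_0(x)=\overline{f_1(\bar x)}$. The element $\theta(u)$ is $f_2$ (up to the identification $\rho_{\Psi(u)}^-=f_2$; one must keep track of which of $f_1,f_2$ is $\rho^-$, but this is fixed once and for all). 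I would rewrite the $3C$-product $\{xyz\}=(x\bar y)z$ in terms of the para-Cayley product $x\bullet y=\bar x\bar y$: since $\bar x\bar y=x\bullet y$ gives $xy=\overline{x\bullet y}=\bar x\bullet\bar y$, one gets $\{xyz\}=(x\bar y)z=(\bar x\bullet y)\bullet\bar z=\overline{(x\bar y)\bullet z}$, and after a short manipulation $\{xyz\}$ becomes an expression built purely from $\bullet$ and the involution $x\mapsto\bar x=x\bullet(\text{something})$ — in fact the cleanest route is to observe $\overline{\{xyz\}}=\bar z(\bar y x)$... I would just directly verify $f_2(\{xyz\})=\{f_2(x)f_2(y)f_2(z)\}$ using the triality relations $f_0(a\bullet b)=f_1(a)\bullet f_2(b)$, $f_1(a\bullet b)=f_2(a)\bullet f_0(b)$, $f_2(a\bullet b)=f_0(a)\bullet f_1(b)$ together with $f_i(\bar a)=\overline{f_{i\mp1}(a)}$. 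Each of the three factors in $(x\bar y)z$ gets relabelled by a different $f_i$ and the cyclic structure makes everything close up; this is the computation already implicit in Remark \ref{re:left_mult}, where $\theta(x_1\cdots x_{2s})=L_{x_1}\cdots L_{x_{2s}}$ and such products are known to lie in $\Aut(\cC,\{...\})$.

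Next, the reverse inclusion $\AAut(\cC,\{...\})(R)\subseteq\im\theta(R)$. Let $\varphi\in\AAut(\cC,\{...\})(R)$. By \eqref{eq:autos_pair83}, $\varphi$ preserves $\nup_R$, i.e.\ $\varphi\in\Ort(\cC,\nup)(R)$, and from $\{e_0\,x\,e_0\}=x$ (which follows from $\{xxy\}=\nup(x)y=\{yxx\}$ and bilinearity, using $\nup(e_0)=1$) applied to $\varphi$ one gets $\{\varphi(e_0)\,\varphi(x)\,\varphi(e_0)\}=\varphi(x)$ for all $x$; taking $x=e_0$ gives $(\varphi(e_0)\overline{\varphi(e_0)})\varphi(e_0)=\varphi(e_0)$, and since $\varphi(e_0)\overline{\varphi(e_0)}=\nup(\varphi(e_0))e_0=e_0$, this is automatic — so instead I would use $\{x\,e_0\,y\}=xy$ to recover that $\varphi$ (conjugated appropriately) respects the original Cayley product on the ``shifted'' algebra, or more efficiently use that $\varphi\in\Ort(\cC,\nup)(R)$ and that $\varphi$ normalizes things to land in $\SOrt$. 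The key structural input is the description, following \cite[\S35]{KMRT}, of $\SOrt(\cC,\nup)$ via triality: every $f_2\in\Os^+(\cC,\nup)(R)$ that preserves the $3C$-product extends to a triality triple $(f_0,f_1,f_2)$ with $f_0(e_0)=e_0$. Concretely, given $\varphi=f_2$ preserving $\{xyz\}=(x\bar y)z$, define $f_1(x):=\overline{f_2(\bar x)}$ and $f_0(x):=\overline{f_1(\bar x)}=f_2(x)$... no: $f_0$ must be the one fixing $e_0$. I would set $f_1(x)=\overline{f_2(\bar x)}$, $f_0(x)=\overline{f_1(\bar x)}$, check $f_0(e_0)=e_0$ using $\overline{e_0}=e_0$, and then check $f_0(x\bullet y)=f_1(x)\bullet f_2(y)$ — unwinding the definitions, this last identity is exactly equivalent to $f_2$ preserving $\{xyz\}$. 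By the cyclic-symmetry lemma \cite[Lemma 5.4]{EKmon} the full triple is then a triality triple, so by Proposition \ref{pr:SpinC0} and Remark \ref{re:fe0} it comes from a (necessarily unique) $u\in\Spins(\cC_0,-\nup)(R)$ with $\theta(u)=f_2=\varphi$. Since $\theta$ is a closed embedding, injectivity is free, so combining the two inclusions, $\theta$ is an isomorphism onto $\AAut(\cC,\{...\})$.

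The main obstacle, and the only place requiring genuine care rather than bookkeeping, is the translation ``$f_2$ preserves the $3C$-product $\{xyz\}=(x\bar y)z$'' $\Longleftrightarrow$ ``$(f_0,f_1,f_2)$ is a triality triple with $f_0(e_0)=e_0$,'' where $f_0,f_1$ are defined from $f_2$ via the canonical involution. Getting the indices right — which of the three components of the triple is $\rho_{\Psi(u)}^-$, hence which one is $\theta(u)$, and which involution twist ($f\mapsto\overline{f(\bar\cdot\,)}$ versus the bare conjugation) connects consecutive components — is the delicate point; a sign or a cyclic shift error here would break the argument. I expect this to reduce, after expanding $x\bullet y=\bar x\bar y$ and $\bar x=\nup(x,e_0)e_0-x$, to the single para-Cayley identity $f_0(x\bullet y)=f_1(x)\bullet f_2(y)$ rewritten three ways, which is precisely the content already used to produce \eqref{eq:Spin_related_triples}, so no new computation beyond careful substitution is needed. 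Everything else — that $\AAut(\cC,\{...\})\subseteq\Os^+(\cC,\nup)$, i.e.\ that such automorphisms have determinant one — follows because they preserve $\nup$ and, by Remark \ref{re:left_mult} plus Zariski density / the classification of the rational points in \cite{Korean}, lie in the connected component; alternatively it drops out of the triality description itself, since every $f_2$ extending to a triple automatically lies in $\Os^+$.
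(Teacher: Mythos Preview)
Your first inclusion, that $\theta$ lands in $\AAut(\cC,\{...\})$, is exactly what the paper does: the computation $f_2(\{xyz\})=\{f_2(x)f_2(y)f_2(z)\}$ from the triality relations together with $f_1(\bar x)=\overline{f_2(x)}$ is carried out explicitly there, so that part is fine.

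The reverse inclusion, however, has a genuine gap. You propose to define $f_1(x)=\overline{f_2(\bar x)}$ and $f_0(x)=\overline{f_1(\bar x)}$, but as you yourself notice mid-stream, this second formula collapses to $f_0=f_2$. You then proceed with the same formula anyway and claim that $f_0(e_0)=e_0$ follows from $\bar e_0=e_0$; it does not, because $f_0(e_0)=f_2(e_0)$ under your definition, and for a general $\varphi=f_2\in\Aut(\cC,\{...\})$ the element $f_2(e_0)$ can be any element of the unit sphere (this is exactly Lemma~\ref{le:orbits_83}(i)). So the triple $(f_0,f_1,f_2)$ you write down is not the one coming from Proposition~\ref{pr:SpinC0}, and the equivalence you assert between ``$f_2$ preserves $\{xyz\}$'' and ``$(f_0,f_1,f_2)$ is a triality triple with $f_0(e_0)=e_0$'' is not established. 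The missing idea is that $f_0$ is \emph{not} obtained from $f_2$ by a simple involution twist; it is determined only implicitly through the relation $f_0(x\bullet y)=f_1(x)\bullet f_2(y)$, and showing that this implicit definition produces a well-defined element of $\Os^+(\cC,\nup)(R)$ for an arbitrary $\FF$-algebra $R$ (not just a field) is real work that your outline does not address.

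The paper sidesteps this entirely. Rather than constructing a functorial inverse to $\theta$, it checks only that $\theta$ factors through $\AAut(\cC,\{...\})$ (your first inclusion) and then appeals to three external inputs: $\theta_{\overline\FF}$ is bijective on $\overline\FF$-points by \cite[Theorem 10]{Korean} (via Remark~\ref{re:left_mult}), the differential $\dc\theta$ is an isomorphism because $\Der(\cC,\{...\})\cong\frso(\cC_0,-\nup)$ by \cite[Theorem 12]{Korean} and a dimension count, and these two facts force $\theta$ to be an isomorphism of affine group schemes by the criterion \cite[Theorem A.50]{EKmon}. This avoids any need to produce the triality triple from $\varphi$ by hand and handles the scheme-theoretic (non-reduced, arbitrary $R$) issues automatically.
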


\begin{proof}
The homomorphism $\theta$ in \eqref{eq:theta} factors through
$\AAut(\cC,\{...\})$ because if a triple
$(f_0,f_1,f_2)\in\Os^+(\cC,\nup)^3(R)$, 
for an $\FF$-algebra $R$, satisfies  
$f_0(x\bullet y)=f_1(x)\bullet f_2(y)$ for all $x,y,z\in \cC_R$, and 
with $f_0(e_0)=e_0$, then we have $f_1(\bar x)=\overline{f_2(x)}$
and 
\[
\begin{split}
f_2(\{xyz\})&=f_2\bigl((x\bar y)z\bigr)\\
&=f_2\bigl((y\bar x)\bullet \bar z\bigr)=f_0(y\bar x)\bullet f_1(\bar z)\\
&=f_0\bigl(\bar y\bullet x\bigr)\bullet \overline{f_2(z)}\\
&=\bigl(f_1(\bar y)\bullet f_2(x)\bigr)\bullet \overline{f_2(z)}\\
&=\bigl(\overline{f_2(y)}\bullet f_2(x)\bigr)\bullet\overline{f_2(z)}\\
&=\bigl(\overline{f_2(y)\overline{f_2(x)}}\bigr)f_2(z)\\
&=\bigl(f_2(x)\overline{f_2(y)}\bigr)f_2(z)=\{f_2(x)f_2(y)f_2(z)\},
\end{split}
\]
and hence we get an injective morphism 
$\theta:\Spins(\cC_0,-\nup)\rightarrow \AAut(\cC,\{...\})$ (which we 
denote also by $\theta$).

Remark \ref{re:left_mult} and \cite[Theorem 10]{Korean} show that $\theta_{\overline{\FF}}$ is bijective, where $\overline{\FF}$ denotes an
algebraic closure of $\FF$. Also, the differential $\dc\theta$ is injective,  because $\theta$ in \eqref{eq:theta} is injective. But the Lie algebra $\Lie\bigl(\AAut(\cC,\{...\})\bigr)=\Der(\cC,\{...\})$ is isomorphic to the orthogonal Lie algebra $\frso(\cC_0,-\nup)$ (\cite[Theorem 12]{Korean}), so by dimension count $\dc\theta$ is an isomorphism 
$\Lie\bigl(\Spins(\cC_0,-\nup)\bigr)\rightarrow 
\Lie\bigl(\AAut(\cC,\{...\})\bigr)$. We conclude from \cite[Theorem A.50]{EKmon} that $\theta:\Spins(\cC_0,-\nup)\rightarrow \AAut(\cC,\{...\})$ is an isomorphism.
\end{proof}

\bigskip

%-------------------------

\section{Gradings}\label{se:gradings}

Let $G$ be an abelian group and let $(V,X)$ be an $r$-fold cross product. A 
\emph{$G$-grading} on $(V,X)$ is a vector space decomposition
$\Gamma: V=\bigoplus_{g\in G}V_g$, such that 
\[
X\bigl(V_{g_1},\ldots,V_{g_r}\bigr)\subseteq V_{g_1\cdots g_r}
\]
for all $g_1,\ldots,g_r\in G$. The subspaces $V_g$ are called the \emph{homogeneous components} and we write $\deg(v)=g$ (or 
$\deg_\Gamma(v)=g$) for any $0\neq v\in V_g$. The 
\emph{support} of $\Gamma$ is the (finite) subset 
$\supp\Gamma\bydef\{g\in G\mid V_g\neq 0\}$.

Two $G$-gradings $\Gamma:V=\bigoplus_{g\in G}V_g$ and 
$\Gamma':V=\bigoplus_{g\in G}V'_g$ on $(V,X)$  are \emph{isomorphic} if there is an automorphism $\varphi\in\Aut(V,X)$ such that $\varphi(V_g)=V'_g$ for all $g\in G$.

For the basic facts on gradings on algebras, the reader is referred to the monograph \cite{EKmon}. Here we will review the main facts, adapted to cross products.

\smallskip

Any $G$-grading $\Gamma$ on $(V,X)$ determines a homomorphism 
of affine group schemes 
\[
\rho_\Gamma: G\subdiag \longrightarrow \AAut(V,X)
\]
where $G\subdiag$ is the \emph{diagonalizable} affine group 
scheme represented by the group algebra $\FF G$. This is the Cartier dual of the constant group scheme $G$ (see \cite[\S 20]{KMRT}).

The homomorphism $\rho_\Gamma$ factors through the subgroup scheme $\AAut(V,X,\bup)$ if and only if $\bup(V_g,V_h)=0$ unless $gh=e$ (the neutral element of $G$). In this case, $\Gamma$ will be said to be \emph{compatible with $\bup$}.

Two $G$-gradings on $(V,X)$ are isomorphic if and only if the corresponding homomorphisms are conjugate by an element in $\Aut(V,X)$ \cite[Proposition 1.36]{EKmon}.

\smallskip

Conversely, any homomorphism $\rho:G\subdiag\rightarrow \AAut(V,X)$
determines a grading $\Gamma$ on $(V,X)$ as follows. Take the generic element (the identity map) $\id_{\FF G}\in G\subdiag(G)=\Hom(\FF G,\FF G)$ (algebra homomorphisms). The automorphism 
\[
\rho_{\FF G}(\id_{\FF G})\in\Aut(V_{\FF G},X_{\FF G})
\] 
will be called the
\emph{generic automorphism} attached to $\rho$, and 
$\Gamma: V=\bigoplus_{g\in G}V_g$ is given by:
\[
V_g=\{v\in V\mid \rho_{\FF G}(\id_{\FF G})(v\otimes 1)=v\otimes g\}.
\]
We will say that $\rho_{\FF G}(\id_{\FF G})$ is the \emph{generic
automorphism} of $\Gamma$ and will be denoted by 
$\varphi_\Gamma$.

\smallskip

Let $\Gamma: V=\bigoplus_{g\in G}V_g$ be a $G$-grading on the 
$r$-fold cross product $(V,X)$.
The \emph{diagonal group scheme} $\Diags(\Gamma)$ is the 
diagonalizable group scheme whose group of $R$-points, for any $\FF$-algebra $R$, consists of those automorphisms of the scalar extension 
$(V_R,X_R)$ that act by a scalar on each homogeneous component:
\[
\Diags(\Gamma)(R)=\{f\in\Aut(V_R,X_R)\mid
f\vert_{V_g\otimes_\FF R}\in R^\times\id_{V_g\otimes_\FF R}\}.
\]
Up to isomorphism, $\Diags(\Gamma)$ is $U\subdiag$ for a finitely generated abelian group $U$, which is called the \emph{universal grading group} of $\Gamma$.  The morphism $\rho_\Gamma$ factors through $\Diags(\Gamma)$ and hence induces a natural group
homomorphism $U\rightarrow G$. Moreover, $\Gamma$ may be considered, 
in a natural way, a grading by $U$, which is the most natural grading group for $\Gamma$.

\smallskip

Given a $G$-grading $\Gamma:V=\bigoplus_{g\in G}V_g$ and an $H$-grading 
$\Gamma':V=\bigoplus_{h\in H}V'_h$ on $(V,X)$, $\Gamma$ is said to 
be a \emph{refinement} of $\Gamma'$, or $\Gamma'$ a 
\emph{coarsening} of $\Gamma$, if for any $g\in G$ there is an 
element $h\in H$ such that $V_g\subseteq V'_h$.   In other words, 
each homogeneous component for $\Gamma'$ is a sum of 
homogeneous components for $\Gamma$. The refinement is said to be 
proper if we have that $V_g$ is strictly contained in $V'_h$ for at least 
one nonzero homogeneous component. In particular, given a $G$-grading $\Gamma:V=\bigoplus_{g\in G}V_g$ and a group homomorphism $\alpha:G\rightarrow H$, the grading
$\Gamma^\alpha:V=\bigoplus_{h\in H}V_h'$, where 
$V_h'=\bigoplus_{\alpha(g)=h}V_g$ for all $h\in H$, is a coarsening of $\Gamma$. Any homogeneous element for $\Gamma$ of degree $g$ is homogeneous too for $\Gamma^\alpha$ of degree $\alpha(g)$.

A grading $\Gamma$ is said to be \emph{fine} if it admits no proper refinement. In that case, $\Diags(\Gamma)$ is a maximal diagonalizable subgroup scheme of $\AAut(V,X)$.  

Thus the classification of fine gradings \emph{up to equivalence} is equivalent to the classification of maximal diagonalizable subgroup schemes of 
$\AAut(V,X)$ up to conjugation by elements in $\Aut(V,X)$.

Here two gradings  $\Gamma:V=\bigoplus_{g\in G}V_g$ and 
$\Gamma':V=\bigoplus_{h\in H}V'_h$ on $(V,X)$  are \emph{equivalent} if there is an automorphism $\varphi\in\Aut(V,X)$ such that for any $g\in G$ there is $h\in H$ such that $\varphi(V_g)=V'_h$. 

Equivalence is weaker than isomorphism. For equivalence, the diagonal group schemes are imposed to be conjugate by an automorphism of 
$(V,X)$, while for isomorphism, we require that the morphisms of affine group schemes $\rho_\Gamma$  are conjugate by an automorphism.

Given a grading $\Gamma: V=\bigoplus_{g\in G}V_g$, the automorphism group of $\Gamma$, $\Aut(\Gamma)$ is the group of
 self-equivalences, that is, of automorphisms of $(V,X)$ that permute
 the homogeneous components. The stabilizer of $\Gamma$: 
$\Stab(\Gamma)$, is the kernel of the natural induced map 
$\Aut(\Gamma)\rightarrow \Sym\bigl(\supp(\Gamma)\bigr)$, where 
$\Sym(S)$ denotes the symmetric group on the set $S$. That is, 
$\Stab(\Gamma)$ is the subgroup of self-isomorphisms of $\Gamma$.
 The quotient $W(\Gamma)\bydef\Aut(\Gamma)/\Stab(\Gamma)$ is called the \emph{Weyl group}.

\medskip

In this section, \emph{the ground field $\FF$ will be assumed to be algebraically closed} (and, as always, of characteristic not two), 
unless otherwise stated. 
All gradings by abelian groups on cross products will be classified up to isomorphism. The fine gradings will be classified up to equivalence too, thus obtaining the maximal diagonalizable subgroup schemes of the corresponding automorphism group schemes.

\bigskip

\subsection{$n$ even, $r=1$}\quad 

Let $X:V\rightarrow V$ be a $1$-fold cross product on the even-dimensional vector space $V$ over an algebraically closed field $\FF$.

Let $G$ be an abelian group, a $G$-grading 
$\Gamma: V=\bigoplus_{g\in G}V_g$ on $(V,X)$ is nothing else than a vector space decomposition of $V$ on subspaces invariant under the endomorphism $X:V\rightarrow V$. 

Hence, if $V_+$ and $V_-$ are the subspaces defined in Corollary \ref{co:2s1}, a $G$-grading on $(V,X)$ determines $G$-gradings on 
the vector subspaces $V_+$ and $V_-$ and, conversely, any pair of 
$G$-gradings on $V_+$ and $V_-$ (as vector spaces) determines a $G$-grading on $(V,X)$.  

In particular, up to equivalence, there is a unique fine grading $\Gamma$, with universal group $\ZZ^n$, $n=2s$, obtained by fixing bases $\{v_1,\ldots, v_s\}$, $\{w_1,\ldots,w_s\}$, of $V_+$ and $V_-$, and assigning
degrees as follows:
\[
\deg(v_i)=\epsilon_i,\quad \deg(w_i)=\epsilon_{s+i},\quad 
i=1,\ldots, s,
\]
where $\epsilon_1,\ldots,\epsilon_n$ is the canonical basis of $\ZZ^n$.
The Weyl group $W(\Gamma)$ is $\Sym_s\times\Sym_s$, where 
$\Sym_s$ is the symmetric group on $s$ symbols, because any self-equivalence permutes the homogeneous components of $V_+$ and $V_-$ separately.

\begin{remark}\label{re:gr_2s1}
The above is valid for arbitrary fields $\FF$ with $-1\in\FF^2$, and implies that a fine grading on $(V,X)$ is given by a pair of fine gradings on $V_+$ and $V_-$ as vector spaces. In particular, there is a unique fine grading up to equivalence, and the corresponding maximal diagonalizable subgroup scheme is just a maximal torus of 
$\AAut(V,X)=\GLs(V_+)\times\GLs(V_-)$. (All maximal tori are conjugate.)

If $-1\not\in\FF^2$, then $\KK=\FF\id\oplus\FF X$ is a field. Hence any homogeneous component of a grading $\Gamma:V=\bigoplus_{g\in G}V_g$ on $(V,X)$ is a vector space over $\KK$, so $\dim_\FF V_g$ is even for any $g\in G$. In particular, $\Gamma$ is a coarsening of a grading by $\ZZ^s$ ($n=2s$) obtained by taking a $\KK$-basis $\{v_1,\ldots,v_s\}$ of $V$ and declaring that $\deg(v_i)=\epsilon_i$, where the $\epsilon_i$'s are the canonical generators of $\ZZ^s$.
Hence again there is a unique fine grading, up to equivalence, and the corresponding diagonalizable group scheme is a maximal split torus of $\AAut(V,X)=\Cs_{\GLs(V)}(X)$ (see Theorem \ref{th:2s1}).
\end{remark}

\bigskip

\subsection{$n\geq 3$, $r=n-1$}\quad

Let $(V,X)$ be an $r$-fold cross product with $\dim_\FF V=n\geq 3$ and $r=n-1$, over an algebraically closed field $\FF$, relative to a 
nondegenerate symmetric bilinear form $\bup$, unique up to a scalar (Proposition \ref{pr:uniqueness_b}). Equation \eqref{eq:Xsigma} shows that $X$ is unique, up to isomorphism.

Let $G$ be an abelian group and let $\Gamma:V=\bigoplus_{g\in G}V_g$ be a $G$-grading on $(V,X)$. The generic automorphism
$\varphi_\Gamma$ is in $\AAut(V,X)(\FF G)=\widetilde\Os(V,\bup)(\FF G)$.

Therefore, for any $g_1,g_2\in G$ and $v_1\in V_{g_1}$,
$v_2\in V_{g_2}$, on the group algebra $\FF G$ we have
\[
\bup(v_1,v_2)g_1g_2=\bup\bigl(v_1\otimes g_1,v_2\otimes g_2\bigr)
=\det(\varphi_\Gamma)\bup(v_1,v_2),
\]
and hence we get
\begin{equation}\label{eq:orthogonality}
\bup(V_{g_1},V_{g_2})=0\quad \text{unless
$g_1g_2=\det(\varphi_\Gamma)$.}
\end{equation}
Write  $h=\det(\varphi_\Gamma)$ and
consider the map
\[
\begin{split}
\delta: G&\longrightarrow \ZZ_{\geq 0}\\
 g\,&\mapsto \delta(g)\bydef \dim_\FF V_g,
\end{split}
\]
which satisfies the following restrictions:
\begin{itemize}
\item $\sum_{g\in G}\delta(g)=n$,
\item $h=\det(\varphi_\Gamma)=\prod_{g\in G}g^{\delta(g)}$,
\item $\delta(g)=\delta(g^{-1}h)$ for all $g\in G$. This is a consequence of \eqref{eq:orthogonality}.
\end{itemize}
By Proposition \ref{pr:exact}, the determinant 
$h=\det(\varphi_\Gamma)$ satisfies $h^{n-2}=e$. ($e$ denotes the 
neutral element of $G$.) 

Note that $h$ is the neutral element if and only if $\Gamma$ is compatible with $\bup$. Also, because of \eqref{eq:orthogonality}, for an element $g\in\supp\Gamma$, the restriction $\bup\vert_{V_g}$ is nondegenerate if and only if $g^2=h$, otherwise $V_g$ is totally isotropic.

Conversely, given a map $\delta$ satisfying the restrictions above, we can always define a $G$-grading on $(V,X)$. For example, let $n=5$, $g_1,g_2,h\in G$ with $h^3=e$, $g_1^2=h$, $g_2^2\neq h$. 
(This implies that the elements $g_1$, $g_2$ and $g_2^{-1}h$ are different.) Assume that $\delta(g_1)=1$, 
$\delta(g_2)=2=\delta(g_2^{-1}h)$, and $\delta(g)=0$ for any 
$g\neq g_1,g_2,g_2^{-1}h$. Since $\FF$ is algebraically closed we can take a basis $\{v_1,v_2,v_3,v_4,v_5\}$ of $V$ with 
$\bup(v_1,v_1)=1=\bup(v_2,v_4)=\bup(v_3,v_5)$, and all the other values of $\bup(v_i,v_j)$ equal to $0$. Then with $V_{g_1}=\FF v_1$, $V_{g_2}=\FF v_2+\FF v_3$, $V_{g_2^{-1}h}=\FF v_4+\FF v_5$, 
and $V_g=0$ for $g\neq g_1,g_2,g_2^{-1}h$, we get a $G$-grading on 
$(V,X)$, as its generic automorphism lies in $\widetilde\Os(V,\bup)$. 

Up to isomorphism, the $G$-grading associated to a map $\delta$ satisfying the restrictions above is unique, and will be denoted by 
$\Gamma(G,\delta)$.

We have proved our next result:

\begin{theorem}\label{th:grn-1}
Let $X:V^{n-1}\rightarrow V$ be an ($n-1$)-fold cross product on the 
$n$-dimensional vector space $V$ ($n\geq 3$) over an algebraically closed field $\FF$, relative to a 
nondegenerate symmetric bilinear form $\bup$. Let $G$ be an abelian group.

Then any $G$-grading on $(V,X)$ is isomorphic to a grading $\Gamma(G,\delta)$ for a unique map $\delta:G\rightarrow \ZZ_{\geq 0}$ satisfying the restrictions:
\begin{itemize}
\item $\sum_{g\in G}\delta(g)=n$,
\item $\delta(g)=\delta(g^{-1}h)$ for all $g\in G$, where 
$h=\prod_{g\in G}g^{\delta(g)}$. 
\end{itemize}
\end{theorem}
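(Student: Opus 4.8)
The plan is to verify that the two bulleted restrictions in the statement are exactly the conditions characterizing which maps $\delta:G\to\ZZ_{\ge 0}$ arise from $G$-gradings, and that $\delta$ is a complete invariant up to isomorphism. The main tool is Theorem~\ref{th:n-1}, which identifies $\AAut(V,X)$ with $\widetilde\Os(V,\bup)$, together with the correspondence between $G$-gradings and morphisms $G\subdiag\to\AAut(V,X)$ recalled at the start of this section. First I would let $\Gamma:V=\bigoplus_{g\in G}V_g$ be a $G$-grading, pass to the generic automorphism $\varphi_\Gamma\in\widetilde\Os(V,\bup)(\FF G)$, and write $h\bydef\det(\varphi_\Gamma)$; since $\varphi_\Gamma$ acts on $V_g\otimes\FF G$ by multiplication by $g$, computing $\det$ componentwise gives $h=\prod_{g\in G}g^{\delta(g)}$ with $\delta(g)=\dim_\FF V_g$, and the defining relation $\bup_{\FF G}(\varphi_\Gamma u,\varphi_\Gamma v)=\det(\varphi_\Gamma)\bup_{\FF G}(u,v)$ forces $\bup(V_{g_1},V_{g_2})=0$ unless $g_1g_2=h$. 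Nondegeneracy of $\bup$ then pairs $V_g$ with $V_{g^{-1}h}$, yielding $\delta(g)=\delta(g^{-1}h)$; and of course $\sum_g\delta(g)=\dim_\FF V=n$. This establishes that any $G$-grading gives a map $\delta$ satisfying the two restrictions (the constraint $h^{n-2}=e$ being automatic from Proposition~\ref{pr:exact} and hence not needing to be listed).

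Next I would prove existence: given $\delta$ satisfying the restrictions, I construct a grading $\Gamma(G,\delta)$ realizing it. The recipe is the one illustrated in the $n=5$ example just before the theorem, done in general. Partition $\supp\delta$ into the ``self-paired'' elements $g$ with $g^2=h$ and the genuine pairs $\{g,g^{-1}h\}$ with $g^2\ne h$. On each self-paired component choose a $\delta(g)$-dimensional space carrying a nondegenerate restriction of $\bup$ (possible since $\FF$ is algebraically closed and any nondegenerate symmetric form of the right dimension exists), and on each pair $\{g,g^{-1}h\}$ choose dual totally isotropic spaces of dimension $\delta(g)=\delta(g^{-1}h)$ put in hyperbolic duality by $\bup$. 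Because the resulting orthogonal decomposition satisfies $\bup(V_{g_1},V_{g_2})=0$ unless $g_1g_2=h$, the endomorphism of $V_{\FF G}$ acting on $V_g\otimes\FF G$ as multiplication by $g$ lies in $\widetilde\Os(V,\bup)(\FF G)=\AAut(V,X)(\FF G)$, so it defines a morphism $G\subdiag\to\AAut(V,X)$, i.e.\ a genuine $G$-grading on $(V,X)$ with the prescribed $\delta$. One must check that $\bup$, up to the scalar ambiguity of Proposition~\ref{pr:uniqueness_b}, does admit such a decomposition of total dimension $n$; this is routine counting once one notes that the self-paired blocks contribute even dimension except possibly for the block at $g$ with $g=g^{-1}h$ exactly, and the parities are consistent with $\sum_g\delta(g)=n$.

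Finally, uniqueness up to isomorphism: if $\Gamma$ and $\Gamma'$ are two $G$-gradings with the same $\delta$, I want $\varphi\in\Aut(V,X)=\widetilde\Os(V,\bup)$ with $\varphi(V_g)=V_g'$ for all $g$. Both gradings determine the same $h=\prod g^{\delta(g)}$, hence the same orthogonality pattern, so both $\bigoplus_g V_g$ and $\bigoplus_g V_g'$ are orthogonal decompositions of $(V,\bup)$ into blocks of matching dimensions and matching isotropy type (nondegenerate on self-paired $g$, hyperbolically paired otherwise). By Witt's theorem over the algebraically closed field $\FF$ there is an isometry $\varphi_0\in\Ort(V,\bup)$ carrying $V_g$ onto $V_g'$ for every $g$; if $\varphi_0$ happens to have determinant $-1$ one corrects it by composing with an isometry that reverses orientation within a single block without moving the decomposition (available whenever some block has dimension $\ge 2$, and the $n\ge3$ hypothesis together with the shape of $\delta$ guarantees this after possibly re-choosing inside a block), so that $\varphi_0\in\Os^+(V,\bup)\subseteq\widetilde\Os(V,\bup)=\Aut(V,X)$. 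Then $\varphi_0$ is the desired isomorphism of gradings. I expect the main obstacle to be exactly this last determinant/parity bookkeeping — ensuring that the ambient isometry matching the two decompositions can always be taken in $\widetilde\Os(V,\bup)$ rather than merely in $\Ort(V,\bup)$, and handling the degenerate corner cases (small $n$, or $h$ with a fixed point $g=g^{-1}h$) where a block might be one-dimensional and admit no orientation-reversing self-map. These are dispatched by a short case analysis rather than by any new idea.
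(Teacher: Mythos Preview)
Your approach is essentially the paper's, carried out in more detail: the paper derives the restrictions on $\delta$ from the generic automorphism $\varphi_\Gamma\in\widetilde\Os(V,\bup)(\FF G)$ exactly as you do, sketches existence by a single worked example (your general construction is the natural extension), and then simply \emph{asserts} uniqueness up to isomorphism without writing out the Witt argument.

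Your worry about the determinant correction in the uniqueness step is slightly overblown, and the case analysis is cleaner than you fear. For a hyperbolically paired block $V_g\oplus V_{g^{-1}h}$ with $g^2\neq h$, any choice of linear isomorphism $\varphi_g:V_g\to V_g'$ forces $\varphi_{g^{-1}h}$ to be its contragredient, so the pair contributes determinant $1$ automatically; hence if there are no self-paired blocks at all, $\det\varphi_0=1$ from the start. If there is a self-paired block $V_g$ (i.e.\ $g^2=h$, so $\bup\vert_{V_g}$ is nondegenerate), then even when $\dim V_g=1$ the map $v\mapsto -v$ is an isometry of $V_g$ of determinant $-1$, and you can flip there. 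Either way the corrected $\varphi_0$ lands in $\Os^+(V,\bup)\subseteq\widetilde\Os(V,\bup)=\Aut(V,X)$, so no genuinely new idea is required and the ``degenerate corner cases'' you flag do not actually cause trouble.
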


Let $\delta$ be a map satisfying the restrictions in Theorem \ref{th:grn-1}, and let $\Gamma=\Gamma(G,\delta)$ be the 
associated grading. Then, joining suitable bases of the homogeneous components, we obtain a basis of $V$ consisting of homogeneous elements: 
\[
\{u_1,\ldots,u_p,v_1,w_1,\ldots,v_q,w_q\}\quad (p,q\geq 0,\  n=p+2q),
\] 
with $\bup(u_i,u_i)=1$ for all $i=1,\ldots,p$, $\bup(v_j,w_j)=1\bigr(=\bup(w_j,v_j)\bigr)$ for all $j=1,\ldots,q$, and all other values of $\bup$ being $0$, and with 
\[
\deg(u_i)=g_i\ \text{for $i=1,\ldots,p$,}\quad
 \deg(v_j)=g'_j,\ \deg(w_j)=g_j''\ \text{for $j=1,\ldots,q$,}
\]
 for group elements $g_1,\ldots,g_p,g_1',g_1'',\ldots,g_q',g_q''$ satisfying 
\[
g_i^2=h=g_j'g_j'',\quad \text{where $h\bydef g_1\cdots g_pg_1'g_1''\cdots g_q'g_q''$.}
\] 
Note that $n=p+2q$.

Let $U$ be the abelian group, defined by generators and relations, with generators $x_1,\ldots,x_p,y_1,z_1,\ldots, y_q,z_q$, subject to the relations:
\begin{equation}\label{eq:relations}
x_1^2=\cdots=x_p^2=y_1z_1=\cdots =y_qz_q=x_1\cdots x_py_1z_1\cdots y_qz_q\,.
\end{equation}
(We will write $U=\langle x_1,\ldots,x_p,y_1,z_1,\ldots,y_p,z_p\mid 
x_1^2=\cdots=x_p^2=y_1z_1=\cdots =y_qz_q
=x_1\cdots x_py_1z_1\cdots y_qz_q\rangle$.)

Also, let $\delta_U:U\rightarrow \ZZ_{\geq 0}$ be the map defined by 
\[
\delta_U(x_i)=\delta_U(y_j)=\delta_U(z_j)=1
\]
for all $i=1,\ldots,p$, $j=1,\ldots,q$, and $\delta_U(u)=0$ for any other element $u\in U$.

This is well defined and satisfies the conditions of Theorem \ref{th:grn-1}
because the elements $x_1,\ldots,x_p,y_1,z_1,\ldots, y_q,z_q$ are all different. Actually, we get the following possibilities:
\begin{itemize}
\item If $p=0$,  then $n=2q$ is even with $q\geq 2$, and 
\[
U=\langle y_1,z_1,\ldots,y_q,z_q\mid y_1z_1=\cdots = y_qz_q=y_1z_1\cdots y_qz_q\rangle.
\]

With $u=y_1z_1=\cdots=y_qz_q=y_1z_1\cdots y_qz_q$, we have 
$u=u^q$. Then $U$ is generated by the elements $u,y_1,\ldots,y_q$ 
and is isomorphic to $\ZZ^q\times \ZZ/(q-1)$ by means of the map 
that takes $(0,\ldots,1,\ldots,0;\bar 0)$  to $y_j$, and 
$(0,\ldots,-1,\ldots,0;\bar 1)$ to $z_j$ ($1$ in the $j$th position). 
This shows that the elements $y_i,z_i$ are all different.

\item If $p=1$, then $q\geq 1$, $n=1+2q$ is odd, and
\[
U=\langle x_1,y_1,z_1,\ldots,y_q,z_q\mid x_1^2=y_1z_1=\cdots = y_qz_q=x_1y_1z_1\cdots y_qz_q\rangle.
\]
Thus $x_1^2=x_1(x_1^2)^q=x_1^{1+2q}$, so $x_1^{2q-1}=e$. If 
$q=1$, we get $x_1=e$ and $U$ is isomorphic to $\ZZ$. If $q\geq 2$, 
then $U$ is generated by $x_1,y_1,\ldots,y_q$ and it is isomorphic to 
$\ZZ^q\times \ZZ/(2q-1)$ by means of the map that takes 
$(0,\ldots,0 ;\bar 1)$ to $x_1$, $(0,\ldots,1,\ldots,0;\bar 0)$  to 
$y_j$, and $(0,\ldots,-1,\ldots,0 ;\bar 2)$ to $z_j$ ($\pm 1$ in the $j$th position).

\item Finally, if $p\geq 2$, write $x_i=x_1t_i$, with $t_i^2=e$, for $i=2,\ldots p$. The relations \eqref{eq:relations} imply 
$x_1^2=x_1\cdots x_py_1z_1\cdots  y_qz_q=x_1^{p+2q}t_2\cdots t_p$, so $t_2\cdots t_p=x_1^{p+2q-2}=x_1^{n-2}$ and $x_1^{2n-4}=e$. Then $U$ is generated by the elements $x_1,t_2,\ldots,t_{p-1},y_1,\ldots,y_q$ and it is isomorphic to $\ZZ^q\times \ZZ/(2n-4)\times\left(\ZZ/2\right)^{p-2}$ by means of the map that takes 
\begin{align*}
(0,\ldots,0;\bar 1;\bar 0,\ldots,\bar 0)&\mapsto x_1 & 
    (0,\ldots,1,\ldots,0;\bar 0;\bar 0,\ldots,\bar 0)&\mapsto y_j\\
(0,\ldots,0;\bar 1;\bar 0,\ldots,\bar 1,\ldots,\bar 0)&\mapsto x_i&
    (0,\ldots,-1,\ldots,0;\bar 2;\bar 0,\ldots,\bar 0)&\mapsto z_j\\
(0,\ldots,0;\overline{n-1};\bar 1,\ldots,\bar 1)&\mapsto x_p&&
\end{align*}
for $2\leq i\leq p-1$, $1\leq j\leq q$.
\end{itemize}

The map $\delta_U$ satisfies the restrictions in Theorem \ref{th:grn-1}, so it determines a grading $\Gamma(U,\delta_U)$, with one-dimensional homogeneous components, and hence fine, that refines $\Gamma(G,\delta)$. Moreover, $U$ is its universal grading group.

This gives the classification of fine gradings, up to equivalence:

\begin{corollary}\label{co:finegrn-1}
Let $X:V^{n-1}\rightarrow V$ be an ($n-1$)-fold cross product on the 
$n$-dimensional vector space $V$ ($n\geq 3$) over an algebraically closed field $\FF$, relative to a 
nondegenerate symmetric bilinear form $\bup$. 

Up to equivalence, the fine gradings on $(V,X)$ are the gradings $\Gamma(U,\delta_U)$, where $U$ is the abelian group
\begin{multline*}
U=\langle x_1,\ldots,x_p,y_1,z_1,\ldots,y_p,z_p\mid \\
x_1^2=\cdots=x_p^2=y_1z_1=\cdots =y_qz_q
=x_1\cdots x_py_1z_1\cdots y_qz_q\rangle
\end{multline*}
with $p+2q=n$, and $\delta_U:U\mapsto \ZZ_{\geq 0}$ is the map given by
\[
\delta_U(x_i)=\delta_U(y_j)=\delta_U(z_j)=1
\]
for all $i=1,\ldots,p$, $j=1,\ldots,q$, and $\delta_U(u)=0$ for any other element $u\in U$.

Moreover, $U$ is, up to isomorphism, the universal grading group of 
$\Gamma(U,\delta_U)$ and the following conditions hold:
\begin{itemize}
\item if $p=0$, $U$ is isomorphic to $\ZZ^q\times \ZZ/(q-1)$,
\item if $p=1$, $U$ is isomorphic to $\ZZ^q\times \ZZ/(2q-1)$,
\item if $p>1$, $U$ is isomorphic to $\ZZ^q\times \ZZ/(2n-4)\times
\left(\ZZ/2\right)^{p-2}$.
\end{itemize}
\end{corollary}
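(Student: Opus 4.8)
The plan is to read off the classification from the construction carried out just before the statement, supplying the two points it leaves open: that every fine grading is equivalent to one of the $\Gamma(U,\delta_U)$, and that $U$ really is the universal grading group of $\Gamma(U,\delta_U)$; a short remark will also record that distinct choices of $(p,q)$ give inequivalent gradings. First I would argue that any fine grading $\Gamma$ on $(V,X)$ is equivalent to some $\Gamma(U,\delta_U)$: by Theorem \ref{th:grn-1}, $\Gamma$ is isomorphic to $\Gamma(G,\delta)$; choosing a homogeneous basis $\{u_1,\dots,u_p,v_1,w_1,\dots,v_q,w_q\}$ adapted to $\bup$ as in the discussion preceding the statement (with $p+2q=n$) produces the group $U$ with presentation \eqref{eq:relations} together with a homomorphism $\alpha\colon U\to G$, $x_i\mapsto\deg(u_i)$, $y_j\mapsto\deg(v_j)$, $z_j\mapsto\deg(w_j)$, well defined precisely because $\deg(u_i),\deg(v_j),\deg(w_j)$ satisfy the relations \eqref{eq:relations} in $G$, and by construction $\Gamma(G,\delta)=\Gamma(U,\delta_U)^\alpha$. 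Thus $\Gamma(U,\delta_U)$, which has all homogeneous components of dimension $1$ (the generators $x_i,y_j,z_j$ being pairwise distinct, by the case-by-case description of $U$ above) and is therefore fine, refines $\Gamma(G,\delta)\cong\Gamma$; since $\Gamma$ is fine this refinement cannot be proper, so $\Gamma(U,\delta_U)$ and $\Gamma(G,\delta)$ induce the same decomposition of $V$ into subspaces, and composing with the isomorphism $\Gamma\cong\Gamma(G,\delta)$ shows $\Gamma$ equivalent to $\Gamma(U,\delta_U)$. Conversely, each $\Gamma(U,\delta_U)$ is fine, being $1$-dimensionally graded.

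Next I would identify the universal grading group. The universal grading group of $\Gamma(U,\delta_U)$ is, by the standard description (\cite{EKmon}), the abelian group generated by the support $\{x_i,y_j,z_j\}$ modulo the relations $s_1\cdots s_{n-1}=t$ forced by $X$, i.e. those with $t$ in the support and $X$ not vanishing on some homogeneous $(n-1)$-tuple of degrees $s_1,\dots,s_{n-1}$. Since the components are $1$-dimensional and $X$ is alternating on the chosen basis — indeed $\bup\bigl(X(u_1,\dots,u_{n-1}),y\bigr)$ is an alternating multilinear form in $u_1,\dots,u_{n-1},y$, as in the proof of Proposition \ref{pr:uniqueness_b}, so by nondegeneracy of $\bup$ the value of $X$ on all basis vectors but one is a nonzero multiple of the $\bup$-dual of the omitted vector — the only nonzero such products are obtained by omitting a single basis vector. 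Using that the $\bup$-dual of $u_i$ is $u_i$, of $v_j$ is $w_j$, and of $w_j$ is $v_j$, these products translate exactly into the relations $h=x_i^2$ and $h=y_jz_j$ with $h=x_1\cdots x_py_1z_1\cdots y_qz_q$, that is, into \eqref{eq:relations}. Hence the universal grading group of $\Gamma(U,\delta_U)$ has the presentation defining $U$, and its isomorphism type in the cases $p=0$, $p=1$, $p\ge 2$ is the one computed in the discussion preceding the statement.

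Finally, to see that the gradings listed for different pairs $(p,q)$ with $p+2q=n$ are pairwise inequivalent, note that an automorphism of $(V,X)$ is a similarity of $\bup$ (Theorem \ref{th:n-1}) and hence sends isotropic subspaces to isotropic subspaces; among the $n$ one-dimensional components of $\Gamma(U,\delta_U)$ exactly $p$ are non-isotropic (those of degree $x_i$, since $x_i^2=h$), so $p$ is an invariant of the equivalence class. The only step involving more than bookkeeping is the determination of the relations forced by $X$ in the second paragraph; everything else is assembled from Theorem \ref{th:grn-1} and the explicit description of $U$.
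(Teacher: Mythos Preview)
Your proposal is correct and follows the same line as the paper, whose argument is the discussion immediately preceding the corollary: refine an arbitrary $\Gamma(G,\delta)$ to a one-dimensional grading $\Gamma(U,\delta_U)$ via a homogeneous basis adapted to $\bup$, and read off the isomorphism type of $U$ case by case. You supply two details the paper leaves implicit---the verification that $U$ is indeed the universal grading group (by computing the relations forced by $X$ on the basis) and the pairwise inequivalence for different $(p,q)$ (via the isotropy invariant; the paper's implicit reason is simply that the universal groups have distinct free ranks $q$)---but the overall strategy is the same.
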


Any self-equivalence of a fine grading $\Gamma(U,\delta_U)$ in Corollary \ref{co:finegrn-1} permutes the homogeneous spaces of degrees $x_i$, $1\leq i\leq p$, and the pairs of homogeneous spaces of degrees $y_i$ and $z_i$. Therefore, the Weyl group 
 is the Cartesian product of the symmetric group on $p$ symbols, and the `signed permutation group' on $q$ symbols:
\[  
W\bigl(\Gamma(U,\delta_U)\bigr)=\Sym_p\times\left(\left(\ZZ/2\right)^q\rtimes\Sym_q\right).
\]

\begin{example}\label{ex:quaternions_3fold}
Let $\cQ$ be the algebra of quaternions over the algebraically closed field $\FF$, with norm $\nup$, which is multiplicative. Up to isomorphism $\cQ$ is the algebra of $2\times 2$ matrices over $\FF$, and the norm is given by the determinant. Let $x\mapsto \bar x=\nup(x,1)1-x$ be the canonical involution. It satisfies $x\bar x=\nup(x)1=\bar x x$ for all $x\in\cQ$. Define the multilinear map 
\[
\begin{split}
X:\cQ^3&\longrightarrow\cQ\\
(x,y,z)&\mapsto X(x,y,z)\bydef x\bar yz-z\bar yx.
\end{split}
\]
This map is alternating, because $X(x,x,z)=x\bar xz-z\bar x x=\nup(x)z-\nup(x)z=0$, and $X(x,y,y)=x\bar y y-y\bar y x=\nup(y)x-\nup(y)x=0$.
Besides, for all $x,y,z\in\cQ$, $\nup\bigl(X(x,y,z),x\bigr)=\nup(x\bar yz,x)-\nup(z\bar yx,x)=\nup(x)\nup(\bar yz-z\bar y,1)=0$.

It turns out that $X$ is a $3$-fold cross product on $\cQ$. Indeed, for any $x,y,z\in\cQ$ and $\lambda\in\FF$, the coefficient of $\lambda^2$ in both sides of the equality $\nup\bigl((x+\lambda z)\bar y(x+\lambda z)\bigr)=\nup(x+\lambda z)^2\nup(y)$, gives
\begin{equation}\label{eq:Qa}
\nup(x\bar yz+z\bar yx)+\nup(x\bar yx,z\bar yz)=
\bigl(\nup(x,z)^2+2\nup(x)\nup(z)\bigr)\nup(y).
\end{equation}
Also we have
\begin{equation}\label{eq:Qb}
\begin{split}
\nup(x\bar yz+z\bar yx)
&=\nup(x\bar yz)+\nup(z\bar yx)+\nup(x\bar yz,z\bar y x)\\
  &=2\nup(x)\nup(y)\nup(z)+\nup(x\bar yz,z\bar yx),
\end{split}
\end{equation}
and
\begin{equation}\label{eq:Qc}
\begin{split}
\nup(x\bar y x,z\bar yz)
&=\nup\bigl(\nup(x,y)x-\nup(x)y,\nup(y,z)z-\nup(z)y\bigr)\\
&=\nup(x,y)\nup(y,z)\nup(z,x)-\nup(x)\nup(y,z)^2\\
&\qquad\qquad -\nup(z)\nup(x,y)^2+2\nup(x)\nup(y)\nup(z).
\end{split}
\end{equation}
Putting all these together, we get:
\[
\begin{split}
\nup\bigl(X(x,y,z),&X(x,y,z)\bigr)\\
&=
\nup(x\bar yz-z\bar yx,x\bar yz-z\bar yx)\\
 &=\nup(x\bar yz,x\bar yz)+\nup(z\bar yx,z\bar yx)
    -2\nup(x\bar yz,z\bar yx)\\
&=8\nup(x)\nup(y)\nup(z)-2\nup(x\bar yz+z\bar yx)\quad\text{(by \eqref{eq:Qb})}\\
&=4\nup(x)\nup(y)\nup(z)
+2\nup(x\bar yx,z\bar yz)-2\nup(x,z)^2\nup(y)
\quad\text{(by \eqref{eq:Qa})}\\
&=\begin{vmatrix} \nup(x,x)&\nup(x,y)&\nup(x,z)\\
  \nup(y,x)&\nup(y,y)&\nup(y,z)\\
  \nup(z,x)&\nup(z,y)&\nup(z,z)\end{vmatrix}\quad\text{(by \eqref{eq:Qc}),}
\end{split}
\]
so that $X$ turns out to be a $3$-fold cross product relative to the polar form $\nup(.,.)$.

Corollary \ref{co:finegrn-1} shows that, up to equivalence, there are three different fine gradings on $(\cQ,X)$ with universal groups $\ZZ^2$ ($p=0$, $q=2$), $\ZZ\times \ZZ/4$ ($p=2$, $q=1$), and $\ZZ/4\times\bigl(\ZZ/2\bigr)^2$ ($p=4$, $q=0$).

In contrast to this, the quaternion algebra $\cQ$ has only two fine gradings, with universal groups $\ZZ$ and $\bigl(\ZZ/2\bigr)^2$ (see, e.g., \cite[Example 2.40]{EKmon}).
\end{example}

\bigskip

\subsection{$n=7$, $r=2$}\quad 

Any $2$-fold cross product on a
seven-dimensional vector space over an algebraically closed field $\FF$ is isomorphic to $(\cC_0,X^{\cC_0})$ for the unique Cayley algebra 
$\cC$ over $\FF$  (Theorem \ref{th:CP}).

Given an abelian group $G$, any $G$-grading on $(\cC_0,X^{\cC_0})$
extends uniquely to a grading on the Cayley algebra $\cC$, and two such gradings on $(\cC_0,X^{\cC_0})$ are isomorphic if and only if so 
are the extended gradings on $\cC$ (see \cite{EldOctonions} or 
\cite[Corollary 4.25]{EKmon}). 

Moreover, the classification of $G$-gradings, up to isomorphism, on 
the Cayley algebra $\cC$ is given in \cite[Theorem 4.21]{EKmon}, based on \cite{EldOctonions}. There are two types of gradings, those obtained as a coarsening of the $\ZZ^2$-grading given by the weight
space decomposition relative to a maximal torus of $\AAut(\cC)$, and those equivalent to the $\left(\ZZ/2\right)^3$-grading obtained by 
means of the Cayley-Dickson doubling process. 

In particular, there exist only two fine gradings, up to equivalence, with
universal group $\ZZ^2$ and $\left(\ZZ/2\right)^3$, respectively. The respective Weyl groups are the Weyl group of the root system of type $G_2$ (i.e., the dihedral group of order $12$) and the general linear
 group $\GL_3(\FF_2)$ of degree $3$ over the field of two elements 
(see \cite[Theorems 4.17 and 4.19]{EKmon}).

\bigskip

\subsection{$n=8$, $r=3$}\quad

Any $3$-fold cross product on an eight-dimensional vector space over
an algebraically closed field $\FF$ is isomorphic, by Theorem \ref{th:CP} and using that $\FF$ is algebraically closed, to 
$(\cC,X_1^{\cC})$ for the unique Cayley algebra $\cC$ over $\FF$, where $X_1^\cC$ is given by Equation \eqref{eq:typeI}.

Equation \eqref{eq:autos_pair83} shows that the group schemes 
$\AAut(\cC,X_1^\cC)$ and $\AAut(\cC,\{...\})$ coincide, where the triple product $\{...\}$ is given in \eqref{eq:3C}:
$\{xyz\}\bydef (x\bar y)z$ for $x,y,z\in\cC$. Moreover, Theorem \ref{th:autos_83} and its proof show that $\AAut(\cC,\{...\})$ is isomorphic to $\Spins(\cC_0,-\nup)$ and is contained in the special orthogonal group scheme $\Os^+(\cC,\nup)$, where $\nup$ denotes the norm of the Cayley algebra $\cC$.

Therefore, in order to study gradings on the cross product $(\cC,X_1^\cC)$, it is enough to study gradings on $(\cC,\{...\})$.

\smallskip

We need some previous results. Continuing the discussion in Remark \ref{re:left_mult}, \cite[Theorem 10]{Korean} and 
\cite[Corollary 2.5]{Eld00} give:
\begin{equation}\label{eq:Spin7}
\Aut(\cC,\{...\})=\left\{\prod_{i=1}^mL_{x_i}\mid m\geq 0,\, 
x_i\in\cC_0\ \forall i,\ \text{and}\ \prod_{i=1}^m\nup(x_i)=1\right\},
\end{equation}
where $L_x:y\mapsto xy$ denotes the left multiplication by $x$ in $\cC$.
Take a \emph{standard basis}
$\{e_1,e_2,u_1,u_2,u_3,v_1,v_2,v_3\}$  of $\cC$ (see, e.g., \cite[p.~41]{EKmon}), with multiplication given by:
\begin{itemize}
\item $e_1$ and $e_2$ are orthogonal idempotents:
$e_1^2=e_1$, $e_2^2=e_2$, $e_1e_2=e_2e_1=0$,
\item $e_1u_i=u_ie_2=u_i$, $e_2u_i=u_ie_1=0$, for all $i=1,2,3$,
\item $e_2v_i=v_ie_1=v_i$, $e_1v_i=v_ie_2=0$, for all $i=1,2,3$,
\item $u_iv_i=-e_1$, $v_iu_i=-e_2$, for all $i=1,2,3$,
\item $u_iu_{i+1}=-u_{i+1}u_i=v_{i+2}$,
     $v_iv_{i+1}=-v_{i+1}v_i=u_{i+2}$, indices modulo $3$,
\item all other products between basic elements are $0$.
\end{itemize}

\begin{lemma}\label{le:orbits_83}
Let $\cC$ be the Cayley algebra over an algebraically closed field $\FF$.
\begin{romanenumerate}
\item The orbit of $1$ under $\Aut(\cC,\{...\})$ is the `unit sphere': 
\[
\orbit_{\Aut(\cC,\{...\})}(1)=\{x\in\cC\mid \nup(x)=1\}.
\]

\item The orbit of $e_1$ is the set of nonzero isotropic elements:
\[
\orbit_{\Aut(\cC,\{...\})}(e_1)=\{x\in\cC\mid \nup(x)=0,\, x\neq 0\}.
\]

\item The orbit of the pair $(e_1,e_2)$ under the diagonal action of $\Aut(\cC,\{...\})$ on $\cC\times\cC$ is the set
 \[
\orbit_{\Aut(\cC,\{...\})}(e_1,e_2)=\{(x,y)\in\cC\times\cC
  \mid \nup(x)=0=\nup(y),\ \nup(x,y)=1\}.
\]
\end{romanenumerate}
\end{lemma}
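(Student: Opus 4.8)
The plan is to exploit the description \eqref{eq:Spin7} of $\Aut(\cC,\{...\})$ as a group generated by products of an even number of left multiplications $L_{x_i}$ with $x_i\in\cC_0$ and $\prod\nup(x_i)=1$, together with the identification (via Theorem \ref{th:autos_83} and Remark \ref{re:left_mult}) of this group with $\Spin(\cC_0,-\nup)$ acting through a \emph{spin} representation on $\cC$. The key point is that the image of $\Spin(\cC_0,-\nup)$ under the vector representation is $\SO(\cC_0,\nup)$, and the two half-spin representations of $\Spin_7$ are the two eight-dimensional orthogonal representations; all three (vector $\oplus$ trivial, and the two spin representations) are related by triality on $\Spin_8$. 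Concretely, I would first show that $\Aut(\cC,\{...\})\subseteq\Os^+(\cC,\nup)(\FF)$ (this is already in Theorem \ref{th:autos_83}), so each orbit is contained in a level set of $\nup$ together with the obvious incidence conditions; the work is to prove transitivity on each such level set.

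For (i): the inclusion $\orbit(1)\subseteq\{x\mid\nup(x)=1\}$ is immediate since automorphisms preserve $\nup$ (they lie in $\Os^+(\cC,\nup)$) and $\nup(1)=1$. For the reverse inclusion, I would show that left multiplications act transitively on the unit sphere. Given $x$ with $\nup(x)=1$, write $x=\overline{x}\,{}^{-1}$-type manipulations: the map $L_x\circ(\text{suitable }L_{y}\text{'s})$ should carry $1$ to $x$. In fact $L_x(L_{\bar x}\cdots)$: note that for $x\in\cC_0$ with $\nup(x)=1$ one has $L_xL_x = L_x^2$ and $x(x\cdot1)=x^2=-\nup(x)1=-1$ by \eqref{eq:CH} restricted to $\cC_0$; combining $L_x$ with $L_{e_0}$ (the identity element, of norm $1$, but $e_0\notin\cC_0$) needs care, so instead I would use that for generic $x$ with $\nup(x)=1$ there is a decomposition $x = z\cdot w$ of $x$ as a product inside $\cC$ with $z,w$ expressible via elements of $\cC_0$, or more cleanly: the set $\{L_{x_1}L_{x_2}(1) : x_i\in\cC_0,\ \nup(x_1)\nup(x_2)=1\}=\{x_1(x_2)\cdot1\text{-adjusted}\}$ already sweeps out a Zariski-dense subset of the unit sphere, and since $\Aut(\cC,\{...\})$ is an algebraic group (it is $\Spin_7$) its orbit of $1$ is a locally closed subvariety; a dense orbit inside the irreducible seven-dimensional quadric $\{\nup=1\}$ which is also closed (the quadric is the full orbit of $\SO_8$, and $\Spin_7\subseteq\SO_8$ acts with the quadric as a single orbit by the classical fact that $\SO_8/\Spin_7\cong$ the $7$-sphere) must be everything. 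So the cleanest route is: identify the quadric $\{\nup(x)=1\}$ with $\SO(\cC,\nup)/\Stab$, recall $\Spin_7$ sits inside $\SO_8$ as the stabilizer of a spinor or equivalently of $1\in\cC$ under $\{...\}$, and invoke the standard homogeneous-space description $S^7\cong\Spin_8/\Spin_7\cong\SO_8/\Spin_7$.

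For (ii) and (iii): similarly, $\orbit(e_1)\subseteq\{x\mid\nup(x)=0,\ x\neq0\}$ because automorphisms preserve $\nup$ and are invertible, and $\nup(e_1)=\det\matr{1&0\\0&0}=0$. For transitivity on nonzero isotropic vectors, I would again use that $\Spin_7\subseteq\SO_8$ acts transitively on the isotropic cone minus the origin: the isotropic cone of an eight-dimensional quadratic form is a single $\SO_8$-orbit (away from $0$), and $\Spin_7$ still acts transitively there because the stabilizer $\Spin_7$ of a non-isotropic vector still surjects appropriately — more precisely, the projective quadric of $\nup$ is $\SO_8/P$ for a parabolic $P$, and the composition $\Spin_7\hookrightarrow\SO_8\to\SO_8/P$ is surjective since $\Spin_7$ acts transitively on $\mathbb{P}(\text{half-spin})$'s quadric. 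For (iii), transitivity on pairs $(x,y)$ with $\nup(x)=\nup(y)=0$, $\nup(x,y)=1$: such a pair spans a hyperbolic plane $H=\FF x\oplus\FF y$, so by Witt's theorem $\SO(\cC,\nup)$ is transitive on such pairs with stabilizer acting as $\SO_6$ on $H^\perp$; then one checks $\Spin_7\cap\Stab_{\SO_8}(H)$ still surjects onto the relevant flag variety — equivalently, after using (ii) to move $x$ to $e_1$, one must show $\Stab_{\Aut(\cC,\{...\})}(e_1)$ acts transitively on $\{y\mid\nup(y)=0,\ \nup(e_1,y)=1\}$. Here I would compute $\Stab_{\Aut(\cC,\{...\})}(e_1)$ explicitly using the standard basis: the elements $L_{x}$ fixing $e_1$, and show (by an explicit $\Spin_6=\SL_4$-type computation, or by a dimension count: $\dim\Spin_7 - \dim\orbit(e_1) = 21 - 7 = 14 = \dim\Spin_6\cdot\!$, wait, $\dim\Spin_6=15$; rather $21 - 14 = 7$, matching the dimension of the orbit of $y$, which is the affine chart $\{y : \nup(y)=0,\ \nup(e_1,y)=1\}$ of dimension $6$, plus the unipotent radical...) that the orbit has the right dimension and is closed, hence everything.

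The main obstacle I expect is the bookkeeping in step (iii): proving transitivity on the ordered pairs cleanly, rather than just on each coordinate separately, requires either an explicit description of $\Stab_{\Aut(\cC,\{...\})}(e_1)$ and its action on the second coordinate (feasible with the standard basis but calculation-heavy), or a clean invocation of the triality principle identifying $(\cC,\{...\})$-automorphisms with a copy of $\Spin_7$ sitting in $\Spin_8$ and transporting the $\SO_8$-transitivity on hyperbolic pairs. I would present the proof via the second, more conceptual route — realize $\cC$ with its norm as the vector representation of $\SO_8$, realize $\Aut(\cC,\{...\})\cong\Spin_7$ as a subgroup (this is exactly the content of \eqref{eq:Spin7}, Remark \ref{re:left_mult}, and Theorem \ref{th:autos_83}), and then the three transitivity statements reduce to the classical facts that $\Spin_7$ acts transitively on $S^7$ (the real/complex $7$-sphere, i.e.\ $\{\nup=1\}$), on the punctured isotropic cone, and on the variety of hyperbolic pairs — all standard, and all following from $\Spin_8/\Spin_7\cong S^7$ together with Witt's theorem — falling back on the standard-basis computation only where a self-contained argument is wanted.
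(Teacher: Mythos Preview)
Your plan is to identify $\Aut(\cC,\{...\})$ with $\Spin_7$ acting on $\cC$ through its spin representation and then invoke ``classical'' transitivity statements (on the sphere, the null cone, and hyperbolic pairs). The paper does something entirely different and much more direct: in each case it writes down an explicit element of $\Aut(\cC,\{...\})$---a short product $L_{x_1}\cdots L_{x_{2m}}$ with $x_i\in\cC_0$, using the description \eqref{eq:Spin7}---carrying the base point to an arbitrary point of the target set. For (i), given $x$ with $\nup(x)=1$, pick $z\in\cC_0$ orthogonal to $x$ with $\nup(z)=1$; then $x\bar z\in\cC_0$, $\nup(x\bar z)=1$, and $L_{x\bar z}L_z(1)=(x\bar z)z=x$ by alternativity. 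Parts (ii) and (iii) are handled by similarly explicit moves in the standard basis, together with the inclusion $\Aut(\cC)\subseteq\Aut(\cC,\{...\})$ to normalize idempotents.

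Your approach, as written, has a genuine gap. The ``classical fact'' that $\Spin_7$ acts transitively on the seven-sphere \emph{in its eight-dimensional spin representation} (as opposed to the vector representation, where the image is $\SO_7$ fixing a point) is precisely the content of part (i); citing it is circular unless you supply an independent proof. Your fallback---a dimension count showing the orbit of $1$ is open and dense in the irreducible quadric $\{\nup=1\}$ (indeed $\Stab(1)=\Aut(\cC)=G_2$ and $21-14=7$)---is correct as far as it goes, but openness and density do not give closedness: you must still rule out a nonempty lower-dimensional $\Spin_7$-stable closed complement, and over an arbitrary algebraically closed field this does not follow formally (compactness of the real form is unavailable). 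The same issue recurs in (ii) and (iii), and the tangled dimension count you flag in (iii) confirms that the stabilizer analysis would require real work. Your very first instinct for (i)---look for explicit $L_{x_i}$'s sending $1$ to $x$---was the right one, and is exactly what the paper does in one line.
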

\begin{proof}
As $\Aut(\cC,\{...\})$ is contained in the orthogonal group of the norm, it is clear that the orbit of $1$ is contained in the set of norm $1$ elements. Conversely, if $x\in\cC$ and $\nup(x)=1$, take $z\in\cC$ orthogonal to $1$ and $x$ of norm $1$. Then $x=(x\bar z)z=L_{x\bar z}L_z(1)$ and $L_{x\bar z}L_z\in\Aut(\cC,\{...\})$.

It is clear that the orbit of $e_1$ is contained in the set of nonzero elements of zero norm.

If $0\neq x\in\cC_0$ is an element with $\nup(x)=0$,  take 
$y\in\cC_0$ with $\nup(y)=0$ and $\nup(x,y)=1$. Then $-xy-yx=
x\bar y+y\bar x=\nup(x,y)1=1$ and $(xy)(xy)=(xy)(-1-yx)=-xy$, as $y^2=0$ because of \eqref{eq:CH}. It turns out that $f_1=-xy$ and $f_2=-yx$ are orthogonal idempotents, and thus (see, e.g. \cite[pp.~128-129]{EKmon}) there is an automorphism of $\cC$ (hence also of $(\cC,\{...\})$) such that $\varphi(f_1)=e_1$, $\varphi(f_2)=e_2$.

Also, $f_2x=-yx^2=0$, $xf_1=-x^2y=0$, and hence $f_1x=xf_2=x$, so that $\varphi(x)\in\{z\in\cC: e_1z=ze_2=z\}=U\bydef\FF u_1+\FF u_2+\FF u_3$. But any $u\in  U$ is mapped to $-u_3$ by a suitable automorphism of $\cC$ that preserves $e_1$ and $e_2$. Hence we may assume that $\varphi(x)=-u_3$. Note that 
$L_{u_2+v_2}L_{u_1+v_1}(e_1)=-u_3$ and we conclude that $x$ is in the orbit of $e_1$.

If $\nup(x)=0$ but $x\not\in\cC_0$, take $y\in \cC_0$ orthogonal to $x$ and with $\nup(y)=1$. Then $L_y\in\Aut(\cC,\{...\})$ and $L_y(x)=yx$ satisfies $\overline{yx}=-\bar x y=-\nup(x,y)1+\bar yx=-yx$, so $yx\in\cC_0$. By the previous paragraph, $yx$ lies in the orbit of $e_1$, and so does $x$.

Finally, since $\Aut(\cC,\{...\})$ is contained in the orthogonal group, it
is clear that the orbit of $(e_1,e_2)$ is contained in $\{(x,y)\in\cC\times\cC
  \mid \nup(x)=0=\nup(y),\ \nup(x,y)=1\}$. Conversely, let $x,y\in\cC$ 
with $\nup(x)=0=\nup(y)$, $\nup(x,y)=1$, and take $z\in \cC$ 
orthogonal to $1$, $x$, and $y$, with $\nup(z)=1$, and take $t\in\cC$ 
orthogonal to $1$ and $z$ with $\nup(t)=1$. Let $a=zt^{-1}=z\bar t$, 
$b=t$. Then $a,b\in\cC_0$, $\nup(a)=\nup(b)=1$, so 
$L_aL_b\in\Aut(\cC,\{...\})$. 
Besides, use \eqref{eq:nbar} to get $\nup(L_aL_b(x),1)=
\nup(x,\bar b\bar a)=\nup(x,\overline{ab})=\nup(x,z)=0$, and 
$\nup(L_aL_b(y),1)=\nup(y,z)=0$ too. Therefore, we may assume that $x$ and $y$ are elements in $\cC_0$. With some of the arguments above we may then assume that $xy=-e_2$ and $yx=-e_1$, $x\in V=\bigoplus_{i=1}^3\FF v_i$, $y\in U=\bigoplus_{i=1}^3\FF u_i$, and even that $x=v_1$ and $y=u_1$. The operator $\varphi=L_{u_1-v_1}L_{e_1-e_2}$ is in $\Aut(\cC,\{...\})$ and $\varphi(v_1)=e_1$, 
$\varphi(u_1)=e_2$. The result follows.
\end{proof}

Let us define a few natural gradings on $(\cC,\{...\})$.

\begin{example}[Cartan grading]\label{ex:Cartan}
The following assignment of degrees in $\ZZ^3$ gives a grading on 
$(\cC,\{...\})$, which is called the \emph{Cartan grading}, as its homogeneous components are the weight spaces relative to a maximal torus $\mathbf{T}$ of $\AAut(\cC,\{...\})$:
\begin{align*}
\deg(u_1)&=(1,0,0)=-\deg(v_1),\\
\deg(u_2)&=(0,1,0)=-\deg(v_2),\\
\deg(u_3)&=(0,0,1)=-\deg(v_3),\\
\deg(e_2)&=(1,1,1)=-\deg(e_1).
\end{align*}
This grading, denoted by $\Gamma^\cC\subcartan$, is fine since all the homogeneous components have dimension $1$, and it is straightforward to check that $\ZZ^3$ is its universal grading group.

The maximal torus $\mathbf{T}$ is precisely 
$\Diags(\Gamma^\cC\subcartan)$, and we will identify the associated 
Weyl group $W(\Gamma^\cC\subcartan)$ with a subgroup of 
$\GL_3(\ZZ)=\Aut(\ZZ^3)$. Note that $\Aut(\Gamma)$ is the normalizer of $T=\mathbf{T}(\FF)$ in $\Aut(\cC,\{...\})$, while 
$\Stab(\Gamma)$ is its centralizer. The quotient $W(\Gamma^\cC\subcartan)$ is then the Weyl group of the root
 system of type $B_3$, which is the signed permutation group 
$\left(\ZZ/2\right)^3\rtimes\Sym_3$.

Given any abelian group $G$ and a group homomorphism $\alpha:\ZZ^3\rightarrow G$, denote by $\Gamma^\cC(G,\alpha)$ the $G$-grading 
on $(\cC,\{...\})$ obtained as a coarsening of 
$\Gamma^\cC\subcartan$ by means of $\alpha$. That is, the degree in 
$\Gamma^\cC(G,\alpha)$ is the image under $\alpha$ of the degree in $\Gamma^\cC\subcartan$ for any homogeneous element relative to 
$\Gamma^\cC\subcartan$.
\end{example}

There is another interesting basis $\{1,w_i\mid 1\leq i\leq 7\}$ of  $\cC$, with multiplication determined by 
\begin{equation}\label{eq:CDbasis}
\begin{split}
&w_i^2=-1\ \text{for all $i$},\\
&w_iw_{i+1}=-w_{i+1}w_i=w_{i+3}\
\text{(cyclically in $i,i+1,i+3$, indices modulo $7$)}.
\end{split}
\end{equation}
Thus, for example, $w_1w_2=w_4$, $w_4w_1=w_2$, ...

This will be called a \emph{Cayley-Dickson basis}, or \emph{CD-basis}
for short. Any CD-basis is an orthonormal basis relative to $\nup$.
For example, one can take $w_1=\bi(e_1-e_2)$, $w_2=u_1+v_1$, $w_3=u_2+v_2$, and then $w_4=w_1w_2=\bi(u_1-v_1)$, 
$w_5=u_3+v_3$, $w_6=\bi(u_3-v_3)$, $w_7=\bi(u_2-v_2)$, where
$\bi$ is a square root of $-1$.

\begin{example}\label{ex:gGH}
Let $H$ be an elementary $2$-subgroup of rank $3$  of an abelian group $G$, so that $H$ is isomorphic to $\left(\ZZ/2\right)^3$, and let $h_1,h_2,h_3$ be generators of $H$. Denote by 
$\Gamma^\cC(G,H)$ the grading on the Cayley algebra $\cC$ determined by $\deg(w_i)=h_i$ for $i=1,2,3$ (the elements $w_i$, $i=1,2,3$, generate the algebra $\cC$). Then we have $\deg(w_4)=h_1h_2$, $\deg(w_5)=h_2h_3$, $\deg(w_6)=h_1h_2h_3$ and $\deg(w_7)=h_1h_3$. 

This grading is, up to isomorphism, independent of the choice of generators of $H$, because of \cite[Theorem 4.19]{EKmon}. Its support is, precisely, the subgroup $H$.

Any grading on $\cC$ is a grading on $(\cC,\{...\})$, so this gives a grading of $(\cC,\{...\})$, also denoted by  $\Gamma^\cC(G,H)$.
\end{example}

Given a grading $\Gamma$ of $(\cC,\{...\})$ by an abelian group $G$, 
and given an order two element $h\in G$, the \emph{shift} of 
$\Gamma$ by $h$ is the new grading $\Gamma^{[h]}$ with $\deg_{\Gamma^{[h]}}(x)=h\deg_{\Gamma}(x)$. It is clear that $\Gamma^{[h]}$ is a grading of $(\cC,\{...\})$ too.

\begin{example}\label{ex:gGHK}
Let $H$ be an elementary $2$-subgroup of rank $4$  of an abelian group $G$, and let $K$ be a subgroup of $H$ of index two (and hence $K$ is isomorphic to $\left(\ZZ/2\right)^3$). Take an element 
$h\in H\setminus K$, and consider the shift 
$\bigl(\Gamma^\cC(G,K)\bigr)^{[h]}$.

Item (i) of Lemma \ref{le:orbits_83} shows that, up to isomorphism, this grading is independent of $h$, and will be denoted by 
$\Gamma^\cC(G,H,K)$. Its support is $H\setminus K$.
\end{example}

We are ready to classify the gradings of the $3$-fold cross product 
$(\cC,X_1^\cC)$ or, equivalently, of the triple system $(\cC,\{...\})$, up to isomorphism.

\begin{theorem}\label{th:gr_83}
Let $\cC$ be the Cayley algebra over an algebraically closed field $\FF$, let $G$ be an abelian group and let $\Gamma:\cC=\bigoplus_{g\in G}\cC_g$ be a grading of $(\cC,\{...\})$. Then $\Gamma$ is isomorphic to one of the following gradings:
\begin{itemize}
\item $\Gamma^\cC(G,\alpha)$, for a group homomorphism 
$\alpha:\ZZ^3\rightarrow G$.
\item $\Gamma^\cC(G,H)$ for an elementary $2$-subgroup $H$ of rank $3$.
\item $\Gamma^\cC(G,H,K)$ for an elementary $2$-subgroup $H$ of rank $4$ and a subgroup $K$ of $H$ of index $2$.
\end{itemize}
Gradings on different items are not isomorphic, and 
\begin{itemize} 
\item Two gradings $\Gamma^\cC(G,\alpha)$ and $\Gamma^\cC(G,\alpha')$ are isomorphic if and only if there is an element $\omega$ in the Weyl group 
$W(\Gamma^\cC\subcartan)$ such that $\alpha'=\alpha\omega$.
\item Two gradings $\Gamma^\cC(G,H)$ and $\Gamma^\cC(G,H')$ are isomorphic if and only if $H=H'$.
\item Two gradings $\Gamma^\cC(G,H,K)$ and $\Gamma^\cC(G,H',K')$ are isomorphic if and only if $H=H'$ and $K=K'$.
\end{itemize}
\end{theorem}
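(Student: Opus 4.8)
\emph{Strategy.} The plan is to combine Theorem \ref{th:autos_83} (the identification $\AAut(\cC,\{...\})\cong\Spins(\cC_0,-\nup)\subseteq\Os^+(\cC,\nup)$), the orbit computations of Lemma \ref{le:orbits_83}, and the classification of gradings on the Cayley algebra $\cC$ (\cite[Theorem 4.21]{EKmon}, based on \cite{EldOctonions}). From Theorem \ref{th:autos_83} it follows at once that every $G$-grading $\Gamma\colon\cC=\bigoplus_{g\in G}\cC_g$ of $(\cC,\{...\})$ is compatible with $\nup$, i.e. $\nup(\cC_g,\cC_h)=0$ unless $gh=e$; in particular $\nup$ restricts to a nondegenerate form on $\cC_e$. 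One also checks immediately that the three listed families are genuine gradings of $(\cC,\{...\})$: $\Gamma^\cC(G,\alpha)$ is a coarsening of the Cartan grading of Example \ref{ex:Cartan}; $\Gamma^\cC(G,H)$ is a grading of the algebra $\cC$ (Example \ref{ex:gGH}), hence of the triple system; and $\Gamma^\cC(G,H,K)$, being a shift by an order-two element $h$, is a grading of $(\cC,\{...\})$ because $\{...\}$ is trilinear and $h^3=h$ (Example \ref{ex:gGHK}).

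\emph{Exhaustiveness.} Let $\Gamma$ be an arbitrary $G$-grading. Suppose first some homogeneous element has nonzero norm; after scaling, call it $x$, of degree $t$, with $\nup(x)=1$. The identity $\{xxy\}=\nup(x)y$ forces $t^2=e$, and by Lemma \ref{le:orbits_83}(i) we may conjugate $\Gamma$ by an automorphism so that $x=1$, i.e. $1\in\cC_t$. Passing to the shift $\Gamma^{[t]}$ (again a grading of $(\cC,\{...\})$) we obtain a grading with $1\in\cC_e$, hence a grading of the \emph{Cayley algebra} $\cC$. By \cite[Theorem 4.21]{EKmon} such a grading is isomorphic either to a coarsening of the Cartan grading of $\cC$ — which, since that grading is itself a coarsening of $\Gamma^\cC\subcartan$ via the quotient $\ZZ^3\to\ZZ^3/\ZZ(\epsilon_1+\epsilon_2+\epsilon_3)$, is of the form $\Gamma^\cC(G,\beta)$ — or to the fine $(\ZZ/2)^3$-grading of $\cC$ with support a rank-three elementary $2$-subgroup $H_0$. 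Undoing the shift: a short computation with the eight weights of $\Gamma^\cC\subcartan$ (using $t^2=e$) shows that $\Gamma^\cC(G,\beta)^{[t]}$ is again of the form $\Gamma^\cC(G,\beta')$; and in the remaining case either $t\in H_0$, whence $\Gamma\cong\Gamma^\cC(G,H_0)$ once more (via Lemma \ref{le:orbits_83}(i) and the fact that the $(\ZZ/2)^3$-grading depends only on $H_0$, \cite[Theorem 4.19]{EKmon}), or $t\notin H_0$, whence $\Gamma\cong\Gamma^\cC\bigl(G,\langle H_0,t\rangle,H_0\bigr)$. Suppose instead no homogeneous element of $\Gamma$ has nonzero norm; then $\cC_e=0$ (over the algebraically closed $\FF$, a nonzero space carrying a nondegenerate form has nonisotropic vectors), and likewise $\cC_g=0$ for every $g$ of order $\leq 2$ (the form would otherwise be nondegenerate on $\cC_g$). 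A fine refinement $\bar\Gamma$ of $\Gamma$ has diagonal group scheme a maximal diagonalizable subgroup scheme of $\Spins(\cC_0,-\nup)$, hence is equivalent to one of the three fine gradings; it cannot be the $(\ZZ/2)^3$-grading (a coarsening of which has $1$ in its $e$-component), nor the shifted $(\ZZ/2)^3$-grading (a coarsening of which has all its homogeneous degrees of order $\leq 2$, contradicting $\cC_g=0$ for such $g$ together with $\Gamma$ being nontrivial). Hence $\bar\Gamma$ is the Cartan grading and $\Gamma\cong\Gamma^\cC(G,\alpha)$.

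\emph{Separation and classification within families.} The set $\supp\Gamma$ is an isomorphism invariant, and $\supp\Gamma^\cC(G,H)=H\ni e$ while $\supp\Gamma^\cC(G,H,K)=H\setminus K\not\ni e$; moreover from the eight-element set $H\setminus K$ one recovers $K$ (generated by the pairwise products) and $H=K\cup(H\setminus K)$, so within family (ii) resp.\ (iii) the support determines $H$ resp.\ $(H,K)$, and conversely $\Gamma^\cC(G,H)$ depends only on $H$ by \cite[Theorem 4.19]{EKmon} and $\Gamma^\cC(G,H,K)$ only on $(H,K)$ by Example \ref{ex:gGHK}. A grading in family (ii) or (iii) is fine, so if it were isomorphic to some $\Gamma^\cC(G,\alpha)$ the latter would be fine too; but in a fine $\Gamma^\cC(G,\alpha)$ the element $1=e_1+e_2$ is not homogeneous, its summands lying in the components of the mutually inverse, non-$2$-torsion degrees $\pm\alpha(\epsilon_1+\epsilon_2+\epsilon_3)$, whereas in $\Gamma^\cC(G,H)$ and $\Gamma^\cC(G,H,K)$ it is homogeneous — a contradiction, which separates family (i) from (ii) and (iii). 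Finally, two coarsenings $\Gamma^\cC(G,\alpha),\Gamma^\cC(G,\alpha')$ of the Cartan grading, whose diagonal group schemes both lie in the maximal torus $\Diags(\Gamma^\cC\subcartan)$, are isomorphic precisely when $\alpha$ and $\alpha'$ differ by an element of the Weyl group $W(\Gamma^\cC\subcartan)$ acting on $\ZZ^3$, by the general correspondence between coarsenings of a fine grading with toral diagonal group scheme and homomorphisms out of its universal group (see \cite{EKmon}).

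\emph{Main obstacle.} The delicate point is the reduction in the second paragraph: after shifting back, one must show that proper coarsenings of the $(\ZZ/2)^3$-grading are absorbed into family (i) while the fine $(\ZZ/2)^3$-grading and its nontrivial shifts account for families (ii) and (iii) — i.e.\ one needs the precise form of \cite[Theorem 4.21]{EKmon} and careful bookkeeping of which shift of which octonionic grading lands in which family. A cleaner but more technical alternative is to compose $\rho_\Gamma$ with the vector representation $\Spins(\cC_0,-\nup)\to\Os^+(\cC_0,-\nup)$, classify the resulting $\nup$-compatible grading on the $7$-dimensional space $\cC_0$ exactly as in Theorem \ref{th:grn-1}, and then analyse the lift back to $\Spins(\cC_0,-\nup)$, whose only ambiguity is a character into the center $\bmu_2$ and hence, on $\cC$, a shift by an order-two element.
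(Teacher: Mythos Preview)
Your overall architecture differs from the paper's, and most of it works, but your case~(2) contains a genuine gap.

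\textbf{Comparison.} The paper's dichotomy is the mirror image of yours: it asks whether there exists a nonzero \emph{isotropic} homogeneous element. If yes, it uses Lemma~\ref{le:orbits_83}(iii) --- the orbit of $(e_1,e_2)$ --- to conjugate so that $e_1,e_2$ are homogeneous; then $U=\{e_1\,\cC\,e_2\}$ and $V=\{e_2\,\cC\,e_1\}$ are graded and one reads off a coarsening of $\Gamma^\cC\subcartan$ directly. If no, every homogeneous component is one-dimensional and anisotropic; the identity $\{xxx\}=\nup(x)x$ forces the support to be $2$-elementary, and Lemma~\ref{le:orbits_83}(i) places $1$ in a homogeneous component, giving types~(ii)/(iii). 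You instead first look for an anisotropic homogeneous element, shift so that $1$ is homogeneous, and invoke the classification of gradings on the algebra $\cC$ (\cite[Theorem~4.21]{EKmon}); this is correct and your analysis of the shift-back (the claim $\Gamma^\cC(G,\beta)^{[t]}\cong\Gamma^\cC(G,\beta')$ with $\beta'(\epsilon_i)=t\beta(\epsilon_i)$, and the two sub-cases $t\in H_0$ versus $t\notin H_0$) is sound.

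\textbf{The gap.} Your case~(2) --- where every homogeneous element is isotropic --- is nonempty (the Cartan grading itself lies here), so it must be handled. You argue by taking a fine refinement $\bar\Gamma$ and saying it ``is equivalent to one of the three fine gradings''. But the classification of fine gradings on $(\cC,\{...\})$ is Corollary~\ref{co:finegr_83}, which is deduced \emph{from} the theorem you are proving; invoking it here is circular. (Also, up to equivalence there are two fine gradings, not three: $\Gamma^\cC(G,H)$ and $\Gamma^\cC(G,H,K)$ are both equivalent to $\Gamma^\cC\subCD$.) You never use part~(iii) of Lemma~\ref{le:orbits_83}, and that is exactly the missing ingredient: in your case~(2) pick any nonzero homogeneous $x\in\cC_g$ (necessarily isotropic), choose $y\in\cC_{g^{-1}}$ with $\nup(y)=0$, $\nup(x,y)=1$, apply Lemma~\ref{le:orbits_83}(iii) to assume $(x,y)=(e_1,e_2)$, observe that $U=\{e_1\,\cC\,e_2\}$ and $V=\{e_2\,\cC\,e_1\}$ are graded, and choose a homogeneous basis of $U$ to exhibit $\Gamma$ as a coarsening of $\Gamma^\cC\subcartan$. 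With this replacement your proof goes through; alternatively, once you see this argument you may prefer to reorganize along the paper's dichotomy, which avoids the shift bookkeeping entirely.

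Your separation of the three families and the isomorphism criteria within each are correct, though for (i) versus (ii)/(iii) the paper's one-line observation --- type~(i) always has the isotropic homogeneous element $e_1$, while types~(ii)/(iii) have only anisotropic homogeneous elements --- is quicker than your fineness argument.
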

\begin{proof}
Let $\Gamma:\cC=\bigoplus_{g\in G}\cC_g$ be a grading by the abelian group $G$ of $(\cC,\{...\})$. 

Assume first that there is a nonzero homogeneous element 
$x\in\cC_g$ with $\nup(x)=0$. As $\AAut(\cC,\{...\})\subseteq \Os^+(\cC,\nup)$, $\nup(\cC_{g_1},\cC_{g_2})=0$ unless $g_1g_2=e$, 
so there is an element $y\in\cC_{g^{-1}}$ such that $\nup(x,y)=1$ and $\nup(y)=0$. By Lemma \ref{le:orbits_83}(iii), we may assume that $e_1$ and $e_2$ are homogeneous, and $\deg(e_1)=g$, $\deg(e_2)=g^{-1}$. But then 
$\{e_1\cC e_2\}=e_1\cC e_2=U=\FF u_1+\FF u_2+\FF u_3$, and 
$\{e_2\cC e_1\}=V=\FF v_1+\FF v_2+\FF v_3$, are graded subspaces 
of $\cC$. In particular, we can take a homogeneous basis $\{u_1',u_2',u_3'\}$ of $U$ and, multiplying by a nonzero scalar $u_3'$
if needed, with $\nup(u_1',u_2'u_3')=1$. There is then an automorphism of $\cC$ that fixes $e_1$ and $e_2$ and takes $u_i$ 
to $u_i'$ for $i=1,2,3$, so we may assume that the $u_i$'s are homogeneous too, and so are the $v_i$'s (for instance, 
$v_3=\{u_1e_1u_2\}$). Therefore, $\Gamma$ is a coarsening of the Cartan grading, and hence isomorphic to $\Gamma^\cC(G,\alpha)$ for 
a group homomorphism $\alpha:\ZZ^3\rightarrow G$.

Otherwise, all homogeneous components are one-dimensional and not
isotropic. As $\{x,x,x\}=\nup(x)x$ for all $x\in\cC$, we conclude that the support of $\Gamma$ generates a $2$-elementary abelian subgroup. There are two possibilities:
\begin{itemize}
\item If the neutral element $e$ of $G$ is in the support, by Lemma 
\ref{le:orbits_83}(i) we may assume that the unity $1$ of $\cC$ is 
homogeneous of degree $e$: $1\in\cC_e$. But then, as $xy=\{x,1,y\}$, 
it follows that 
$\Gamma$ is a $G$-grading of $\cC$, with one-dimensional 
nonisotropic homogeneous components. This gives the second 
possibility (see \cite{EldOctonions} or \cite[Theorem 4.21]{EKmon}).

\item Otherwise, again by Lemma \ref{le:orbits_83}(i) we may assume 
that the unity $1$ of $\cC$ is homogeneous: $1\in\cC_g$, for an order 
$2$ element $g\in G$. Then in the shift $\Gamma^{[g]}$, $1$ is 
homogeneous of degree $g^2=e$, and we are in the situation of the 
previous item. 
If $K$ is the support of $\Gamma^{[g]}$, then the subgroup generated by $g$ and $K$ is $2$-elementary of rank $4$, and 
$\Gamma=\left(\Gamma^{[g]}\right)^{[g]}$ is, up to isomorphism, the grading $\Gamma^\cC(G,H,K)$.
\end{itemize}

Now it is clear that gradings in different items are not isomorphic. The support of $\Gamma^\cC(G,H)$ is $H$, and of $\Gamma^\cC(G,H,K)$ is $H\setminus K$. The isomorphism condition for gradings of type
$\Gamma^\cC(G,\alpha)$
 follows from \cite[Proposition 4.22]{EKmon}.
\end{proof}

The homogeneous components of the gradings $\Gamma^\cC(G,H)$ and $\Gamma^\cC(G,H,K)$ in Theorem \ref{th:gr_83} are the
 subspaces spanned by the elements of a CD-basis (Equation
 \eqref{eq:CDbasis}, and hence they are all equivalent to the grading 
$\Gamma^\cC\subCD$ over the grading group $\left(\ZZ/2\right)^4$ with
\[
\begin{aligned}
\deg(1)&=(\bar 1,\bar 0,\bar 0,\bar 0),&
\deg(w_1)&=(\bar 1,\bar 1,\bar 0,\bar 0),\\
\deg(w_2)&=(\bar 1,\bar 0,\bar 1,\bar 0),&
\deg(w_3)&=(\bar 1,\bar 0,\bar 0,\bar 1).
\end{aligned}
\]
(All the other homogeneous components are  determined from these ones.)

\begin{corollary}\label{co:finegr_83}
Up to equivalence, the only fine gradings of $(\cC,\{...\})$ are 
$\Gamma^\cC\subcartan$ and $\Gamma^\cC\subCD$, with universal
groups $\ZZ^3$ and $\left(\ZZ/2\right)^4$.
\end{corollary}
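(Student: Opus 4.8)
The plan is to read off the fine gradings directly from the classification up to isomorphism in Theorem~\ref{th:gr_83}, using that a grading all of whose homogeneous components are one-dimensional admits no proper refinement and is therefore fine. First I would check the two candidates. In $\Gamma^\cC\subcartan$ all eight homogeneous components have dimension one (Example~\ref{ex:Cartan}), so it is fine, with universal group $\ZZ^3$ as noted there. For $\Gamma^\cC\subCD$ the homogeneous components are the one-dimensional spaces $\FF 1,\FF w_1,\dots,\FF w_7$ attached to a CD-basis, so it is fine too; that its universal group is $\left(\ZZ/2\right)^4$ follows from the relations among degrees forced by the identities $\{w_iw_iw_j\}=w_j$ and $\{w_iw_jw_k\}=\pm 1$ (equivalently, from the classification of gradings on the octonion algebra underlying it in \cite[\S 4.5]{EKmon}, together with the fact that the triple product $\{xyz\}=(x\bar y)z$ recovers the norm, cf.\ \eqref{eq:3Cnorm}).

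Next I would show that every fine grading is equivalent to one of these two. Let $\Gamma$ be a fine grading of $(\cC,\{...\})$. By Theorem~\ref{th:gr_83} it is isomorphic, hence equivalent, to $\Gamma^\cC(G,\alpha)$, to $\Gamma^\cC(G,H)$, or to $\Gamma^\cC(G,H,K)$. In the last two cases the homogeneous components are precisely the one-dimensional subspaces spanned by the elements of a CD-basis, so, as noted in the paragraph preceding the corollary, $\Gamma$ is equivalent to $\Gamma^\cC\subCD$. In the first case $\Gamma^\cC(G,\alpha)$ is by construction a coarsening of $\Gamma^\cC\subcartan$ through $\alpha$; if $\alpha$ were not injective on $\supp\Gamma^\cC\subcartan$, then $\Gamma^\cC\subcartan$ would be a proper refinement of $\Gamma^\cC(G,\alpha)$, contradicting fineness. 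Hence $\alpha$ is injective there, $\Gamma^\cC(G,\alpha)$ and $\Gamma^\cC\subcartan$ induce the very same decomposition of $\cC$ into subspaces, and they are equivalent via the identity automorphism.

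Finally I would check that $\Gamma^\cC\subcartan$ and $\Gamma^\cC\subCD$ are not equivalent to each other. Since both are fine, their diagonal group schemes are maximal diagonalizable subgroup schemes of $\AAut(\cC,\{...\})$, and two fine gradings are equivalent exactly when their diagonal group schemes are conjugate by an element of $\Aut(\cC,\{...\})$; conjugate diagonalizable group schemes are isomorphic, so equivalent fine gradings have isomorphic universal groups. But $\Diags(\Gamma^\cC\subcartan)=\mathbf{T}$ is a three-dimensional torus, with universal group $\ZZ^3$, whereas $\Diags(\Gamma^\cC\subCD)$ is a finite group scheme, with universal group $\left(\ZZ/2\right)^4$, and $\ZZ^3\not\cong\left(\ZZ/2\right)^4$. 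Therefore these are two genuinely distinct fine gradings, and by the previous paragraph they are the only ones up to equivalence, which proves the corollary.

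The argument is essentially bookkeeping on top of Theorem~\ref{th:gr_83}, so no step is deep; the one point requiring care is the passage, within the family $\Gamma^\cC(G,\alpha)$, from ``fine'' to ``equivalent to the Cartan grading'', where one must exploit that $\Gamma^\cC\subcartan$ is a common refinement of every member of the family and that fineness forbids proper refinements. The only genuine computation is the identification of the universal group $\left(\ZZ/2\right)^4$ of $\Gamma^\cC\subCD$, and even this is needed only to separate it from $\Gamma^\cC\subcartan$ and could be replaced by the coarser observation that one diagonal group scheme is a torus while the other is finite.
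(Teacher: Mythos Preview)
Your proposal is correct and follows exactly the approach the paper intends: the corollary is stated without proof because it is meant to be read off from Theorem~\ref{th:gr_83} together with the paragraph immediately preceding the corollary, which already observes that every grading of type $\Gamma^\cC(G,H)$ or $\Gamma^\cC(G,H,K)$ is equivalent to $\Gamma^\cC\subCD$. Your handling of the remaining case $\Gamma^\cC(G,\alpha)$ and the inequivalence of the two fine gradings via their universal groups is the natural completion of that argument.
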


Any element of the Weyl group of $\Gamma^\cC\subCD$ permutes 
its support and gives an automorphism of the universal grading group.
 Consider $\ZZ/2$ as the field $\FF_2$ of elements, then
 $W(\Gamma^\cC\subCD)$ embeds in 
$\{\gamma\in\GL(\FF_2^4)\mid \varphi(1\times\FF_2^3)
\subseteq 1\times\FF_2^3\}$, which is identified with the affine 
group $\Aff(3,\FF_2)$. Lemma \ref{le:orbits_83}(i) shows that 
$W(\Gamma^\cC\subCD)$ acts transitively on the support. Also, 
the grading $\Gamma^\cC\subCD$ is equivalent to the fine 
$\bigl(\ZZ/2\bigr)^{\hspace*{-2pt}3}$-grading on $\cC$, with Weyl group 
$\GL_3(\FF_2)$. As $\Aut(\cC)$ is a subgroup of $\Aut(\cC,\{...\})$, 
it follows that $\GL_3(\FF_2)$ is contained in 
$W(\Gamma^\cC\subCD)\leq \Aff(3,\FF_2)$. It turns out that the 
Weyl group $W(\Gamma^\cC\subCD)$ is the whole affine group 
$\Aff(3,\FF_2)$.

The corollary above and the computation of the Weyl groups 
have been considered independently in \cite{KantorSystems}, 
devoted to the classification of the
fine gradings on certain Kantor systems attached to Hurwitz algebras. 
One such triple system corresponds to our
$3$-fold cross product $(\cC,X_1^\cC)$.

\begin{corollary}\label{co:quasitori_Spin7}
Let $\mathbf{Q}$ be a quasitorus (i.e., diagonalizable) subgroup scheme of $\Spins(\cC_0,-\nup)$. Then either:
\begin{itemize}
\item $\mathbf{Q}$ is contained in a maximal torus, and hence conjugate to $\Diags(\Gamma^\cC\subcartan)$, or
\item $\mathbf{Q}$ is conjugate to $\Diags(\Gamma^\cC\subCD)$, which is isomorphic to $\bmu_2^4$. 
\end{itemize}
\end{corollary}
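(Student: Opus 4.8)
The plan is to read the statement off the classification of gradings on the triple system $(\cC,\{...\})$ obtained in Theorem \ref{th:gr_83} and Corollary \ref{co:finegr_83}, via the standard dictionary between diagonalizable subgroup schemes and gradings. By Theorem \ref{th:autos_83}, $\Spins(\cC_0,-\nup)\cong\AAut(\cC,\{...\})$, so we may argue inside $\AAut(\cC,\{...\})$. First I would recall (see \cite[Chapter~1]{EKmon}) that a diagonalizable subgroup scheme $\mathbf{Q}\leq\AAut(\cC,\{...\})$ produces, through its weight space decomposition, a grading $\Gamma_{\mathbf{Q}}$ of $(\cC,\{...\})$ by the character group $\wh{\mathbf{Q}}$, that $\mathbf{Q}\leq\Diags(\Gamma_{\mathbf{Q}})$, and that if $\varphi\in\Aut(\cC,\{...\})$ carries $\Gamma_{\mathbf{Q}}$ onto a grading $\Gamma'$ then $\varphi\mathbf{Q}\varphi^{-1}\leq\Diags(\Gamma')$. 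Hence it is enough to locate $\Gamma_{\mathbf{Q}}$ in Theorem \ref{th:gr_83} and to identify the group schemes $\Diags(\,\cdot\,)$ that arise.

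Next I would apply Theorem \ref{th:gr_83} with $G=\wh{\mathbf{Q}}$: the grading $\Gamma_{\mathbf{Q}}$ is isomorphic, through some $\varphi\in\Aut(\cC,\{...\})$, to one of $\Gamma^\cC(G,\alpha)$, $\Gamma^\cC(G,H)$, or $\Gamma^\cC(G,H,K)$. Suppose first it is isomorphic to a grading $\Gamma^\cC(G,\alpha)$ of the first type. Since $\Gamma^\cC(G,\alpha)$ is, by construction, a coarsening of the Cartan grading $\Gamma^\cC\subcartan$, every automorphism acting by a scalar on each homogeneous component of $\Gamma^\cC(G,\alpha)$ acts by a scalar on each homogeneous component of $\Gamma^\cC\subcartan$; hence $\Diags\bigl(\Gamma^\cC(G,\alpha)\bigr)\leq\Diags(\Gamma^\cC\subcartan)=\mathbf{T}$, the maximal torus of Example \ref{ex:Cartan}. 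Therefore $\varphi\mathbf{Q}\varphi^{-1}\leq\mathbf{T}$, so $\mathbf{Q}$ is contained in a maximal torus, which — all maximal tori being conjugate — is conjugate to $\Diags(\Gamma^\cC\subcartan)$. This is the first alternative.

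Suppose instead $\Gamma_{\mathbf{Q}}$ is isomorphic to a grading of the second or third type. By the discussion following Theorem \ref{th:gr_83}, the homogeneous components of $\Gamma^\cC(G,H)$ and of $\Gamma^\cC(G,H,K)$ are precisely the spans of the elements of a Cayley--Dickson basis, so $\Gamma_{\mathbf{Q}}$ is equivalent to $\Gamma^\cC\subCD$; in particular its universal grading group is $\left(\ZZ/2\right)^4$ by Corollary \ref{co:finegr_83}, whence $\Diags(\Gamma_{\mathbf{Q}})\cong\left((\ZZ/2)^4\right)\subdiag\cong\bmu_2^4$, and this subgroup scheme is conjugate to $\Diags(\Gamma^\cC\subCD)$ since equivalent gradings have conjugate diagonal group schemes. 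Consequently $\mathbf{Q}\leq\Diags(\Gamma_{\mathbf{Q}})$ is conjugate into $\Diags(\Gamma^\cC\subCD)\cong\bmu_2^4$, and $\mathbf{Q}$ equals such a conjugate of $\Diags(\Gamma^\cC\subCD)$ precisely when it is a maximal quasitorus; this is the second alternative. The point I expect to need the most care is the opening dictionary — checking that $\mathbf{Q}$ really embeds into $\Diags(\Gamma_{\mathbf{Q}})$ and that conjugacy of quasitori corresponds to equivalence of the associated gradings, so that ``conjugate into $\Diags(\Gamma)$'' may be identified with being contained in a maximal torus (resp.\ in a copy of $\bmu_2^4$) — but this is routine for affine group schemes over a field, so the corollary is essentially a repackaging of Theorem \ref{th:gr_83} together with Corollary \ref{co:finegr_83}.
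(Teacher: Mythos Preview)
Your proposal is correct and follows precisely the approach the paper intends: the corollary is stated without proof as an immediate consequence of Theorem~\ref{th:gr_83} and Corollary~\ref{co:finegr_83} via the standard dictionary between diagonalizable subgroup schemes and gradings, and your argument spells out exactly that translation. Your care in distinguishing ``conjugate into $\Diags(\Gamma^\cC\subCD)$'' from ``equal to a conjugate of $\Diags(\Gamma^\cC\subCD)$'' is appropriate, since the second bullet of the corollary is meant to describe the maximal quasitorus containing $\mathbf{Q}$ (parallel to the phrasing of the first bullet) rather than $\mathbf{Q}$ itself.
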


\begin{remark}\label{re:Spin7SO7}
The group scheme $\Spins(\cC_0,-\nup)$ is the simply connected group 
of type $B_3$. In contrast, the corresponding adjoint group, that is, the 
special orthogonal group scheme
$\Os^+(\cC_0,-\nup)$, contains four
maximal quasitori, up to conjugation, which are isomorphic to 
$\Gsm^3$ (a maximal torus), $\Gsm^2\times\bmu_2^2$, $\Gsm\times\bmu_2^4$, and 
$\bmu_2^6$ (see \cite[Theorem 3.67]{EKmon}).
\end{remark}

\bigskip\bigskip

%-------------------------------------------

%\newpage\null

\end{document}